\def\calf{{\cal F}}
\def\<{\langle}
\def\>{\rangle}
\def\eps{\varepsilon}
\def\RR{\mathbb{R}}
\newcommand\const{\operatorname{const}}
\newcommand\Div{\operatorname{div}}
\newcommand\id{\operatorname{id}}
\newcommand\im{\operatorname{im}}
\def\eq{\hspace*{-.8mm}&=&\hspace*{-.8mm}}
\def\plus{\hspace*{-.8mm}&+&\hspace*{-.8mm}}
\def\minus{\hspace*{-.8mm}&-&\hspace*{-.8mm}}
\newcommand{\gv}{\mathrm{gv}}
\newtheorem{cor}{Corollary}
\newtheorem{definition}{Definition}
\newtheorem{example}{Example}
\newtheorem{remark}{Remark}
\newtheorem{lem}{Lemma}
\newtheorem{prop}{Proposition}
\newtheorem{thm}{Theorem}
\author{Vladimir Rovenski\footnote{Mathematical Department, University of Haifa, Mount Carmel, 31905 Haifa,  Israel
        \newline e-mail: {\tt vrovenski@univ.haifa.ac.il}        }
        \ and \
        Pawe\l \  Walczak\footnote{Katedra Geometrii,
        Uniwersytet \L\'{o}dzki, ul. Banacha 22,
             90-238  \L\'{o}d\'{z}, Poland
        \newline e-mail: {\tt pawelwal@math.uni.lodz.pl}
}
}
\title{A Godbillon-Vey type invariant \\ for a 3-dimensional manifold with a~plane field}
\begin{document}

\date{}

\maketitle

\begin{abstract}
We consider a 3-dimensional smooth manifold $M$ equipped with an arbitrary,
\textit{a~priori} non-integrable, distribution (plane field) ${\cal D}$ and a vector field $T$ transverse to ${\cal D}$.
Using a 1-form $\omega$ such that ${\cal D} = \ker\,\omega$
and $\omega(T)=1$ we construct a 3-form analogous to that defining the Godbillon-Vey class of a foliation,
and show how does this form depend on $\omega$ and~$T$.
For a compatible Riemannian metric on $M$, we express this 3-form in terms of the curvature and torsion of normal curves
and the non-symmetric second fundamental form of ${\cal D}$.
We deduce Euler-Lagrange equations of associated functionals: for variable $({\cal D},T)$ on $M$,
and for variable Riemannian or Randers metric on $(M,{\cal D})$.
We show that for a geodesic field $T$ (e.g., for a contact structure) such $({\cal D},T)$ is critical,
characterize critical pairs when ${\cal D}$ is integrable, and prove that these critical pairs
are not extrema.

\vskip1.5mm\noindent
\textbf{Keywords}:
distribution, Godbillon-Vey invariant, variation, metric, second fundamental form,  curvature, Euler-Lagrange equation, geodesic field,
contact structure, twisted product

\vskip1.5mm
\noindent
\textbf{Mathematics Subject Classifications (2010)} Primary 53C12; Secondary 53C21
\end{abstract}

\section*{Introduction}
%\label{sec:intro}

The Godbillon-Vey cohomology class $\gv(\calf)$ of a transversely oriented codimension-one foliation
$\calf$ of a compact manifold $M$ has been defined in \cite{gv} as the de Rham cohomology class of the 3-form
$\eta\wedge d\eta$, where $\eta$ is a 1-form satisfying $d\omega =\omega\wedge\eta$,
$\omega$ being a 1-form defining the tangent bundle (distribution $T\calf = \ker \omega$) of $\calf$.
If $\dim M = 3$ then $\gv(\calf)$ provides a number:
\begin{equation}\label{E-gv-invar0}
 \gv(\calf)=\int_M \eta\wedge d\eta.
\end{equation}
Integrability of $T\calf$ implies the existence of such $\eta$  while non-complicated calculations show that
 $\gv(\calf)$ is well defined, that is it does not depend on possible choices of the forms $\omega$ and~$\eta$.
The~Godbil\-lon-Vey class plays a crucial role in the study of  topology and dynamics of
foliations and, still, is of some interest among ``foliators'', see
e.g. \cite{cc,glw,hu2002,hl,wa} and \cite[Problem~10]{hu2005}.

Now, let $g$ be a Riemannian metric on $M$ ($\dim M = 3$), $\nabla$ its Levi-Civita connection,
$T$ the positive oriented  unit vector field on $M$ normal to $\calf$ and $h$ the scalar second fundamental form.
Denote by $k$ the function on $M$ such that $k(x)$ is the curvature of the $T$-curve at $x\in M$
and assume that $k\ne0$ on an open set $U$ of~$M$.
Thus, the unit normal $N$, the binormal $B=T\times N$ and the \textit{torsion} $\tau$ of $T$-curves are defined on $U$.
Then
\[
 \eta\wedge d\eta=k^2(\tau - h_{B,N})\,{\rm d}V_g\ \ {\rm on}\ \ U\ \ {\rm and}\ \ \eta\wedge d\eta=0\ \ {\rm when}\ \ k=0;
\]
hence, the~following formula holds \cite{rw73}:
\begin{equation}\label{eq:rwood0}
 \gv(\calf) = \int_M k^2(\tau - h_{B,N})\,{\rm d}V_g.
\end{equation}
%which has been obtained for foliations in \cite{rw73} (with opposite sign because of our choice for $\gv({\cal D}, T)$).

In this paper, we consider a 3-dimensional compact manifold $M$ equipped with an arbitrary,
\textit{a~priori} non-integrable, distribution (plane field) ${\cal D}$ and a vector field $T$ transverse to ${\cal D}$.
Non-integrable distributions appear in many situations, e.g. on contact manifolds and in sub-Riemannian geometry.
Using a 1-form $\omega$ such that ${\cal D} = \ker\,\omega$ and $\omega(T)=1$,
we construct a 3-form analogous to that defining $\gv(\calf)$ in \eqref{E-gv-invar0}.
In a sense, our form arises from the best approximation of $d\omega$ by the wedge-product of $\omega$ by a 1-form.
We~show how does this form depend on $\omega$ and~$T$.
On a Riemannian manifold, this form is expressed in terms of the curvature and
torsion of normal curves and the non-symmetric second fundamental form of ${\cal D}$, see \eqref{eq:rwood0} for foliations.
We provide also variational formulas related to our construction and then deduce Euler-Lagrange equations of associated functional:
for variable pair $({\cal D},T)$ on $M$ (with fixed adapted metric),
and for variable Riemannian (or Randers) metric $g$ on $(M,{\cal D})$.
 We characterize critical pairs when ${\cal D}$ is integrable,
show that for a geodesic field $T$ (e.g. contact structure) such $({\cal D},T)$ is critical;
and prove that these critical pairs have saddle type.
In the last section we discuss Godbillon-Vey invariants on a manifold of dimension $2n+1\ge5$.

\section{Construction}
\label{sec:1-1}

Let ${\cal D}$ be a transversely oriented codimension one distribution on a 3-dimensional smooth compact manifold $M$,
and $\omega$ a 1-form such that ${\cal D} = \ker\,\omega$. Let $T$ be a vector field on $M$ such that
\[
 \omega(T)=1.
\]
For any 1-form $\alpha$ and vector fields $X,Y\in\mathfrak{X}_M$ on $M$ we use the formulae
\[
 d\alpha(X,Y) = X(\alpha (Y)) -Y(\alpha (X))-\alpha ([X,Y]),\quad
 \iota_{\,Z}\,d\omega = d\omega(Z,\,\cdot),\quad {\cal L}_{\,Z}\alpha = (d\iota_Z + \iota_Zd)\alpha .
\]
We also will apply the inner product $\iota_{\,Z}$ and the Lie derivative ${\cal L}_{\,Z}$ to other differential forms.

\begin{definition}\rm
A Riemannian metric $g$ on $M$ is \textit{compatible} with a pair $({\cal D},T)$ if $T$ is the unit normal to~${\cal D}$,
i.e. $\omega(X) = g(T, X)$ for all $X\in\mathfrak{X}_M$.
Denote by ${\rm Riem}(M,{\cal D},T)$ the space of all such metrics.
\end{definition}

 The ``musical" isomorphisms $\sharp$ and $\flat$
(called and denoted so because, in traditional notation, they ``lower'' and ``raise'' indices)
will be used for rank one tensors, e.g. $\omega(X)=g(\omega^\sharp,X)=X^\flat(\omega^\sharp)$.

 Given $g\in{\rm Riem}(M,{\cal D},T)$, consider in the space $\Lambda^1(M)$ of 1-forms on $M$ the subspace
$\omega^\perp$ ortho\-gonal to the line spanned by $\omega$.
Consider also, in the space $\Lambda^2 (M)$ of 2-forms, the subspace $\omega\wedge\omega^\perp$
of all the 2-forms $\omega\wedge\theta$, $\theta$ being a 1-form of $\omega^\perp$.
 Now, project $d\omega$ orthogonally onto the subspace $\omega\wedge\omega^\perp$.
The projection has the form $\omega\wedge\eta$, $\eta$ belonging to $\omega^\perp$. Such $\eta$ is unique.

\begin{lem}\label{L-eta}
The 1-form $\eta$ does not depend on a compatible metric $g$, and we have
\begin{equation}\label{E-eta}
 \eta = \iota_{\,T}\,d\omega,
 \quad{\rm equivalently,}\quad \eta = {\cal L}_{\,T}\,\omega.
\end{equation}
\end{lem}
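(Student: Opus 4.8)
The plan is to verify the closed-form expression $\eta={\cal L}_T\,\omega=\iota_{\,T}\,d\omega$ directly, and then to observe that this right-hand side contains no reference to $g$, which yields the claimed metric independence for free.

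First I would dispose of the equivalence of the two expressions in \eqref{E-eta}: by Cartan's formula, ${\cal L}_{\,T}\,\omega=d(\iota_{\,T}\omega)+\iota_{\,T}\,d\omega=d(\omega(T))+\iota_{\,T}\,d\omega$, and since $\omega(T)\equiv1$ is constant we get $d(\omega(T))=0$, hence ${\cal L}_{\,T}\,\omega=\iota_{\,T}\,d\omega$. Thus it remains to show that the $g$-orthogonal projection of $d\omega$ onto $\omega\wedge\omega^\perp$ equals $\omega\wedge\iota_{\,T}\,d\omega$.

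Fix $g\in{\rm Riem}(M,{\cal D},T)$. Then $\omega=T^\flat$ and $|T|_g=\omega(T)=1$, so locally one may choose an orthonormal coframe $(e^1=\omega,e^2,e^3)$ with dual frame $(e_1=T,e_2,e_3)$, where $e_2,e_3$ span ${\cal D}$. Here $\omega^\perp=\operatorname{span}(e^2,e^3)$ and $\omega\wedge\omega^\perp=\operatorname{span}(e^1\wedge e^2,\,e^1\wedge e^3)$; since $\Lambda^2(M)$ has rank $3$, the $g$-orthogonal complement of $\omega\wedge\omega^\perp$ in $\Lambda^2(M)$ is the line spanned by $e^2\wedge e^3$ (the area form of ${\cal D}$), which admits the frame-free description as the set of $2$-forms $\beta$ with $\iota_{\,T}\beta=0$ (indeed $\iota_{\,e_1}(e^1\wedge e^j)=e^j\ne0$ for $j=2,3$, while $\iota_{\,e_1}(e^2\wedge e^3)=0$).

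Now I would check two facts. First, $\iota_{\,T}\,d\omega\in\omega^\perp$, because $(\iota_{\,T}\,d\omega)(T)=d\omega(T,T)=0$ and $\omega^\sharp=T$. Second, $d\omega-\omega\wedge\iota_{\,T}\,d\omega$ lies in the orthogonal complement of $\omega\wedge\omega^\perp$: from $\iota_{\,T}(\omega\wedge\iota_{\,T}\,d\omega)=\omega(T)\,\iota_{\,T}\,d\omega-(\iota_{\,T}\,d\omega)(T)\,\omega=\iota_{\,T}\,d\omega$ we get $\iota_{\,T}\bigl(d\omega-\omega\wedge\iota_{\,T}\,d\omega\bigr)=0$, so the characterization above applies. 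Hence $\omega\wedge\iota_{\,T}\,d\omega$ is precisely the orthogonal projection of $d\omega$ onto $\omega\wedge\omega^\perp$; since $\theta\mapsto\omega\wedge\theta$ is injective on $\omega^\perp$ (as $\omega\ne0$ at each point), the $1$-form $\eta\in\omega^\perp$ of the lemma is unique and must equal $\iota_{\,T}\,d\omega$. Because this expression is manifestly independent of $g$, so is $\eta$. The only mildly delicate point is the frame-free identification of the complement of $\omega\wedge\omega^\perp$ with $\{\beta\in\Lambda^2(M):\iota_{\,T}\beta=0\}$, which is what makes the projection computation conclusive; the remaining steps are routine manipulations with interior products.
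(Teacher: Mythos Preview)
Your proof is correct and follows essentially the same route as the paper's: both hinge on the observation that the $g$-orthogonal complement of $\omega\wedge\omega^\perp$ in $\Lambda^2(M)$ is $\{\beta:\iota_{\,T}\beta=0\}$, so that $\eta\perp\omega$ amounts to $\eta(T)=0$ and $d\omega-\omega\wedge\eta\perp\omega\wedge\omega^\perp$ amounts to $\iota_{\,T}(d\omega-\omega\wedge\eta)=0$, i.e.\ $\eta=\iota_{\,T}\,d\omega$. Your version is simply more explicit (introducing a local orthonormal coframe and spelling out the Cartan-formula step for ${\cal L}_{\,T}\,\omega=\iota_{\,T}\,d\omega$), whereas the paper compresses all of this into two lines.
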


\begin{proof} Let $g\in{\rm Riem}(M,{\cal D},T)$. The property $\eta\,\perp\,\omega$ means $\eta(\omega)=0$.
The property $d\omega-\omega\wedge\eta\ \perp\ \omega\wedge \omega^\perp$ means $(d\omega -\omega\wedge\eta)(T,\,\cdot)=0$,
that is $\iota_{\,T}\,d\omega-\eta=0$.
Note also that $d\eta = {\cal L}_{\,T}(d\omega)$.
\end{proof}

The 3-form $\eta\wedge d\eta$ defines the de Rham cohomology class
and represents the \textit{Godbillon-Vey type invariant} of a pair $({\cal D},T)$:
\begin{equation}\label{E-gv-invar}
 \gv: ({\cal D}, T) \to \int_M \eta\wedge d\eta.
\end{equation}
%Certainly, this class depends on the choice of the Riemannian metric $g$, but we have the following.

%For the definitions of contact structures, Reeb fields and adapted metrics, see, for example, \cite{b2010}.

\begin{example}\label{Ex-cont}\rm
Let $T$ be the Reeb field of a contact distribution ${\cal D}=\ker\omega$, then $\omega(T)=1$ and
$\eta:=\iota_{\,T}\,d\omega$ vanishes, see definitions in \cite{b2010}.
\end{example}

Next proposition shows how does $\gv({\cal D}, T)$ change when
$T$ and $\omega$ vary into $\tilde T$ and $\tilde\omega$, while condition $\omega(T)=1$ is preserved.
Using the fact that $\tilde T -T$ can be uniquely decomposed into $T$- and ${\cal D}$-components,
we have three different cases for
%a pair
$(\tilde T,\tilde\omega)$, and ${\cal D}$ is preserved in cases (i) and~(ii):

\smallskip
(i)~$\tilde T$ is parallel to $T$ and $\tilde\omega$ is parallel to $\omega$,

\smallskip
(ii)~$\tilde T - T$ belongs to $\mathfrak{X}_{\cal D}$ (hence $\omega(\tilde T)=1$)
and $\tilde\omega=\omega$,

\smallskip
(iii)~$\tilde T = T$ and $\tilde\omega = \omega +\mu$ for some 1-form $\mu$ such that $\mu(T)=0$.

\begin{prop}\label{prop1}
Let $(\omega,T)$ and $(\tilde\omega,\tilde T)$ be pairs of smooth one-forms and vector fields on $M$ obeying
\begin{equation}\label{E-wt=1}
 \omega(T)=1=\tilde\omega(\tilde T).
\end{equation}

{\rm (i)} If $\,\tilde T = e^{-f} T$ for a smooth function $f$ on $M$ then $\tilde\omega=e^{f}\omega$
and the corresponding 3-forms $\eta\wedge d\eta$ and $\tilde\eta\wedge d\tilde\eta$ are related by
\begin{equation}\label{eq:rel_forms}
 \tilde\eta\wedge d\tilde\eta = \eta\wedge d\eta + d\alpha + 2\,T(f)\cdot\omega\wedge d\eta + T(f)^2\cdot\omega\wedge d\omega,
\end{equation}
$\alpha=f\,d\eta -f\,d(T(f)\,\omega) +T(f)\,\omega\wedge\eta$ being a 2-form.

{\rm (ii)} If $\,\tilde T=T+X$ for some $X\in\mathfrak{X}_{\cal D}$ and $\tilde\omega=\omega$ then $\tilde\eta=\eta +\iota_X\,d\omega$
and the corresponding 3-forms $\eta\wedge d\eta$ and $\tilde\eta\wedge d\tilde\eta$ are related by
\begin{equation}\label{eq:rel-forms2-gen}
 \tilde\eta\wedge d\tilde\eta
 = \eta\wedge d\eta + d(\eta\wedge\iota_X d\omega) + 2\,d\eta\wedge\iota_X d\omega + \iota_X(d\omega)\wedge d(\iota_X d\omega).
\end{equation}

{\rm (iii)} If $\,\tilde T=T$ and $\tilde\omega=\omega+\mu$ for some 1-form $\mu$ then $\mu(T)=0$,
$\tilde\eta=\eta +\iota_{\,T}\,d\mu$ and the corresponding 3-forms $\eta\wedge d\eta$ and $\tilde\eta\wedge d\tilde\eta$ are related by
\begin{equation}\label{eq:rel-forms3-gen}
 \tilde\eta\wedge d\tilde\eta
% = \eta\wedge d\eta + \eta\wedge d(\iota_T d\mu) + \iota_T(d\mu)\wedge d\eta + \iota_T d\mu\wedge d(\iota_T d\mu).
 = \eta\wedge d\eta + d(\eta\wedge\iota_{\,T}\,d\mu) + 2\,d\eta\wedge \iota_T\,d\mu + \iota_{\,T}\,d\mu\wedge d(\iota_{\,T}\,d\mu).
\end{equation}
\end{prop}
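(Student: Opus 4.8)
The plan is to treat the three cases by the same mechanism: in each case $\tilde\eta = \eta + \beta$ for an explicitly identified 1-form $\beta$ (respectively $\beta = T(f)\,\omega$ after using $\tilde\omega = e^f\omega$, $\beta = \iota_X d\omega$, $\beta = \iota_T d\mu$), and then expand $\tilde\eta\wedge d\tilde\eta = (\eta+\beta)\wedge(d\eta+d\beta)$ algebraically. This gives the universal identity
\[
 \tilde\eta\wedge d\tilde\eta = \eta\wedge d\eta + \eta\wedge d\beta + \beta\wedge d\eta + \beta\wedge d\beta,
\]
and the work is to massage the three cross terms into an exact form plus the terms displayed in \eqref{eq:rel_forms}, \eqref{eq:rel-forms2-gen}, \eqref{eq:rel-forms3-gen}. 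The basic tool for this is the Leibniz rule $d(\gamma_1\wedge\gamma_2) = d\gamma_1\wedge\gamma_2 + (-1)^{\deg\gamma_1}\gamma_1\wedge d\gamma_2$ applied to products like $\eta\wedge\beta$, which lets one trade $\eta\wedge d\beta$ for $d\beta\wedge\eta$-type terms modulo $d(\eta\wedge\beta)$, together with $d\circ d = 0$. I would also record at the outset, from Lemma~\ref{L-eta}, that $\eta = \iota_T d\omega$ and $d\eta = \mathcal L_T(d\omega)$, and the analogous $\tilde\eta = \iota_{\tilde T} d\tilde\omega$; the relations $\tilde\eta = \eta + \iota_X d\omega$ (case ii) and $\tilde\eta = \eta + \iota_T d\mu$ (case iii) then follow from linearity of $\iota$ in the vector argument and of $d$, while $\mu(T) = 0$ is forced by $\tilde\omega(\tilde T) = \omega(T) = 1$ with $\tilde T = T$.

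For case (i) I would first argue that $\tilde T = e^{-f}T$ together with $\tilde\omega(\tilde T)=1$ and $\ker\tilde\omega = \ker\omega = \mathcal D$ forces $\tilde\omega = e^f\omega$. Then $d\tilde\omega = e^f(df\wedge\omega + d\omega)$ and $\tilde\eta = \iota_{\tilde T} d\tilde\omega = e^{-f}\iota_T\big(e^f(df\wedge\omega + d\omega)\big)$; using $\iota_T(df\wedge\omega) = T(f)\,\omega - df$ (since $\omega(T)=1$) and $\iota_T d\omega = \eta$, one gets $\tilde\eta = \eta + T(f)\,\omega - df + (\text{a }df\text{ term})$, which should simplify to $\tilde\eta = \eta + T(f)\,\omega$ modulo an exact 1-form $-df$; but $d(df) = 0$, so the exact part contributes nothing to $\tilde\eta\wedge d\tilde\eta$ beyond an exact 3-form, and effectively $\beta = T(f)\,\omega - df$. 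I would then plug $\beta$ into the universal identity, use $d\beta = d(T(f))\wedge\omega + T(f)\,d\omega$, and collect: the term $\beta\wedge d\beta$ produces the $T(f)^2\,\omega\wedge d\omega$ summand (the $df$ pieces and cross terms either vanish or become exact), while $\eta\wedge d\beta + \beta\wedge d\eta$ yields $2\,T(f)\,\omega\wedge d\eta$ plus $d\alpha$ with $\alpha$ as stated; verifying that the claimed $\alpha = f\,d\eta - f\,d(T(f)\,\omega) + T(f)\,\omega\wedge\eta$ has exterior derivative exactly equal to the leftover is the one genuinely fiddly computation, done by expanding $d\alpha$ with the Leibniz rule and $dd=0$ and matching terms.

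Cases (ii) and (iii) are structurally identical once $\beta$ is named ($\iota_X d\omega$ and $\iota_T d\mu$ respectively), and in fact they are both instances of the same computation with $d\beta$ left symbolic: the universal identity already gives $\eta\wedge d\eta + \eta\wedge d\beta + \beta\wedge d\eta + \beta\wedge d\beta$, and one only needs $\eta\wedge d\beta = d(\eta\wedge\beta) + d\eta\wedge\beta$ (Leibniz, $\deg\eta = 1$) to rewrite $\eta\wedge d\beta + \beta\wedge d\eta = d(\eta\wedge\beta) + 2\,d\eta\wedge\beta$ after noting $d\eta\wedge\beta = \beta\wedge d\eta$ (both are $3$-forms, commuting a $2$-form past a $1$-form). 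Substituting $\beta = \iota_X d\omega$ gives \eqref{eq:rel-forms2-gen} and $\beta = \iota_T d\mu$ gives \eqref{eq:rel-forms3-gen} verbatim. The main obstacle is thus entirely concentrated in case (i): keeping track of the $df$ and $d(T(f))$ contributions and confirming that the nonexact remainder is precisely $2\,T(f)\,\omega\wedge d\eta + T(f)^2\,\omega\wedge d\omega$ with the rest absorbed into $d\alpha$ — a sign-and-bookkeeping exercise rather than a conceptual one.
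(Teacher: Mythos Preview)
Your approach is the same as the paper's: in each case identify $\tilde\eta$ explicitly from $\tilde\eta=\iota_{\tilde T}d\tilde\omega$ (the paper obtains $\tilde\eta=\eta-df+T(f)\,\omega$ in (i) and $\tilde\eta=\eta+\iota_Xd\omega$, $\tilde\eta=\eta+\iota_Td\mu$ in (ii), (iii), exactly as you do), then expand $(\eta+\beta)\wedge d(\eta+\beta)$ and regroup; the paper's proof is in fact terser than yours, simply stating that the formulas follow from the expressions for $\tilde\eta$ and $d\tilde\eta$.

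One correction worth flagging: your Leibniz step ``$\eta\wedge d\beta = d(\eta\wedge\beta)+d\eta\wedge\beta$'' has the wrong sign. For a $1$-form $\eta$ the rule is $d(\eta\wedge\beta)=d\eta\wedge\beta-\eta\wedge d\beta$, so the cross terms actually give
\[
\eta\wedge d\beta+\beta\wedge d\eta \;=\; 2\,d\eta\wedge\beta - d(\eta\wedge\beta)\;=\;2\,d\eta\wedge\beta + d(\beta\wedge\eta).
\]
A quick test (e.g.\ $\eta=x\,dy$, $\beta=dz$ on $\mathbb{R}^3$) confirms this. Thus the exact piece in \eqref{eq:rel-forms2-gen}--\eqref{eq:rel-forms3-gen} should carry the opposite sign (equivalently, read $d(\iota_Xd\omega\wedge\eta)$, $d(\iota_Td\mu\wedge\eta)$); this looks like a typo in the paper that your sign slip happens to reproduce, and in any event it does not affect the integrated invariant or any subsequent argument, since the term is exact.
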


\begin{proof}
(i) In this situation, \eqref{E-wt=1} yields $\tilde\omega=e^{f}\omega$.
By Lemma~\ref{L-eta}, we find
\begin{eqnarray*}
  && \tilde\eta=e^{-f}\iota_T(d e^{f}\omega) = e^{-f}\iota_T( e^{f} df\wedge\omega + e^{f} d\omega)
  = \iota_T( df\wedge\omega + d\omega) = \eta - df + T(f)\,\omega,\\
 && d\tilde\eta = d\eta + d(T(f))\wedge\omega + T(f)\,d\omega.
\end{eqnarray*}
The last two equalities imply (\ref{eq:rel_forms}).

(ii) Certainly, \eqref{E-wt=1} and $\tilde\omega=\omega$ yield $\omega(X)=0$.
Using Lemma~\ref{L-eta} yields $\tilde\eta = \eta +\iota_X(d\omega)$.
From the above \eqref{eq:rel-forms2-gen} follows.

(iii) This situation is dual to (ii), in a sense. Certainly, \eqref{E-wt=1} and $\tilde T= T$ yield $\mu(T)=0$.
Using Lemma~\ref{L-eta} yields $\tilde\eta=\eta+\iota_{\,T}(d\mu)$.
From the above \eqref{eq:rel-forms3-gen} follows.
\end{proof}

\begin{remark}\rm
Formula \eqref{eq:rel-forms3-gen} cannot be reasonably simplified even in the integrable case:
two foliations defined by $\omega$ and $\tilde\omega$ may have different $\gv$-classes and then the forms $\eta\wedge d\eta$
and $\tilde\eta \wedge d\tilde\eta$ differ by a form which is not exact and difficult to express explicitly.
\end{remark}

If ${\cal D}$ is integrable, then $\omega\wedge d\omega=0$, see \cite{cc}, and $d\omega=\omega\wedge\eta$.
Derivation of the last equality yields
\[
 0= d(d\omega) = \eta\wedge d\omega - d\eta\wedge\omega = (\omega\wedge\eta)\wedge\eta - d\eta\wedge\omega = - d\eta\wedge\omega.
\]
Hence, the last two terms in (\ref{eq:rel_forms}) vanish when ${\cal D}$ is integrable, but in any case we have the following.

\begin{cor}
%\label{cor1}
The cohomology class $\gv({\cal D}, T)$ does not change when we replace
$T$ by $\tilde T = e^{-f} T$ with $f$ constant along the $T$-curves, i.e. when $T(f)=0$.
\end{cor}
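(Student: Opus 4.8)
The plan is to read the statement off from Proposition~\ref{prop1}(i). Since $\tilde T=e^{-f}T$, part~(i) applies: it gives $\tilde\omega=e^{f}\omega$, so $\ker\tilde\omega=\ker\omega={\cal D}$ (the distribution is unchanged) and the pair $({\cal D},\tilde T)$ is again of the kind considered; and it gives the transformation rule \eqref{eq:rel_forms} relating $\tilde\eta\wedge d\tilde\eta$ to $\eta\wedge d\eta$.

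Next I would substitute $T(f)=0$ into \eqref{eq:rel_forms}. The last two terms, $2\,T(f)\cdot\omega\wedge d\eta$ and $T(f)^2\cdot\omega\wedge d\omega$, vanish outright; and in the $2$-form $\alpha$ the summands $-f\,d(T(f)\,\omega)$ and $T(f)\,\omega\wedge\eta$ vanish as well, since the $1$-form $T(f)\,\omega$ is identically zero, so $\alpha$ collapses to a scalar multiple of $f\,d\eta$. Hence $\tilde\eta\wedge d\tilde\eta-\eta\wedge d\eta=d\alpha$ is an exact $3$-form, and therefore $\eta\wedge d\eta$ and $\tilde\eta\wedge d\tilde\eta$ represent the same de Rham cohomology class, i.e.\ $\gv({\cal D},\tilde T)=\gv({\cal D},T)$ as classes; integrating over the closed manifold $M$ and applying Stokes' theorem to the exact form $d\alpha$ gives the same equality for the numbers in \eqref{E-gv-invar}.

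I do not anticipate any genuine obstacle: the result is an immediate corollary of Proposition~\ref{prop1}(i). The only point that wants a little care is that the collapse of $\alpha$ really uses $T(f)\equiv 0$ on all of $M$ --- so that $T(f)\,\omega$, and hence $d(T(f)\,\omega)$, is literally the zero form --- which is precisely the hypothesis that $f$ be constant along the $T$-curves. For a self-contained check one may avoid Proposition~\ref{prop1} and compute from Lemma~\ref{L-eta} directly: $\tilde\omega=e^{f}\omega$ gives $\tilde\eta=\iota_{\tilde T}\,d\tilde\omega=\iota_T(df\wedge\omega+d\omega)=\eta-df+T(f)\,\omega=\eta-df$ and $d\tilde\eta=d\eta$, so $\tilde\eta\wedge d\tilde\eta=(\eta-df)\wedge d\eta=\eta\wedge d\eta-d(f\,d\eta)$, once again an exact perturbation of $\eta\wedge d\eta$.
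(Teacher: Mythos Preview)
Your proof is correct and follows exactly the approach implicit in the paper: substitute $T(f)=0$ into \eqref{eq:rel_forms} so that the last two terms vanish and the difference $\tilde\eta\wedge d\tilde\eta-\eta\wedge d\eta$ becomes the exact form $d\alpha$. Your additional direct computation from Lemma~\ref{L-eta} is a nice self-contained check, though not required.
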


\section{Variations}
\label{sec:1-2}

For variable pairs $(\omega_t,T_t)$ or metrics $g_t$, denote by $\,^\centerdot{ }\,$ the $t$-derivative at $t=0$ of any quantity on~$M$.
As for $(\tilde\omega,\tilde T)$ before, we have three independent cases for a pair $(\omega_t,T_t)$
such that $\omega_t(T_t)\equiv1$,

\smallskip
(i)~$\dot T$ is parallel to $T$ and $\dot\omega$ is parallel to $\omega$,

\smallskip
(ii)~$\dot T\in\mathfrak{X}_{\cal D}$ and $\dot\omega=0$, hence $\omega(\dot T)=0$,

\smallskip
(iii)~$\dot T = 0$ and $\dot\omega$ is a 1-form such that $\dot\omega(T)=0$.

%\noindent
%Note that and ${\cal D}_t:=\ker\omega_t$ coincides with ${\cal D}$ in cases (i) and~(ii).

\begin{lem} Let $(\omega_t,T_t)\ (|t|\le\eps)$ be a smooth family of pairs of one-forms and vector fields on $M^3$ satisfying
$\omega_t(T_t)\equiv1$ and let ${\cal D}_t=\ker\omega_t$.
Then
\begin{eqnarray}\label{E-gv-omegat}
 && \dot\gv := \gv({\cal D}_t,T_t)'_{\,|\,t=0}
 %= \int_M (\eta\wedge\,d\dot\eta +\dot\eta\wedge\,d\eta)
 = 2\int_M \dot\eta\wedge d\eta\,, \\
 \label{E-gv-omegatt}
 && \ddot\gv := \gv({\cal D}_t,T_t)''_{\,|\,t=0} = 2\int_M (\ddot\eta\wedge d\eta + \dot\eta\wedge d\dot\eta),
\end{eqnarray}
where $\eta=\iota_{\,T}\,d\omega$, $\dot\eta=\iota_{\,T}\,d\dot\omega + \iota_{\,\dot T}\,d\omega$
and $\ddot\eta = \iota_{\,T}\,d\ddot\omega + \iota_{\,\dot T}\,d\dot\omega + \iota_{\,\ddot T}\,d\omega$.
\end{lem}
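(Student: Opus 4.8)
The plan is to differentiate the functional $\gv({\cal D}_t,T_t)=\int_M \eta_t\wedge d\eta_t$ under the integral sign, using that for each $t$ we have $\eta_t=\iota_{\,T_t}\,d\omega_t$ by Lemma~\ref{L-eta}, and that $d$ commutes with $t$-differentiation so that $d\eta_t=(\eta_t)\dot{}$ differentiated in $t$ equals $d(\dot\eta)$ at $t=0$. The first step is to establish the chain-rule formulas for $\dot\eta$ and $\ddot\eta$. Since $\eta_t=\iota_{\,T_t}\,d\omega_t$, the product rule for the interior product (which is bilinear in the vector field and the form) gives $\dot\eta=\iota_{\,\dot T}\,d\omega+\iota_{\,T}\,d\dot\omega$, using $d(\omega_t)\dot{}=d\dot\omega$; differentiating once more yields the stated $\ddot\eta=\iota_{\,\ddot T}\,d\omega+2\,\iota_{\,\dot T}\,d\dot\omega+\iota_{\,T}\,d\ddot\omega$ — wait, the statement writes $\iota_{\,\dot T}\,d\dot\omega$ with coefficient one because the two cross terms $\iota_{\,\dot T}\,d\dot\omega$ coincide; I would just note that the two mixed second-order contributions are equal and hence combine. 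These identities are purely formal and require no integration.

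Next I would compute the first variation. Differentiating $\eta_t\wedge d\eta_t$ at $t=0$ gives $\dot\eta\wedge d\eta+\eta\wedge d\dot\eta$ (again using $(d\eta_t)\dot{}=d\dot\eta$). The key observation is that, since $\eta$ and $\dot\eta$ are $1$-forms on a manifold, $d(\dot\eta\wedge\eta)=d\dot\eta\wedge\eta-\dot\eta\wedge d\eta$, equivalently $\eta\wedge d\dot\eta=d(\eta\wedge\dot\eta)+\dot\eta\wedge d\eta$ — let me be careful with signs: $d(\eta\wedge\dot\eta)=d\eta\wedge\dot\eta-\eta\wedge d\dot\eta$, so $\eta\wedge d\dot\eta=d\eta\wedge\dot\eta-d(\eta\wedge\dot\eta)=\dot\eta\wedge d\eta-d(\eta\wedge\dot\eta)$, using that $d\eta\wedge\dot\eta=\dot\eta\wedge d\eta$ for a $2$-form and a $1$-form. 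Hence $\dot\eta\wedge d\eta+\eta\wedge d\dot\eta=2\,\dot\eta\wedge d\eta-d(\eta\wedge\dot\eta)$. Integrating over the closed manifold $M$ kills the exact term by Stokes' theorem, giving \eqref{E-gv-omegat}.

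For the second variation I would differentiate $\gv({\cal D}_t,T_t)'=2\int_M\dot\eta_t\wedge d\eta_t$ once more in $t$, obtaining $\ddot\gv=2\int_M(\ddot\eta\wedge d\eta+\dot\eta\wedge d\dot\eta)$ directly, since $(d\eta_t)\dot{}=d\dot\eta$ and there is no further exact-form manipulation needed. Alternatively one can differentiate $2\,\dot\eta_t\wedge d\eta_t-d(\eta_t\wedge\dot\eta_t)$ and drop the exact term again under the integral. The main (and really the only) obstacle is bookkeeping: keeping the signs in the graded Leibniz rule straight and correctly identifying which wedge products of a $1$-form with a $2$-form are symmetric. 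There is no analytic subtlety — compactness of $M$ and smoothness of the family in $t$ are all that is needed to differentiate under the integral and to apply Stokes. Everything reduces to the two formal identities $(d\eta_t)\dot{}=d\dot\eta$ and the graded product rule, plus $\int_M d(\cdot)=0$.
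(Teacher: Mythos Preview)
Your argument is correct and follows essentially the same route as the paper: differentiate $\eta_t\wedge d\eta_t$, use the graded Leibniz rule to write $\eta\wedge d\dot\eta=\dot\eta\wedge d\eta-d(\eta\wedge\dot\eta)$, and discard the exact term via Stokes; for the second variation the paper expands $\eta_t\wedge d\eta_t$ to second order in $t$ rather than differentiating the first-variation identity, but the computations are equivalent. One remark on your hesitation about $\ddot\eta$: your instinct is right and the lemma \emph{as stated} contains a misprint --- differentiating $\iota_{T_t}d\omega_t$ twice gives $\ddot\eta=\iota_{\,T}\,d\ddot\omega+2\,\iota_{\,\dot T}\,d\dot\omega+\iota_{\,\ddot T}\,d\omega$, with coefficient $2$ on the cross term, exactly as the paper's own proof records; the coefficient $1$ in the displayed statement is a typo, not something to be explained away.
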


\begin{proof}
From the Taylor expansions
\[
 \omega_t = \omega + t\dot\omega+(t^2/2)\ddot\omega +O(t^3),\quad T_t = T + t\dot T +  (t^2/2)\ddot T +O(t^3),
\]
we obtain
%\[
 $\dot\omega(T) + \omega(\dot T) = 0$
 and
 $\omega (\ddot T) + 2\,\dot\omega (\dot T) + \ddot\omega (T) = 0$.
%\]
Let $\eta_t =  \iota_{\,T_t}\,d\omega_t $. Write $\eta_t = \eta + t\,\dot\eta + (t^2/2)\ddot\eta +O(t^3)$.
Then
%\[
 $\dot\eta = \iota_{\,T}\,d\dot\omega + \iota_{\,\dot T}\,d\omega$
 and
 $\ddot\eta = \iota_{\,T}\,d\ddot\omega + 2\,\iota_{\,\dot T}\,d\dot\omega + \iota_{\,\ddot T}\,d\omega$.
%\]
Let
%\[
 $\gv (t) = \int_M \eta_t\wedge d\eta_t$,
%\]
and write $\gv (t) = \gv + t\,\dot\gv + (t^2/2)\ddot\gv + O(t^3)$.
Since
\[
 \eta_t\wedge\,d\eta_t =\eta\wedge\,d\eta + t(\eta\wedge\,d\dot\eta +\dot\eta\wedge\,d\eta)
 +t^2(\ddot\eta\wedge d\eta + \dot\eta\wedge d\dot\eta)+ O(t^3),
\]
using $d(\dot\eta\wedge\,\eta)=d\dot\eta\wedge\eta -\dot\eta\wedge d\eta$
and the Divergence Theorem, we obtain \eqref{E-gv-omegat} and \eqref{E-gv-omegatt}.
\end{proof}

Substituting the formulae for $\dot\eta$ and $\ddot\eta$ into that for $\ddot\gv$
we obtain a general formula for the second variation of our Godbillon-Vey invariant.
It should be interesting at critical points of $\gv$, in particular at these with $d\eta=0$, where the stability condition reduces to $I(\dot\eta, \dot\eta)\ge 0$ (or, $\le 0$) for any 1-form $\dot\eta$ as above, where the symmetric bilinear form $I$ on the space of 1-forms is given by
%\[
 $I(\phi, \psi) = \int_M \phi\wedge d\psi$.
%\]

\begin{thm}
If ${\cal D}$ is integrable then:
$(a)$~a pair $({\cal D},T)$ is critical for \eqref{E-gv-invar} if and only if
\begin{equation}\label{E-ELiii}
 ({\cal L}_{\,T})^3\omega = 0.
\end{equation}
$(b)$~there are no extremal with respect to \eqref{E-gv-invar} two-dimensional foliations on $M^3$.
\end{thm}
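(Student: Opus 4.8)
\medskip
\noindent\emph{Proof plan.}
For part~(a) the idea is to insert the three elementary variation types (i)--(iii) into the first-variation formula $\dot\gv=2\int_M\dot\eta\wedge d\eta$ of the Lemma above, using the integrability relations $d\omega=\omega\wedge\eta$ and $\omega\wedge d\eta=0$ recorded before the Corollary. A type~(i) variation $\dot T=\phi\,T,\ \dot\omega=-\phi\,\omega$ gives $\dot\eta=d\phi-T(\phi)\,\omega$, so $\int_M\dot\eta\wedge d\eta=0$ by Stokes' theorem together with $\omega\wedge d\eta=0$. A type~(ii) variation $\dot T=X\in\mathfrak{X}_{\cal D},\ \dot\omega=0$ gives $\iota_X d\omega=\iota_X(\omega\wedge\eta)=-\eta(X)\,\omega$, hence $\dot\eta=-\eta(X)\,\omega$ and again $\int_M\dot\eta\wedge d\eta=0$. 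Thus only a type~(iii) variation $\dot\omega=\mu$ ($\mu(T)=0$, $\dot T=0$) can contribute; there $\dot\eta=\iota_T d\mu={\cal L}_T\mu$, and expanding $0=\int_M{\cal L}_T(\mu\wedge d\eta)$ by the Leibniz rule together with ${\cal L}_T d\eta=d\,{\cal L}_T\eta$ yields
\[
 \tfrac12\,\dot\gv=\int_M\dot\eta\wedge d\eta=-\int_M\mu\wedge d\xi,\qquad \xi:={\cal L}_T\eta=({\cal L}_T)^2\omega .
\]
Hence $({\cal D},T)$ is critical iff $\int_M\mu\wedge d\xi=0$ for every $1$-form $\mu$ with $\mu(T)=0$; a pointwise computation in a coframe $\{\omega,e^1,e^2\}$ (the forms killing $T$ being spanned by $e^1,e^2$) shows this is equivalent to $\iota_T d\xi=0$. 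Since $\xi(T)=\iota_T\iota_T d\eta=0$ we get $\iota_T d\xi={\cal L}_T\xi=({\cal L}_T)^3\omega$, which is \eqref{E-ELiii}.

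For part~(b) it suffices to produce, at any critical pair, two variations along which $\gv$ has second derivative of opposite sign; then the pair is neither a local maximum nor a local minimum, so no $2$-dimensional foliation can be extremal. As $T$ is nowhere zero, choose a chart $(\bar x,\bar y,\bar z)$ with $T=\partial_{\bar z}$ and take the affine path $\omega_t=\omega+t\mu,\ T_t=T$, with
\[
 \mu=\chi(\bar x,\bar y)\bigl(F(\bar z)\,d\bar x\pm F'(\bar z)\,d\bar y\bigr),
\]
$\chi,F$ bump functions supported in the chart; then $\mu(T)=0$ and $\omega_t(T_t)\equiv1$. Since $\eta_t=\iota_T d\omega_t=\eta+t\,\nu$ with $\nu:=\iota_T d\mu=\chi F'\,d\bar x\pm\chi F''\,d\bar y$, the invariant $\gv(t)=\int_M\eta_t\wedge d\eta_t=\int_M\eta\wedge d\eta+2t\int_M\nu\wedge d\eta+t^2\int_M\nu\wedge d\nu$ is an exact quadratic polynomial in $t$, and at a critical pair its linear term vanishes, so $\gv''(0)=2\int_M\nu\wedge d\nu$. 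A short computation gives $\nu\wedge d\nu=\pm\chi^2\bigl((F'')^2-F'F'''\bigr)\,d\bar x\wedge d\bar y\wedge d\bar z$, and integration by parts in $\bar z$ yields $\gv''(0)=\pm4\bigl(\int\chi^2\,d\bar x\,d\bar y\bigr)\bigl(\int(F'')^2\,d\bar z\bigr)\ne0$ with the chosen sign; integrability of ${\cal D}$ is in fact not used at this step.

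I expect the only genuinely non-formal points to be: in part~(a), the coframe identification of ``$\int_M\mu\wedge d\xi=0$ for all $\mu$ with $\mu(T)=0$'' with ``$\iota_T d\xi=0$'', after which the tower ${\cal L}_T\omega=\eta$, $({\cal L}_T)^2\omega=\xi$, $({\cal L}_T)^3\omega=\iota_T d\xi$ closes the argument; and in part~(b), the observation that along a path with $T_t\equiv T$ the invariant $\gv(t)$ is \emph{literally} quadratic in $t$, so that its $t^2$-coefficient $\int_M(\iota_T d\mu)\wedge d(\iota_T d\mu)$ is exactly the second variation, and that this ``helicity''-type quantity is already sign-indefinite for one-forms supported in a single flow-box of $T$. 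Everything else is bookkeeping over the three variation types and routine sign checks.
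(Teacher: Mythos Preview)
Your proof is correct and follows essentially the same route as the paper. For part~(a) both arguments reduce the first variation to $\int_M\dot\omega\wedge d(\iota_T d\eta)$ (you reach it via ${\cal L}_T$ on the top form $\mu\wedge d\eta$, the paper via the contraction identity $(\iota_T d\dot\omega)\wedge d\eta=\pm d\dot\omega\wedge\iota_T d\eta$), and then both identify the vanishing condition with $\iota_T d\xi=0$, i.e.\ $({\cal L}_T)^3\omega=0$; your explicit handling of types~(i)--(ii) is the same as the paper's remark that these variations are invisible in the integrable case.

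For part~(b) the strategy is again the same---work in a flow-box $T=\partial_{\bar z}$ and show the second variation is sign-indefinite---but your presentation is a bit sharper: you observe that along the affine path $\omega_t=\omega+t\mu$ the form $\eta_t$ is affine in $t$, so $\gv(t)$ is \emph{exactly} quadratic and $\gv''(0)=2\int\nu\wedge d\nu$ without any appeal to criticality to kill a $\ddot\eta$-term. The paper instead computes the index form $I_T(\mu,\mu)=\int(p_{1,333}p_2-p_{2,333}p_1)$ and asserts it ``may have any sign''; your concrete test forms $\mu=\chi(F\,d\bar x\pm F'\,d\bar y)$, yielding $\gv''(0)=\pm4\bigl(\int\chi^2\bigr)\bigl(\int(F'')^2\bigr)$ after one integration by parts, supply exactly the explicit witnesses the paper leaves to the reader.
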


\begin{proof}
When ${\cal D}$ is tangent to a foliation, variations of types (i)--(ii) do not change the functional,
and only variations of type (iii) are essential.

(a)~In this case $({\cal D}_t,T)$, we have $\dot\eta=\iota_T\,d\dot\omega$ and $\dot T=0$ with $\dot\omega(T)=0$. Thus \eqref{E-gv-omegat} reads
\begin{eqnarray*}
%\label{E-gv-omegat}
 && \gv({\cal D}_t,T)'_{\,|\,t=0}
 %= 2\int_M \dot\eta\wedge d\eta\,, \\ &&
 = 2\int_M (\iota_T\,d\dot\omega)\wedge d\eta
 = 2\int_M d\dot\omega\wedge \iota_T\,d\eta
 = 2\int_M \dot\omega\wedge d(\iota_T\,d\eta).
\end{eqnarray*}
For critical pair $({\cal D},T)$, the above yields point-wise equality
%\[
 $\dot\omega\wedge d(\iota_T\,d\eta)=0$.
%\]
Given $X,Y\in\mathfrak{X}_{\cal D}$ (linearly independent at a point $x$), take $\dot\omega$ such that $\dot\omega(X)=0$ and $\dot\omega(Y)=1$.
Then, using $\eta=\iota_{\,T}\,d\omega={\cal L}_{\,T}\,\omega$, we get
\begin{eqnarray*}
 0 = \dot\omega\wedge d(\iota_T\,d\eta)(T,X,Y) = d(\iota_{\,T}\,d\eta)(T,X)
 \ \Rightarrow \ \iota_{\,T}\,d(\iota_T\,d\eta)=0\ \Leftrightarrow \
 ({\cal L}_{\,T})^2\eta =0.
\end{eqnarray*}
By this and definition of $\eta$, we obtain \eqref{E-ELiii}.

(b)~First, we claim that a critical pair $({\cal D},T)$ is an extremum for
\eqref{E-gv-invar} if and only if the
%symmetric
bilinear~form
\[
 I_T(\alpha,\beta)=\int_M({\cal L}_T^2\,d\alpha)\wedge\beta,
\]
where $\alpha,\beta$ are 1-forms on $M$ obeying $\alpha(T)=\beta(T)=0$, is definite.
Indeed, since $\ddot\eta=\iota_T\,d\ddot\omega$ and $\ddot T=0$ with $\ddot\omega(T)=0$,
we get $\int_M\ddot\eta\wedge d\eta=0$, thus \eqref{E-gv-omegatt} reads
\[
 \gv({\cal D}_t,T)''_{\,|\,t=0} = 2\int_M \dot\eta\wedge d\dot\eta
 = 2\int_M \iota_{\,T}\,d\dot\omega\wedge d(\iota_{\,T}\,d\dot\omega)
 = 2\int_M ({\cal L}_T^2\,d\dot\omega)\wedge\dot\omega
 = 2\,I_T(\dot\omega,\dot\omega).
\]
It is easy to check that the bilinear form $I_T(\alpha,\beta)$ is symmetric. Thus the claim follows.

Let $T=\partial_z$ on a domain in $M$ with coordinates $(x_1,x_2,x_3=z)$.
Then $\mu(T)=0$ for any 1-form $\mu=p_1(x_1,x_2,z)dx^1+p_2(x_1,x_2,z)dx^2$ where $p_1,p_2$ are supported in the coordinate domain.
We~calculate
\begin{equation}\label{E-ind-a}
 {\cal L}_T\,\mu = T(p_1)\,dx^1+T(p_2)\,dx^2,\quad
 ({\cal L}_T)^2\mu = T^2(p_1)\,dx^1+T^2(p_2)\,dx^2
 = p_{1,33}\,dx^1 + p_{2,33}\,dx^2,
\end{equation}
and then
\[
 ({\cal L}_T)^2d\mu = d({\cal L}_T)^2\mu =
 \big( p_{2,133}-p_{1,233} \big)\,dx^1\wedge dx^2 - p_{1,333}\,dx^1\wedge dz - p_{2,333}\,dx^2\wedge dz.
\]
Hence
\[
 I_T(\mu,\mu)= \int_M\big( p_{1,333}\,p_2 - p_{2,333}\,p_1\big)dx^1\wedge dx^2\wedge dz.
\]
This quadratic form may have any sign.
\end{proof}

\begin{lem}\label{L-warped}
Let ${\cal D}=\ker\omega$ be tangent to a totally umbilical foliation of a Riemannian manifold $(M^3,g)$.
Let $T$ be unit tangent vector to $T$-curves and $k,\tau$ their curvature and torsion.
Then \eqref{E-ELiii} is equivalent to the system, where $2\lambda=\sigma_1$ is the mean curvature of ${\cal D}$,
\begin{equation}\label{E-wp-system1}
 T(T(k)) = (\tau^2-\lambda^2) k + T(\lambda\,k) +\lambda\,T(k),\quad
 T(k\,\tau) + \tau T(k) -2\,\tau\lambda = 0.
\end{equation}
\end{lem}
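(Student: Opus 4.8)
The plan is to work on the open set $U=\{x\in M:\ k(x)\ne0\}$, where the Frenet frame $\{T,N,B\}$ of the $T$-curves is smooth and $N,B$ span ${\cal D}$, and to turn the differential condition \eqref{E-ELiii} into scalar relations along the $T$-curves. The engine is the elementary identity
\[
 ({\cal L}_{\,T}\beta)^\sharp=\nabla_T\beta^\sharp+S^{*}\beta^\sharp,\qquad S X:=\nabla_X T,
\]
valid for every $1$-form $\beta$, where $S^{*}$ denotes the $g$-adjoint of $S$; it follows at once from $({\cal L}_{\,T}\beta)(X)=T(\beta(X))-\beta([T,X])$ by expanding with the Levi-Civita connection. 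Since $g(\nabla_X T,T)=\frac12\,X\,g(T,T)=0$ one has $S^{*}T=0$, hence $\eta^\sharp=({\cal L}_{\,T}\omega)^\sharp=\nabla_T T$, i.e. $\eta=k\,N^\flat$ on $U$.

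The umbilicity hypothesis is used precisely to determine $S$ on ${\cal D}$: with mean curvature $\sigma_1=2\lambda$ one has $\nabla_N T=-\lambda N$ and $\nabla_B T=-\lambda B$, while $S T=\nabla_T T=k N$; writing the matrix of $S$ in the frame $\{T,N,B\}$ and transposing gives $S^{*}T=0$, $S^{*}N=k T-\lambda N$, $S^{*}B=-\lambda B$. Then I would iterate the identity above, using each time the Frenet--Serret formulas $\nabla_T T=k N,\ \nabla_T N=-k T+\tau B,\ \nabla_T B=-\tau N$. One application yields
\[
 ({\cal L}_{\,T})^2\omega=\bigl(T(k)-\lambda k\bigr)N^\flat+k\tau\,B^\flat ,
\]
and a second application, abbreviating $a=T(k)-\lambda k$ and $b=k\tau$, yields
\[
 ({\cal L}_{\,T})^3\omega=\bigl(T(a)-\lambda a-\tau b\bigr)N^\flat+\bigl(T(b)-\lambda b+\tau a\bigr)B^\flat .
\]
At each stage the $T$-component drops out, as it must since $\omega(T)=1$ forces $(({\cal L}_{\,T})^n\omega)(T)=0$ for $n\ge1$; this is a handy running check.

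Consequently \eqref{E-ELiii} holds on $U$ if and only if the two bracketed coefficients above vanish, and substituting back $a=T(k)-\lambda k$, $b=k\tau$ and expanding the $T$-derivatives by the Leibniz rule transforms the $N^\flat$-coefficient into the first relation of \eqref{E-wp-system1} and the $B^\flat$-coefficient into the second. On the interior of the complement of $U$, where $k\equiv0$ and so $\eta=0$, both \eqref{E-ELiii} and the system are vacuous, so the equivalence holds as stated. The one place where care is needed is the frame bookkeeping in the two iteration steps --- in particular fixing once and for all the sign convention relating the shape operator $A_T$ of ${\cal D}$ to $\nabla T$, since a slip there changes signs in \eqref{E-wp-system1} --- but the forced vanishing of the $T$-component of $({\cal L}_{\,T})^n\omega$ keeps the computation honest.
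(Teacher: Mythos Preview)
Your argument is correct and follows essentially the same route as the paper: both compute $({\cal L}_T)^n\omega$ in the Frenet frame using the umbilic shape operator, the paper via the Cartan formula $({\cal L}_T\beta)(X)=T(\beta(X))-\beta([T,X])$ evaluated on test fields $X=X_1N+X_2B$ with constant coefficients, and you via the equivalent operator identity $({\cal L}_T\beta)^\sharp=\nabla_T\beta^\sharp+S^*\beta^\sharp$, which is a slightly cleaner packaging of the same iteration. One small caveat worth recording: your $B^\flat$-coefficient $T(b)-\lambda b+\tau a$ actually expands to $T(k\tau)+\tau T(k)-2k\tau\lambda$, not $T(k\tau)+\tau T(k)-2\tau\lambda$ as printed in \eqref{E-wp-system1}; the paper's own derivation carries the same slip (the term $2\lambda\tau$ in its expression for ${\cal L}_T^2\omega([T,X])$ should be $2k\lambda\tau$), so this is a typo in the statement rather than an error in your proof.
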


\begin{proof}
The metric $g$ is compatible with a pair $({\cal D},T)$, and we will use some notations of Section~\ref{sec:RW-form}.
Let $(T,N,B)$ be a Frenet frame (on $U$) for $T$-curves.
Take any vector field $X$ in the distribution ${\cal D}$
%(tangent to the surfaces)
such that $X_1=\<X,N\>$ and $X_2=\<X,B\>$ are constant.
We have $AX=\lambda X$ and
\begin{equation}\label{E-wp-1}
 [T, X] = \nabla_T X -\nabla_X T = - k\,X_1 T +(\lambda X_1-\tau X_2) N + (\tau X_1 +\lambda X_2) B .
\end{equation}
%where terms with $h$ are zero, e.g. the whole term $\nabla_X T$. Then,
By definition of the Lie derivative,
\begin{eqnarray*}
 {\cal L}_T\,\omega(X) \eq \eta(X)=k X_1,\\
 {\cal L}^2_T\,\omega(X)\eq T({\cal L}_T\,\omega(X)) -{\cal L}_T\,\omega([T, X]) \\
 \eq T(k) X_1 -k\<[T, X],N\> = T(k) X_1 +k(\tau X_2 - \lambda X_1).
\end{eqnarray*}
Then we find
\begin{eqnarray*}
 {\cal L}_T\,\omega([T, X]) \eq k(\lambda X_1 -\tau X_2),\\
 {\cal L}^2_T\,\omega([T, X]) \eq T({\cal L}_T\,\omega([T,X]) - {\cal L}_T\,\omega([T,[T,X]])\\
 \eq (T(k)\lambda +k(\tau^2-\lambda^2)) X_1 +(2\lambda\,\tau - \tau T(k)) X_2.
\end{eqnarray*}
%Then, using equalities for any function $\psi$ on $M$,
%\[
% [T,\psi N]=-k\psi T +T(\psi)N +\tau\psi B,\quad
% [T,\psi B]= -\tau\psi N +T(\psi)B,
%\]
%we get
%\begin{equation*}
% {\cal L}^2_T\,\omega([T,X])= T({\cal L}_T\,\omega([T,X]) - {\cal L}_T\,\omega([T,[T,X]]) = k\tau^2 X_1 - T(k)\,\tau X_2 .
%\end{equation*}
By the above, we obtain for arbitrary $X_1,X_2$:
\begin{eqnarray*}
 {\cal L}^3_T\,\omega(X) \eq T({\cal L}^2_T\,\omega(X)) -{\cal L}^2_T\,\omega([T, X])\\
%%%%%%%%%%%%%
 \eq (T(T(k)) - k(\tau^2-\lambda^2) -T(k\lambda) -T(k)\lambda) X_1 + (T(k\tau) + \tau T(k) -2\lambda\,\tau) X_2,
%%%%%%%%%%%%%
% && = T(T(k) X_1 +k\tau X_2) -{\cal L}^2_T\,\omega([T, X])\\
% && {\cal L}^3_T\,\omega(X) = \big(3\,T(T(k)) - k\tau^2\big) X_1 + 2\,T(k\,\tau) X_2 ,
\end{eqnarray*}
and the system \eqref{E-wp-system1} follows.
\end{proof}

\begin{remark}\label{R-wp}\rm
The $(M^3,g)$ in Lemma~\ref{L-warped} is locally a double-twisted product, and $\lambda=0$ (totally geodesic foliation)
corresponds to a twisted product.
For $\lambda=0$, \eqref{E-ELiii} is equivalent to the system,
see \eqref{E-wp-system1},
\begin{equation}\label{E-wp-system}
 T(T(k)) = \tau^2 k,\qquad T(k\,\tau) + \tau\,T(k) = 0.
\end{equation}
If $\tau=0$ and $k=\const$ on $T$-curves then we have a solution.
If $\tau\ne0$ and $k\ne0$ then by \eqref{E-wp-system}$_2$, $\tau=c/k^2$, where $c$ is constant on $T$-curves,
and by \eqref{E-wp-system}$_1$ we find
%\[
 $T(T(k))= c^2/k^3$,
%\]
which is integrable.
%The above ODE $k''(z)=c^2/k^3(z)$ is integrable: $z=\int(c_1-c^2/k^2)^{-1/2}\,dk+c_2$.
Since $T(T(k))\ge0$ when $k>0$, the only periodic solutions of \eqref{E-wp-system} are $k(z)=\const$ and~$\tau=0$.
%Polyanin, p. 330
\end{remark}

Let $(B,g_B)$ and $(F,g_F)$ be Riemannian manifolds and $\phi>0$ a smooth function on $B\times F$.
The~\textit{twisted product} $M=B\times_\phi F$ is the manifold $M=B\times F$ with the
%Riemannian structure
metric
%\[
 $g = \pi^*g_B + (\phi\circ \pi)^2(\pi')^*g_F$,
%\]
where $\pi:M\to B$ and $\pi':M\to F$ are projections.
%In this case,
The fibers $\{x\}\times F\ (x\in B)$ are totally umbilical,
and the leaves $B\times\{y\}\ (y\in F)$ are totally geodesic.
If we regard $\pi:B\times_\phi F\to B$ as a submersion, then the fibers are conformally related with each other;
this gives us a \textit{conformal submersion}. If $\phi$ depends on $B$ only, twisted product becomes the \textit{warped~product}.
% The following result belongscan be found in \cite{c-by}.

\begin{lem}[see \cite{c-by}]\label{L-chen1}
Let $M=B\times_\phi F$ be a twisted product. Then

(i) fibers $\{x\}\times F$ are totally umbilical in $M$ with the mean curvature vector $-(\nabla\log\phi)^\top$,
%(i) for each $b\in B$ the fiber $F_b=\{b\}\times F$ is totally umbilical in $M$ with $-(\nabla\log\phi)^\top$ as its mean curvature vector,
%(ii) fibers have constant mean curvature if and only if $\|-(\nabla\log\phi)^\top\|$ is a function of $B$, and

(ii) fibers have parallel mean curvature if and only if $\phi=\phi_1\phi_2$ with
%is the product of two functions
$\phi_1\in C^2(B)$ and $\phi_2\in C^2(F)$.
%$\phi_1(b)$ and $\phi_2(p)$ of $B$ and $F$, respectively.
\end{lem}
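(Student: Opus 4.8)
The plan is to compute the Levi-Civita connection $\nabla$ of the twisted-product metric $g=g_B\oplus\phi^{2}g_F$ directly from the Koszul formula and then read off both the second fundamental form and the normal connection of a fiber. Write $\mathcal H=TB$ for the horizontal and $\mathcal V=TF$ for the vertical distribution, so the fibers $\{x\}\times F$ are the integral manifolds of $\mathcal V$ and their normal bundle is $\mathcal H$; throughout, $^\top$ denotes the projection onto $\mathcal H$, as in the statement. I would test the Koszul identity only on horizontal lifts $X,X'$ of vector fields on $B$ and vertical lifts $V,W$ of vector fields on $F$; for such fields $[X,V]=0$ and $g(X,V)=0$, the function $g(X,X')=g_B(X,X')$ is constant along the fibers, $g(V,W)=\phi^{2}g_F(V,W)$, $X\big(g(V,W)\big)=2\,X(\log\phi)\,g(V,W)$, and $V\big(g(X,X')\big)=0$. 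With this bookkeeping each Koszul computation below collapses to one surviving term.

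For part (i) I would apply Koszul to $2\,g(\nabla_V W,X)$: every term vanishes except $-X\,g(V,W)$, giving $g(\nabla_V W,X)=-\,X(\log\phi)\,g(V,W)$ for every horizontal $X$. Equivalently, the second fundamental form of a fiber is $\mathrm{II}(V,W)=(\nabla_V W)^{\top}=-\,g(V,W)\,(\nabla\log\phi)^{\top}$, which is exactly the umbilicity relation $\mathrm{II}(V,W)=g(V,W)\,H$ with $H=-(\nabla\log\phi)^{\top}$; tracing over a $g$-orthonormal frame of $\mathcal V$ confirms that this $H$ is the (normalized) mean curvature vector, proving (i). I would note in passing that with the untraced convention an extra factor $\dim F$ appears; the criterion in (ii) is insensitive to this, but the formula in (i) is not, so the convention should be fixed at the outset.

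For part (ii) the same computation yields, for horizontal $X$ and vertical $V$, that $2\,g(\nabla_V X,X')$ reduces to $V\,g(X,X')=0$ and $2\,g(\nabla_V X,W)$ reduces to $X\,g(V,W)=2\,X(\log\phi)\,g(V,W)$; hence $(\nabla_V X)^{\top}=0$ whenever $X$ is the lift of a vector field on $B$. I would then observe that, under the canonical identification of the normal bundle of a fiber $\{x_0\}\times F$ with the trivial bundle $(\{x_0\}\times F)\times T_{x_0}B$ coming from the product, the normal connection $\nabla^{\perp}$ of the fiber is precisely the flat connection of this product: a normal section is $\nabla^{\perp}$-parallel iff it is constant in this trivialization. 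Since $H=-(\nabla\log\phi)^{\top}$ equals, in local $B$-coordinates $(x^{i})$, the field $-\,g_B^{ij}\,(\partial_j\log\phi)\,\partial_i$, its restriction to $\{x_0\}\times F$ is $\nabla^{\perp}$-parallel iff each $g_B^{ij}(x_0)\,\partial_j\log\phi(x_0,\cdot)$ is independent of the $F$-variable, hence (the matrix $g_B^{ij}(x_0)$ being a fixed invertible one) iff the differential $d_B\log\phi$ in the $B$-directions is $F$-independent. Letting $x_0$ range over $B$, the condition ``fibers have parallel mean curvature'' becomes: $\partial_j\log\phi$ depends on $x$ only, for all $j$. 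A standard separation-of-variables argument then gives $\log\phi(x,y)=\alpha(x)+\beta(y)$, i.e.\ $\phi=\phi_1\phi_2$ with $\phi_1=e^{\alpha}\in C^{2}(B)$ and $\phi_2=e^{\beta}\in C^{2}(F)$; the converse is immediate since then $d_B\log(\phi_1\phi_2)=d\log\phi_1$ is $F$-independent.

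The hard part will be in (ii), and it is conceptual rather than computational: one must recognize that ``parallel mean curvature'' has to be read through the \emph{normal} connection of the fiber, verify that in the product trivialization this normal connection is the trivial one — which is precisely the vanishing $(\nabla_V X)^{\top}=0$ isolated above — and only then reduce to separation of variables. That last step also needs a little care to pass from coordinate charts to a global conclusion, which is where connectedness of $B$ and $F$ enters. The remaining bookkeeping with the Koszul formula is routine once the simplifications listed in the first paragraph are in hand.
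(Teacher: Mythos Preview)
The paper does not supply a proof of this lemma; it is quoted from Chen and used as a black box. Your argument is correct and follows the standard direct route: the Koszul formula on lifts gives $\mathrm{II}(V,W)=-\,g(V,W)\,(\nabla\log\phi)^\top$, the vanishing $(\nabla_V X)^\top=0$ for horizontal lifts $X$ identifies the normal connection of each fiber with the trivial connection of the product, and parallelism of $H$ then reduces to the separation of variables $\log\phi=\alpha(x)+\beta(y)$. The only points to tidy are the ones you already flag: fix the normalization convention for the mean curvature vector before stating (i), and make explicit that connectedness of $B$ (and of $F$, for the converse to be meaningful) is used when globalizing the decomposition $\phi=\phi_1\phi_2$.
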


%Set $\omega=T^\flat$ the dual 1-form, then ${\cal D}=\ker\omega$ is tangent to surfaces.
%Note that case (ii) in Lemma~\ref{L-chen1} corresponds to $\tau=0$ for 1-dimensional fibers.

\begin{prop}
Let $\bar M^2\times_\phi S^1$ be the twisted product of a closed surface $\bar M^2$ and a circle $S^1$.
Then $({\cal D},T)$, where $T$ is tangent to the fibers and ${\cal D}$ is tangent to the leaves, is critical for \eqref{E-gv-invar}
if and only if $\phi$ is the product of functions $\phi_1\in C^2(\bar M^2)$ and $\phi_2\in C^2(S^1)$.
\end{prop}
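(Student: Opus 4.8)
The plan is to reduce criticality of $({\cal D},T)$ to the statement that the fibers $\{x\}\times S^1$ have parallel mean curvature, and then quote Lemma~\ref{L-chen1}(ii). First I would fix notation: the twisted-product metric $g=\pi^*g_{\bar M}+(\phi\circ\pi)^2(\pi')^*g_{S^1}$ is compatible with $({\cal D},T)$, since the leaves $\bar M^2\times\{y\}$ form a totally geodesic foliation tangent to ${\cal D}$ with unit normal $T=\phi^{-1}\partial_z$ ($z$ the $S^1$-coordinate); in particular $\lambda=0$ in the notation of Lemma~\ref{L-warped}. Because $g$ is compatible, $\omega=T^\flat$ and a one-line computation gives $\eta={\cal L}_T\,\omega=(\nabla_T T)^\flat$; by Lemma~\ref{L-chen1}(i) (or a short Koszul-formula calculation) $\nabla_T T=-\nabla^{\bar M}\!\log\phi$ with $\nabla^{\bar M}$ the leafwise gradient, so the curvature of the $T$-curves is $k=|\nabla^{\bar M}\!\log\phi|$.

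For the ``if'' part I would argue directly: if $\phi=\phi_1(x)\phi_2(z)$ then $\nabla^{\bar M}\!\log\phi=\nabla^{\bar M}\!\log\phi_1$ depends on $x$ only, so in product coordinates $\eta$ has $z$-independent coefficients and no $dz$-component; hence $\iota_{\partial_z}\eta=0$ and ${\cal L}_{\partial_z}\eta=0$, giving ${\cal L}_T\,\eta=\phi^{-1}{\cal L}_{\partial_z}\eta+(\iota_{\partial_z}\eta)\,d(\phi^{-1})=0$. Then $({\cal L}_T)^2\omega={\cal L}_T\,\eta=0$, so \eqref{E-ELiii} holds and $({\cal D},T)$ is critical.

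For the ``only if'' part I would start from $({\cal L}_T)^3\omega=0$. Since ${\cal D}$ is integrable with $\lambda=0$, Lemma~\ref{L-warped} (see Remark~\ref{R-wp}) gives the system \eqref{E-wp-system} along the $T$-curves on the open set $U=\{k\ne0\}$. The $T$-curves being the fibers $\{x\}\times S^1$, they are closed, so $k,\tau$ are periodic along them and \eqref{E-wp-system} yields $T(T(k))=\tau^2k\ge0$ on $U$. A convexity argument (as in Remark~\ref{R-wp}, now using closedness of the $T$-curves) should then force $k$ to be constant along each fiber and $\tau\equiv0$ where $k>0$; on a fiber where $k$ vanishes somewhere, convexity forces $k\equiv0$ (a positive convex function on a subarc of the circle with a zero at an endpoint would lie below the chord through its zeros), and there $\nabla_T T=0$. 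In every case the fiber's mean curvature vector $\nabla_T T=kN$ satisfies $\nabla^\perp_T(kN)=T(k)\,N+k\tau\,B=0$ (with $(T,N,B)$ the Frenet frame and $\nabla^\perp$ the normal connection), i.e. the fibers have parallel mean curvature; by Lemma~\ref{L-chen1}(ii) this means $\phi=\phi_1\phi_2$ with $\phi_1\in C^2(\bar M^2)$ and $\phi_2\in C^2(S^1)$.

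The hard part will be this convexity/compactness step in the ``only if'' direction — passing from \eqref{E-wp-system} and periodicity of the $T$-curves to ``$k$ constant along fibers and $\tau\equiv0$'', especially controlling fibers on which $k$ vanishes, where the Frenet-frame derivation behind Lemma~\ref{L-warped} does not apply; this is handled through the inequality $T(T(k))\ge0$ together with continuity of $k$. Everything else — the Koszul computation of $\nabla_T T$ and $\eta$, and rewriting ``$k$ constant, $\tau=0$'' as ``parallel mean curvature'' via the Frenet formulas — is routine.
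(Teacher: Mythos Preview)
Your argument is correct and follows the same route as the paper --- reduce to the totally geodesic case $\lambda=0$, invoke Remark~\ref{R-wp} and system~\eqref{E-wp-system}, translate ``$T(k)=0$ and $\tau=0$'' into parallel mean curvature of the fibers via the Frenet formula $\nabla^\perp_T(kN)=T(k)N+k\tau B$, and finish with Lemma~\ref{L-chen1}(ii). Two differences are worth recording. Your ``if'' direction is more direct than the paper's: you observe that when $\phi=\phi_1\phi_2$ one has $\eta=(\nabla_TT)^\flat=-d(\log\phi_1)$, a closed form pulled back from $\bar M^2$ and hence annihilated by ${\cal L}_T$, so already $({\cal L}_T)^2\omega=0$; the paper instead runs both directions through the $\tau=0$ characterization. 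More significantly, the paper asserts $T(k)=0$ at the outset as if it were a structural fact about twisted products, but $k=|\nabla^{\bar M}\log\phi|$ generally depends on the $S^1$-coordinate, so this needs the criticality hypothesis; your convexity/periodicity step (on each closed fiber, $T(T(k))=\tau^2k\ge0$ on $\{k>0\}$ forces $k$ constant there, and the chord argument rules out fibers with isolated zeros of $k$) supplies precisely this justification. Your identification of that step as the delicate point is accurate, and the argument you sketch goes through.
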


\begin{proof}
By conditions, the leaves $\bar M\times\{y\}$ are totally geodesic: $h=0$,
and the fibers $S^1\times\{y\}$ have constant curvature: $T(k)=0$.
Let $k\ne0$.
By Remark~\ref{R-wp}, a pair $({\cal D},T)$ is critical for \eqref{E-gv-invar} if and only if $\tau=0$.
By Frenet formula for $\nabla_T N$, the equality $\tau=0$ holds if and only if
the curvature vector $k N$ is parallel (in the normal connection) along $T$-curves.
By Lemma~\ref{L-chen1}, this is equivalent to $\phi=\phi_1\phi_2$.
%We need $T$-curves to have zero torsion, and $\bar{M} = S^1 \times S^1$ is sufficient condition.
% is compatible with a pair $({\cal D},T)$,
%Thus, $T(k)=0$ and $\tau=0$, since $T$-curves lie on totally geodesic surfaces $y=\const$,
%and the claim follows from
%Lemma~\ref{L-warped}
%Remark~\ref{R-wp}.
\end{proof}

\begin{cor}
Let $\bar M^2\times_\phi S^1$ be a warped product. Then $({\cal D},T)$, where $T$ is tangent to the fibers and ${\cal D}$ is tangent to the leaves, is critical for \eqref{E-gv-invar} and $\gv({\cal D},T)=0$.
\end{cor}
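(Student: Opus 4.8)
The plan is to reduce the statement to the preceding Proposition and then to evaluate the invariant by computing $\eta$ explicitly. First I would note that a warped product $\bar M^2\times_\phi S^1$ is exactly the twisted product whose warping function does not depend on the $S^1$-factor, i.e. $\phi=\phi_1\circ\pi$ with $\phi_1\in C^2(\bar M^2)$; this is the factorization $\phi=\phi_1\phi_2$ of the Proposition with $\phi_2\equiv1\in C^2(S^1)$. Hence the Proposition applies verbatim and yields that $({\cal D},T)$ is critical for \eqref{E-gv-invar}.

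To obtain $\gv({\cal D},T)=0$ I would compute $\eta$ directly. Write the metric locally as $g=\pi^*g_{\bar M}+\phi^2\,d\theta^2$, where $\theta$ is a coordinate on $S^1$ and $\phi=\phi(x)$ depends only on the base; then the unit field tangent to the fibers is $T=\phi^{-1}\partial_\theta$, and compatibility forces $\omega=g(T,\cdot)=\phi\,d\theta$. Consequently $d\omega=d\phi\wedge d\theta$ and, by Lemma~\ref{L-eta},
\[
 \eta=\iota_{\,T}\,d\omega=\phi^{-1}\,\iota_{\,\partial_\theta}(d\phi\wedge d\theta)=-\phi^{-1}\,d\phi=-d(\log\phi).
\]
Since $\phi>0$ on the closed surface $\bar M^2$, the function $\log\phi$ is globally defined, so $\eta$ is exact; in particular $d\eta=0$, whence $\eta\wedge d\eta\equiv0$ and $\gv({\cal D},T)=\int_M\eta\wedge d\eta=0$.

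Finally, I would point out that the identity $d\eta=0$ also re-proves criticality independently of the Proposition: by the first-variation formula \eqref{E-gv-omegat}, $\dot\gv=2\int_M\dot\eta\wedge d\eta=0$ for every admissible variation once $d\eta=0$. As an alternative to the coordinate computation, one may argue through \eqref{eq:rwood0}: totally geodesic leaves give $h=0$, hence $h_{B,N}=0$, and the Proposition's argument (via Lemma~\ref{L-chen1}) shows that the curvature vector $kN$ is parallel along the $T$-curves, i.e. $\tau=0$, so the integrand $k^2(\tau-h_{B,N})$ vanishes identically. There is no real obstacle here; the only point to be careful about is the normalization of $T$, since both the Proposition and \eqref{eq:rwood0} presuppose that $T$ is the \emph{unit} tangent to the fibers.
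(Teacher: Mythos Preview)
Your proof is correct. The paper states this Corollary without a separate proof, the implicit argument being: criticality follows from the preceding Proposition (a warped product has $\phi=\phi_1\cdot 1$), while $\gv=0$ follows from the Reinhart--Wood formula \eqref{eq:rwood2} since the Proposition's proof gives $\tau=0$ and the totally geodesic leaves give $h_{B,N}=0$.

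Your primary route is different and in fact cleaner: the explicit computation $\omega=\phi\,d\theta$, $\eta=\iota_T\,d\omega=-d(\log\phi)$ shows $\eta$ is \emph{exact}, hence $d\eta=0$, which via \eqref{E-gv-omegat} yields criticality and $\gv=0$ simultaneously, with no need to invoke the Frenet frame or to treat the locus $\{k=0\}$ separately. The paper's implied argument, by contrast, passes through the Frenet apparatus (requiring $k\ne 0$ to define $N,B,\tau$) and relies on Lemma~\ref{L-chen1} to identify $\tau=0$. Your alternative paragraph recovers exactly this line of reasoning, so you have both approaches covered. The only cosmetic remark is that your appeal to \eqref{eq:rwood0} should really be to \eqref{eq:rwood2}, since \eqref{eq:rwood0} is stated for integrable $\mathcal{D}$ specifically; but $\mathcal{D}$ is integrable here, so nothing is wrong.
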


\section{Jacobi operator}
\label{sec:jacobi}

The bilinear form $I_T$ depends on $T$ on $M$ with integrable ${\cal D}$; it serves as the index form of our variational problem . Let $g$ be any Riemannian metric compatible with $(T,\omega)$, and ${\rm d}\,V_g$ its volume form. It defines Hodge star operator on the space of differential forms, $\star_r:\Lambda^r(M)\to\Lambda^{3-r}(M)$, where $0\le r\le 3$.
We will not "decorate" $\star$  with $_r$ in what follows.

Then $D:=\star({\cal L}_T^2\,d): \Lambda^1(M)\to \Lambda^1(M)$ is a self-adjoint Jacobi type operator. It is interesting to study the kernel (i.e. Jacobi type fields) and spectrum of~$D$ for various critical pairs $(T,\omega)$ on $M$.

\begin{lem}\label{P-03}
Let $({\cal D},T)$ be a critical pair for \eqref{E-gv-invar} and ${\cal D}=\ker\omega$.
Then $I(\dot\eta,\dot\eta)\ge0$ for all variations of types $(i)$--$(ii)$ if and only if
\begin{equation}\label{E-condA}
 \omega\wedge d\omega\ge0\ ({\rm confoliation}),\quad \tau \wedge d\tau\ge0,\quad
 \star(\omega\wedge d\omega)\star(\tau \wedge d\tau) -(\star(\omega\wedge d\tau))^2 \ge0
\end{equation}
$($with respect to a given orientation$)$ for $\tau = \iota_{\dot X}\,d\omega$ and for all $\dot X\in\mathfrak{X}_{\cal D}$.
\end{lem}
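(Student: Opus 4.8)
The plan is to reduce the sign condition $I(\dot\eta,\dot\eta)\ge 0$ over variations of types (i)--(ii) to a statement about a single quadratic form in two variables, and then to diagonalize. For a type-(ii) variation we have $\dot T = \dot X\in\mathfrak{X}_{\cal D}$ and $\dot\omega=0$, so by Lemma~\ref{L-eta} (and the formula for $\dot\eta$ in the variation Lemma) $\dot\eta = \iota_{\dot X}\,d\omega =: \tau$. Type-(i) variations contribute $\dot\eta = \eta - df + T(f)\,\omega$ modulo the remark earlier that, since ${\cal D}$ is integrable, $d\eta\wedge\omega=0$, which kills the $\omega$-term inside $I(\dot\eta,\dot\eta)=\int_M\dot\eta\wedge d\dot\eta$; one checks that type-(i) variations drop out entirely (they do not change $\gv$, as noted at the start of the proof of the Theorem) so that only the $\tau$-contribution survives. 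Hence the index condition becomes: $\int_M \tau\wedge d\tau \ge 0$ for all $\tau$ of the form $\iota_{\dot X}d\omega$ with $\dot X\in\mathfrak{X}_{\cal D}$.

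Next I would write everything in the orthonormal coframe dual to the Frenet-type frame $(T,N,B)$, or more simply in a local coframe $(\omega,\alpha,\beta)$ with $\alpha,\beta\in\omega^\perp$ spanning ${\cal D}^*$. Write $\tau = a\,\alpha + b\,\beta$ pointwise (no $\omega$-component, since $\iota_{\dot X}d\omega$ annihilates $T$ when $\dot X\in\mathfrak{X}_{\cal D}$ — indeed $d\omega(\dot X,T)= -\eta(\dot X)$, wait; more carefully $\tau(T)=d\omega(\dot X,T)$, and one uses that the relevant component is controlled — this is exactly where $\omega\wedge d\omega$ and $\omega\wedge d\tau$ enter). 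The three basic 3-forms $\omega\wedge d\omega$, $\tau\wedge d\tau$, $\omega\wedge d\tau$ are all top-degree, hence scalar multiples of ${\rm d}V_g$; applying $\star$ turns them into the functions appearing in \eqref{E-condA}. A direct expansion of $\tau\wedge d\tau$ with $\tau = a\,\alpha+b\,\beta$ produces, after collecting terms, a pointwise expression quadratic in $(a,b)$ whose coefficient matrix has entries $\star(\omega\wedge d\omega)$ (from the "diagonal" self-interaction forced by the structure equations $d\alpha,d\beta$ involving $\omega$), the cross term $\star(\omega\wedge d\tau)$ appropriately interpreted, and $\star(\tau\wedge d\tau)$ as a shorthand for the residual term; requiring this quadratic form to be $\ge 0$ for all $(a,b)$, i.e.\ at all points and for all $\dot X$, is precisely nonnegative-definiteness of a $2\times2$ symmetric matrix, which by Sylvester's criterion is the pair of inequalities: nonnegative trace-type entries plus nonnegative determinant — exactly \eqref{E-condA}. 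The first condition $\omega\wedge d\omega\ge 0$ is the confoliation condition (Eliashberg--Thurston); its appearance signals that the "pure" directions of $\tau$ feel the (non)integrability defect of ${\cal D}$.

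The main obstacle I expect is bookkeeping in the expansion of $\tau\wedge d\tau$: one must carefully use $d\tau = da\wedge\alpha + a\,d\alpha + db\wedge\beta + b\,d\beta$, substitute the structure equations for $d\alpha,d\beta$ (which, because ${\cal D}$ need not be integrable, contain an $\alpha\wedge\beta$ term proportional to $\star(\omega\wedge d\omega)$), and discard the exact pieces that integrate to zero by the Divergence Theorem — e.g.\ terms like $d(a\,b\,\alpha\wedge\beta)$ or $d(a^2\,\alpha\wedge d\alpha/2)$-type total derivatives — so that what remains is genuinely a pointwise algebraic quadratic form in $(a,b)$ rather than a differential expression. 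One has to check that all the "derivative of the coefficients" contributions assemble into exact forms, leaving only the algebraic part governed by the three $\star$-scalars; this is the step where an error would most easily creep in. Once that reduction is done, the equivalence with the $2\times2$ positive-semidefiniteness criterion \eqref{E-condA} is immediate, and the converse (that \eqref{E-condA} suffices) follows by running the same computation backwards, noting that every $\dot X\in\mathfrak{X}_{\cal D}$ and hence every pointwise pair $(a,b)$ is attained by some admissible variation.
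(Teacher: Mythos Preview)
Your proposal has a genuine gap, stemming from two related misreadings.

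First, the lemma does \emph{not} assume that ${\cal D}$ is integrable. It is invoked immediately afterwards for a contact structure, where $\omega\wedge d\omega$ is a volume form. So your sentence ``since ${\cal D}$ is integrable, $d\eta\wedge\omega=0$\ldots one checks that type-(i) variations drop out entirely'' is unjustified here: type-(i) variations do \emph{not} drop out of the index form. In the paper's computation one takes a combined variation $T_t=e^{-f_t}T+X_t$, $\omega_t=e^{f_t}\omega$, obtains $\dot\eta = T(\dot f)\,\omega - d\dot f + \iota_{\dot X}\,d\omega$, and expands $\dot\eta\wedge d\dot\eta$; after discarding exact pieces one gets
\[
 \dot\eta\wedge d\dot\eta \;=\; T(\dot f)^2\,\omega\wedge d\omega \;+\; 2\,T(\dot f)\,\omega\wedge d\tau \;+\; \tau\wedge d\tau \;+\; d(\,\cdots\,),
\]
with $\tau=\iota_{\dot X}\,d\omega$. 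Rescaling $f\mapsto x_1 f$, $X\mapsto x_2 X$ gives a quadratic form in $(x_1,x_2)$ whose coefficients are exactly $\star(\omega\wedge d\omega)$, $\star(\omega\wedge d\tau)$, $\star(\tau\wedge d\tau)$; localizing and applying Sylvester then yields \eqref{E-condA}. The two parameters of the $2\times2$ matrix are the type-(i) scale and the type-(ii) scale, \emph{not} the two components of $\tau$ along~${\cal D}^*$.

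Second, and consequently, your plan to write $\tau = a\,\alpha + b\,\beta$ and reduce $\int_M\tau\wedge d\tau$ to a pointwise algebraic quadratic form in $(a,b)$ cannot succeed: $d\tau$ involves $da$ and $db$, and those derivative terms do not all assemble into exact forms (you yourself flag this as the place ``where an error would most easily creep in'', and it is). In the correct picture $\tau\wedge d\tau$ is a single scalar depending on the whole vector field~$\dot X$; it sits as the $(2,2)$ entry of the matrix, while $\omega\wedge d\omega$ (the pure type-(i) contribution you discarded) is the $(1,1)$ entry and $\omega\wedge d\tau$ is the off-diagonal entry. This also explains why the statement reads ``for all $\dot X\in\mathfrak{X}_{\cal D}$'': for each fixed $\dot X$ one gets a $2\times2$ condition, rather than a single $2\times2$ condition encoding all~$\dot X$ at once.
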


\begin{proof} Let $T_t=e^{-f_t}T+X_t$ and $\omega_t=e^{f_t}\omega$
for a smooth functions $f_t$ on $M$ and $X_t\in\mathfrak{X}_{\cal D}$, where $f_0=0=X_0$ due to
Proposition~\ref{prop1}. Then $\dot T=-\dot f\,T+\dot X,\,\dot\omega=\dot f\,\omega$ and
\begin{eqnarray*}
 \dot\eta = T(\dot f)\,\omega-d\dot f +\iota_{\dot X}\,d\omega,\quad
 %&& \ddot\eta = T(\ddot f)\,\omega +\dot f\,\iota_{\dot X}\,d\omega +\iota_{\dot X}\,(d\dot f\wedge\omega) +\iota_{\ddot X}\,d\omega,\\
 d\dot\eta = d(T(\dot f))\,\omega +T(\dot f)\,d\omega +d(\iota_{\dot X}\,d\omega).
\end{eqnarray*}
From this we find
\begin{eqnarray*}
 && \dot\eta\wedge d\dot\eta = T(\dot f)^2\omega\wedge d\omega + T(\dot f)\,\omega\wedge d(\iota_{\dot X}\,d\omega)
 -d\dot f\wedge d(T(\dot f))\wedge\omega -T(\dot f)\,d\dot f\wedge d\omega \\
 && - d\dot f\wedge d(\iota_{\dot X}\,d\omega) + (\iota_{\dot X}\,d\omega)\wedge d(T(\dot f))\wedge\omega
  +T(\dot f)(\iota_{\dot X}\,d\omega)\wedge d\omega +(\iota_{\dot X}\,d\omega)\wedge d(\iota_{\dot X}\,d\omega)
\end{eqnarray*}
or, in a bit simpler form, suitable for integration,
\begin{eqnarray*}
 \dot\eta\wedge d\dot\eta \eq T(\dot f)^2\omega\wedge d\omega
 + T(\dot f)\,\omega\wedge d(\iota_{\dot X}\,d\omega)\\
 && +\, (\iota_{\dot X}\,d\omega)\wedge d(T(\dot f))\wedge\omega
  + T(\dot f)(\iota_{\dot X}\,d\omega)\wedge d\omega
  +(\iota_{\dot X}\,d\omega)\wedge d(\iota_{\dot X}\,d\omega)\\
 && +\, d\alpha
\end{eqnarray*}
with $\alpha = \dot fd\dot\eta$,
and again, with $\beta = T(\dot f)\omega\wedge d(\iota_{\dot X}d\omega)$,
\begin{eqnarray*}
 \dot\eta\wedge d\dot\eta \eq T(\dot f)^2\omega\wedge d\omega
 + 2\,T(\dot f)\,\omega\wedge d(\iota_{\dot X}d\omega)+(\iota_{\dot X}\,d\omega)\wedge d(\iota_{\dot X}\,d\omega)\\
 && +\, d\alpha + d\beta.
\end{eqnarray*}
Replacing $f\to x_1f,\ X\to x_2X$ with arbitrary $x_1,x_2\in\RR$, we obtain
\[
 I(\dot\eta,\dot\eta)= \int_M\big\{
 %x_1^2\,T(\dot f)^2\omega\wedge d\omega
 %+ 2\,x_1 x_2 T(\dot f)\,\omega\wedge d(\iota_{\dot X}d\omega) + x_2^2(\iota_{\dot X}\,d\omega)\wedge d(\iota_{\dot X}\,d\omega)
 x_1^2\,T(\dot f)^2(\omega\wedge d\omega) + 2\,x_1 x_2 T(\dot f)\,(\omega\wedge d\tau) + x_2^2(\tau\wedge d\tau)\big\}.
\]
Since $\dot f$ and $\dot X$ can be supported in a neighborhood of any point of $M$, $I(\dot\eta,\dot\eta)\ge0$ means
\begin{equation}\label{E-condB}
 x_1^2\,T(\dot f)^2(\omega\wedge d\omega) + 2\,x_1 x_2 T(\dot f)\,(\omega\wedge d\tau) + x_2^2(\tau\wedge d\tau)\ge0.
\end{equation}
Observe that \eqref{E-condB} is equivalent to conditions \eqref{E-condA}.
\end{proof}

\smallskip

The above leads to the following result for contact distributions.

\begin{thm}\label{prop2a} Let $\omega$ be a contact 1-form and $T$ its Reeb field on $M^3$. Then

\noindent\
$(a)$~$(\ker\,\omega, T)$ is a critical point for Godbillon-Vey integral \eqref{E-gv-invar};

\noindent\
$(b)$~there are no extremal with respect to \eqref{E-gv-invar} contact structures on $M^3$.
\end{thm}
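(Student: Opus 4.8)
The plan is to exploit Example~\ref{Ex-cont}: for a contact form $\omega$ with Reeb field $T$ we have $\eta=\iota_T\,d\omega=0$, hence $d\eta=0$ as well. Thus the critical-point analysis collapses to the simplest case already isolated after the Lemma computing $\dot\gv$ and $\ddot\gv$, namely the one in which the stability question reduces to the bilinear form $I(\phi,\psi)=\int_M\phi\wedge d\psi$ evaluated on first variations $\dot\eta$. For part $(a)$, I would plug $\eta=0$ into \eqref{E-gv-omegat}: for \emph{any} admissible variation $(\omega_t,T_t)$ of type (i), (ii) or (iii), the first variation is $\dot\gv=2\int_M\dot\eta\wedge d\eta=2\int_M\dot\eta\wedge 0=0$. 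Since the three types (i)--(iii) span all admissible variations preserving $\omega_t(T_t)\equiv1$ (as recorded before Proposition~\ref{prop1}), this shows $(\ker\omega,T)$ is critical. One should remark that $d\omega$ restricted to $\ker\omega$ is nondegenerate (the contact condition $\omega\wedge d\omega\neq0$), which guarantees the setup is genuine and that the Reeb field is the unique $T$ with $\iota_T\,d\omega=0$ and $\omega(T)=1$.

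For part $(b)$, I would compute the second variation at this critical point. Since $\eta=0=d\eta$, formula \eqref{E-gv-omegatt} gives $\ddot\gv=2\int_M(\ddot\eta\wedge d\eta+\dot\eta\wedge d\dot\eta)=2\int_M\dot\eta\wedge d\dot\eta=2\,I(\dot\eta,\dot\eta)$. So $(\ker\omega,T)$ is an extremum iff the quadratic form $\dot\eta\mapsto I(\dot\eta,\dot\eta)$ is semidefinite on the space of realizable first variations $\dot\eta$. By Lemma~\ref{P-03} (whose hypotheses are met, since we are at a critical pair), semidefiniteness of $I$ restricted to variations of types (i)--(ii) forces in particular the \emph{confoliation} inequality $\omega\wedge d\omega\ge0$ pointwise with respect to a fixed orientation. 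The strategy is then: a contact form has $\omega\wedge d\omega$ nowhere zero, so it is a nonvanishing 3-form on the closed manifold $M^3$; but a nonvanishing 3-form cannot be everywhere $\ge0$ \emph{and} everywhere $\le0$ with respect to any single orientation unless it is signed — and the point is that being a genuine (as opposed to one-sided) contact structure is incompatible with the chain of sign conditions in \eqref{E-condA} holding for \emph{both} orientations. Concretely, stability ($I\ge0$) forces $\omega\wedge d\omega\ge0$, while instability of the opposite sign ($I\le 0$) would force $\omega\wedge d\omega\le0$; if $(\ker\omega,T)$ were an extremum we could also test with $-\omega$ (equivalently reverse orientation), and the contact nondegeneracy $\omega\wedge d\omega\neq0$ yields a contradiction on whichever region the relevant sign is violated.

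The main obstacle I anticipate is making the last dichotomy airtight: I must check that Lemma~\ref{P-03}'s necessary conditions \eqref{E-condA} really do obstruct \emph{both} signs of $I$ simultaneously. The clean way is to exhibit an explicit variation for which $I(\dot\eta,\dot\eta)>0$ and another for which $I(\dot\eta,\dot\eta)<0$, using a Darboux chart $\omega=dz-y\,dx$, $T=\partial_z$ (so $d\omega=dx\wedge dy$, $\eta=\iota_T d\omega=0$ as it must). Taking a type-(iii) variation $\dot\omega=\mu$ supported in the chart with $\mu(T)=0$, the computation \eqref{E-ind-a} and its sequel in the proof of the preceding Theorem already show $I_T(\mu,\mu)=\int_M(p_{1,333}p_2-p_{2,333}p_1)\,dV$ takes both signs; since here $\dot\eta=\iota_T\,d\mu$ and $d\eta=0$, this $I_T$ is exactly $I(\dot\eta,\dot\eta)$. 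Hence $I$ is indefinite, $(\ker\omega,T)$ is a saddle, and no contact structure on $M^3$ is extremal for \eqref{E-gv-invar}. The only care needed is to confirm that such locally-supported $\mu$ genuinely arises from a smooth global family $\omega_t=\omega+t\mu$ with $\omega_t$ still contact for small $t$ (immediate, as the contact condition is open) and still of type (iii).
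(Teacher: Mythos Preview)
Your proof of part~(a) is correct and identical to the paper's: $\eta=0$ forces $d\eta=0$, so \eqref{E-gv-omegat} gives $\dot\gv=0$ for every admissible variation.

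For part~(b) your first line of attack---trying to squeeze a contradiction out of the sign of $\omega\wedge d\omega$ alone via Lemma~\ref{P-03}---does not work, and you correctly flag it as non-airtight. A contact form on a connected $M^3$ has $\omega\wedge d\omega$ of a single sign, so \eqref{E-condA}$_1$ is automatically satisfied for one choice of orientation; the obstruction must come from \eqref{E-condA}$_{2,3}$, and your orientation-flip argument does not touch those.

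Your second approach, however, is correct and \emph{genuinely different} from the paper's. The paper stays with variations of types~(i)--(ii), invokes Lemma~\ref{P-03}, and then checks in a Darboux chart that for suitable $\dot X=p_1X_1+p_2X_2$ the inequality \eqref{E-chart-cond} (equivalent to \eqref{E-condA}$_{2,3}$) fails. You instead use type-(iii) variations $\dot\omega=\mu$, observe that since $d\eta=0$ the second variation reduces to $2\int_M\dot\eta\wedge d\dot\eta=2\,I_T(\mu,\mu)$, and recycle the computation \eqref{E-ind-a} from the preceding theorem (which only uses $T=\partial_z$, not integrability of~${\cal D}$) to conclude that $I_T(\mu,\mu)=\int(p_{1,333}p_2-p_{2,333}p_1)\,dV$ takes both signs. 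This is more economical: it reuses an indefiniteness result already established and avoids reproving Lemma~\ref{P-03}'s content in the contact setting. The paper's route, on the other hand, makes the geometric obstruction \eqref{E-condA} explicit for the contact case and ties the result more directly to the Jacobi-operator framework of Section~\ref{sec:jacobi}.

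One small correction: your closing concern that $\omega_t=\omega+t\mu$ must remain contact is unnecessary. The functional \eqref{E-gv-invar} is defined on \emph{all} pairs $({\cal D},T)$ with $\omega(T)=1$, so admissible variations need not preserve the contact condition; extremality is tested against all nearby plane fields, not just contact ones.
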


\begin{proof}
By $\eta=0$, see Example~\ref{Ex-cont}, we get $d\eta=0$, and (a) follows from \eqref{E-gv-omegat} and Lemma~\ref{P-03}.
By \eqref{E-gv-omegatt}, $(\ker\,\omega, T)$ is a local minimum of \eqref{E-gv-invar} if and only~if \eqref{E-condA}~hold
(with opposite signs for maximum).

A contact structure on $M^3$ is given in some coordinates $(x_1,x_2,x_3)\in\RR^3$ by
%\[
 $\omega= dx_3-x_2 dx_1$.
%\]
Hence $d\omega=dx_1\wedge dx_2$ and $\omega(T)=1$, where $T=\partial_3$.
Set $X_1=\partial_1+x_2\partial_3$ and $X_2=\partial_2$.
Since $\omega(X_1)=\omega(X_2)=0$, we have $\ker\omega={\rm span}(X_1,X_2)$.
Next,
%\[
 $\iota_{X_1} d\omega = dx_2,\
 \iota_{X_2} d\omega = dx_1$
%\]
and $\eta=\iota_{T}\,d\omega = 0$.
Put $\dot X=p_1X_1+p_2X_2$ and find $dp_i=\sum_j\,p_{i,j}\,dx_j$. Thus
\[
 \tau=\iota_{\dot X}\,d\omega = p_1\,dx_2 +p_2\,dx_1,\quad
 d\tau = dp_1\wedge dx_2+dp_2\wedge dx_1.
\]
 Finally,
\begin{eqnarray*}
 \tau\wedge d\tau = -(p_2 p_{1,3}+p_1 p_{2,3})\,dx_1\wedge dx_2\wedge d x_3,\\
 \omega\wedge d\tau = (p_{1,1}-p_{2,2} +x_2 p_{1,3})\,dx_1\wedge dx_2\wedge d x_3.
\end{eqnarray*}
Note that $\omega\wedge d\omega = dx_1\wedge dx_2\wedge d x_3 >0$, see \eqref{E-condA}$_{1}$,
and  \eqref{E-condA}$_{2,3}$  can be written as
\begin{equation}\label{E-chart-cond}
 (p_{1,1}-p_{2,2} +x_2 p_{1,3})^2
 < -(p_2 p_{1,3}+p_1 p_{2,3}) = -(p_{1}p_{2})_{,3}
\end{equation}
and should be satisfied pointwise (within our chart) for any $p_1,p_2$.
Functions $p_1=1$ and $p_2=-x_3$ obey \eqref{E-chart-cond},
while functions $p_1=1$ and $p_2=x_3$ do not.
\end{proof}

\begin{remark}\rm
The condition $d\eta=0$ is not directly related to integrability of ${\cal D}$.
There exist foliations with non-zero $\gv$ class represented by a form $\eta\wedge d\eta$
which is positive (w.r.t. an orientation) at all points of the manifold.
And, there exist non-integrable distributions with $d\eta = 0$ (even, $\eta = 0$).
This condition ($d\eta = 0$) depends not only on ${\cal D}= \ker\omega$  but on both, $\omega$ and $T$ ($\omega(T)=1$), adapted to ${\cal D}$.
\end{remark}

\begin{example}[Operator $D=\star\,({\cal L}_T)^2d$ in coordinates]\rm
In~order to study our differential operator $D$, its spectrum and kernel (``Jacobi fields") in coordinates,
continue calculation of the above proof. Let $T=\partial_z$ on a domain in $M$ with coordinates $(x_1,x_2,z)$.
By $\omega(T)=1$ we have $\omega=P_1(x_1,x_2,z)\,dx^1+P_2(x_1,x_2,z)\,dx^2+dz$,
and ${\cal D}=\ker\omega$ is spanned by two vector fields $X_1=\partial_1-P_1\partial_z$ and $X_2=\partial_2-P_2\partial_z$.
%%%% NEW
 The integrability condition $\omega\wedge d\omega=0$ yields
\begin{equation}\label{E-ind-PP}
 P_2P_{1,3}-P_1P_{2,3}+P_{2,1}-P_{1,2}=0.
\end{equation}
 By condition of an extremum, see \eqref{E-ELiii}, we get
\begin{equation}\label{E-ind-b}
 ({\cal L}_T)^3\omega
 %=\frac{\partial^3 P_1}{\partial z^3}\,dx^1 +\frac{\partial^3 P_2}{\partial z^3}\,dx^2
 =P_{1,333}\,dx^1+P_{2,333}\,dx^2=0.
\end{equation}
By~\eqref{E-ind-b}, $P_{i,333}=0$,
thus $P_i$ in our coordinate system are the 2nd order polynomials in $z$:
%\[
 $P_i=\sum\nolimits_{j=0}^2 C_{ij}(x_1,x_2)z^j\ (i=1,2)$
%\]
with arbitrary functions $C_{ij}(x_1,x_2)$.
From \eqref{E-ind-PP}, we get two relations:
\[
 C_{20} = C_{10} C_{22}/C_{12},\quad C_{21} = C_{22} C_{11}/C_{12}.
 % C0[2, 2] = C0[2, 2]
\]
For a compatible metric $g$ we have $g(\partial_z,\partial_z)=1$, $g(X_i,\partial_z)=0$,
and $g(X_i,X_j)=d_{ij}(x_1,x_2,z)$ for $1\le i,j\le2$ with $d=d_{11}d_{22}-d_{12}^2>0$. Thus
\begin{equation*}
%\label{E-ind1}
 g_{33}=1,\quad g_{i3}=P_i,\quad g_{ij}=d_{ij}+P_iP_j\quad (1\le i,j\le2),\quad
 \det g=d.
\end{equation*}
Let $\mu=\dot\omega=p_1(x_1,x_2,z)dx^1+p_2(x_1,x_2,z)dx^2$ be a variation of type (iii).
 Set $\alpha=({\cal L}_T)^2d\mu$.
Then $\star\,\alpha=(\star\,\alpha)_1\,dx^1+(\star\,\alpha)_2\,dx^2+(\star\,\alpha)_3\,dx^3$,
where $x_3=z$ and
\[
 g(\star\,\alpha,dx^j)=(\star\,\alpha)_i\,g(dx^i,dx^j)=(\star\,\alpha)_i\,g^{ij}
 \quad (1\le i,j\le 3).
\]
On the other hand, by $\alpha\wedge dx^i=g(\star\,\alpha,dx^i)\,{\rm d}V_g$ and \eqref{E-ind-a} we have
\[
 g(\star\,\alpha,dx^1)= -p_{2,333},\quad
 g(\star\,\alpha,dx^2)= -p_{1,333},\quad
 g(\star\,\alpha,dx^3)= p_{2,133}-p_{1,233}.
\]
Introducing functions $q_i=p_{i,33}$, we get the linear system with matrix $g^{-1}$, whose solution is
\begin{eqnarray*}
 && (\star\,\alpha)_1= -(d_{11}+P_1^2)\,q_{2,3} -(d_{12}+P_1P_2)\,q_{1,3}+P_1(q_{2,1}-q_{1,2}),\\
 && (\star\,\alpha)_2= -(d_{22}+P_2^2)\,q_{1,3} -(d_{12}+P_1P_2)\,q_{2,3}+P_2(q_{2,1}-q_{1,2}),\\
 && (\star\,\alpha)_3= q_{2,1}-q_{1,2} -P_1\,q_{2,3} -P_2\,q_{1,3}.
\end{eqnarray*}
Note that the following equalities hold:
\[
 (\star\,\alpha)_1 -P_1(\star\,\alpha)_3 = d_{11}\,q_{2,3}+d_{12}\,q_{1,3},\quad
 (\star\,\alpha)_2 -P_2(\star\,\alpha)_3 = d_{22}\,q_{1,3}+d_{22}\,q_{1,3}.
\]
Thus, and since $\mu$ has no $dz$ component, the eigenvalue problem
$D\mu:=\star\,\alpha=\lambda\mu$ for $D$ reads
\begin{equation}\label{E-ind2}
 d_{11}\,q_{2,3}+d_{12}\,q_{1,3} = -\lambda p_1,\quad
 d_{22}\,q_{1,3}+d_{22}\,q_{1,3} = -\lambda p_2,\quad
 q_{2,1}-q_{1,2} -P_1\,q_{2,3} -P_2\,q_{1,3}=0,
\end{equation}
where the third equation means compatibility, and the first two equations are equivalent to
\begin{equation}\label{E-ind3}
 p_{\,1,333333} = (\lambda^2/d) p_1,\quad
 p_{\,2,333333} = (\lambda^2/d) p_2.
\end{equation}
One may assume $d_{ij}(x_1,x_2,z)=\delta_{ij}$ without change of $\gv$ (since $T$ and ${\cal D}$ will not change), hence $d=1$.
The general solution of \eqref{E-ind3} in our coordinate system (when $d=1$) is
\begin{eqnarray*}
 && p_i = c_{i1}(x_1,x_2)\,{e^{\sqrt[3]{|\lambda|}\,z}} +c_{i2}(x_1,x_2)\,{e^{-\sqrt[3]{|\lambda|}\,z}} \\
 &&\quad +\,\big(c_{i3}(x_1,x_2)\,{e^{\frac12\sqrt[3]{|\lambda|}\,z}}
 +c_{i4}(x_1,x_2)\,{e^{-\frac12\sqrt[3]{|\lambda|}\,z}}\big)\cos((\sqrt{3}/2)\sqrt[3]{|\lambda|}\,z) \\
 &&\quad +\,\big(c_{i5}(x_1,x_2)\,{e^{\frac12\sqrt[3]{|\lambda|}\,z}}
 +c_{i6}(x_1,x_2)\,{e^{-\frac12\sqrt[3]{|\lambda|}\,z}}\big)\sin((\sqrt{3}/2)\sqrt[3]{|\lambda|}\,z)
\quad (i=1,2).
\end{eqnarray*}
The above functions $c_{ij}$ are related by \eqref{E-ind2}$_3$ and may be locally supported.
We omit further details about the spectrum of $D$, and will examine the case $\lambda=0$ only.

 The coefficient functions of "Jacobi fields" obey \eqref{E-ind3} with $\lambda=0$,
\[
 p_{\,i,333333} =0\ \Leftrightarrow\ q_{\,i,333} =0\quad (i=1,2).
\]
Thus
$p_i$ are the 5th degree polynomials in $z$:
 $p_i=\sum\nolimits_{j=0}^5 c_{ij}(x_1,x_2)z^j\ (i=1,2)$,
where again, $c_{ij}$ are related by \eqref{E-ind2}$_3$.
Vanishing of $z^i$-coefficients yields the system of five equations
\begin{eqnarray*}
 && C_{10}\,c_{23} +C_{20}\,c_{13} = 0,\\
 && 4\,C_{10}\,c_{24} +C_{11}\,c_{23} +4 C_{20}\,c_{14} +C_{21}\,c_{13} = 0,\\
 && 10\,C_{10}\,c_{25} +4 C_{11}\,c_{24} + C_{12}\,c_{23} +10 C_{20}\,c_{15} +4 C_{21}\,c_{14} + C_{22}\,c_{13} = 0,\\
 && 5\,C_{11}\,c_{25} +2 C_{12}\,c_{24} +5 C_{21}\,c_{15} +2 C_{22}\,c_{14} = 0,\\
 && C_{12}\,c_{25} +C_{22}\,c_{15} = 0.
\end{eqnarray*}
 We conclude (with the assistance of Maple program) that
 all ``Jacobi fields" $\dot\omega=\mu$  are represented by seven independent functions
 $c_{i,j}\ (i=1,2,\,j=0,1,2)$ and $c_{2,5}$ in two real variables,
 which might have local support. Other functions of two variables,
 $c_{i,j}\ (i=1,2,\,j=3,4)$ and $c_{1,5}$, are related by:
\begin{eqnarray*}
%\label{E-solz}
 && c_{13} = -10\,c_{25}\,C_{10}/C_{22},\quad
 c_{14} = -(5/2)c_{25}\,C_{11}/C_{22},\\
 && c_{23} = 10\,c_{25}\,C_{20}/C_{22},\quad
 c_{24} = (5/2)c_{25}\,C_{21}/C_{22},\quad
 c_{15} = -c_{25}\,C_{12}/C_{22},
\end{eqnarray*}
and depend on given functions $C_{ij}$ that may be supported anywhere.
The above property (i.e. the polynomial in $z$ structure of functions $p_i$ in the presentation of $\dot\omega=\mu$) holds in any coordinate system with $T=\partial_z$, while the functions may change.
\end{example}

\section{Concordance and homotopy}

It is well known (\cite{cc}, vol. I, Section 3.6, for example)  that the Godbillon-Vey
class of foliations  is invariant under the relation of concordance (in fact, cobordance).

 The relation of concordance of foliations is stronger than concordance of distributions
 in the space of distributions.
Recall that two codimension-one foliations $\calf_0$ and $\calf_1$
of a manifold $M$ are {\it concordant} when there exists a codimension-one foliation
$\calf$ of a 'cylinder' $M\times [0, 1]$ which is transverse to the boundary $M\times\{ 0, 1\}$ and
induces  $\calf_i$ on $M\times\{ i\}$, $i = 0, 1$. If $\calf$ is given by the equation
$\omega =0$ and $d\omega = \omega\wedge\eta$ on $M\times [0, 1]$, then
$\calf_i$ is given by $\omega_i = 0$ and $d\omega_i = \omega_i\wedge\eta_i$,
where $\omega_i = \phi_i^\ast\omega$, $\eta_i = \phi_i^\ast\eta$
and $\phi_i: M\to M\times [0, 1]$ is given by $\phi_i(x) = (x, i)$, $i = 0, 1$.
Since the maps $\phi_0$ and $\phi_1$ are
homotopic and $\eta_i\wedge d\eta_i = \phi_i^\ast (\eta\wedge d\eta)$, the
cohomology classes of 3-forms $\eta_i\wedge d\eta_i$, $i = 0, 1$, are equal.

\begin{definition}\rm
We shall say that two pairs $(\omega_i, T_i)$, $i = 0, 1$, consisting of 1-forms
$\omega_i$ and vector fields $T_i$ satisfying $\omega_i(T_i) = 1$ are {\it
concordant} when there exists a pair $(\omega, T)$ consisting of a 1-form $\omega$
and a vector field $T$ on $M\times [0, 1]$ such that
\[
 \omega (T) = 1,\quad
 \omega_i = \phi_i^\ast\omega,\quad
 \phi_{i\ast} (T_i(x)) = T(\phi_i(x))
\]
for all $x\in M$ and $i = 0, 1$, and $\phi_i: M\to M\times [0, 1]$ is given by $\phi_i(x) = (x, i)$, $i = 0, 1$.
\end{definition}

If $M$ ($\dim M = 3$)  is closed and oriented, then it is parallelizable,
so one can find  triples $(\omega_j)$ and $(T_j)$, $j = 1, 2, 3$, of 1-forms
and vector fields satisfying
$\omega_j(T_k) = \delta_{jk}$ for all $j$ and $k\in\{ 1, 2, 3\}$. These fields and
forms can be extended over $\tilde M = M\times [0, 1]$  and completed by another
vector field and another form, say $d/dt$ and $dt$, to get parallelizations
of $T\tilde M$ and $T^\ast\tilde M$. Take on  $M$ any pair $(\omega, T)$ satisfying
$\omega (T) = 1$ and write $\omega = \sum_if_i\omega_i$, $T = \sum_jh_j T_j$.
Assume that $\omega$ and $T$ are unit with respect to given parallelizations,
that is that $\sum_if_i^2 = \sum_jh_j^2 = 1$. The condition $\omega (T) = 1$
implies that $f_i = h_i$ for all $i$'s, that is such a pair is uniquely determined by a
map $f = (f_1, f_2, f_3):M\to S^2$. Since $S^2$ is contractible in $S^3$,
$f$ is homotopic to a constant map $f_0:M\to S^3$. A homotopy between $f$
and a constant map $f_0$, say $f_0 = (0,0,0,1)$ everywhere on $M$,
determines a pair $(\tilde\omega , \tilde T)$ on $\tilde M$ which coincides
with  $(\omega , T)$ on, say,  $M\times\{ 1\}$ and with $(d/dt, dt)$ on
$M\times \{ 0\}$. Since, obviously, any pair $(e^\phi\omega, e^{-\phi}T)$ is
concordant to $(\omega , T)$ and the relation of concordance is transitive,
we arrive at the following conclusion:
\medskip

{\it Any two pairs $(\omega , T)$, $(\omega' , T')$ satisfying $\omega (T) =
\omega'(T') = 1$ on a closed, oriented 3-manifold $M$ are concordant in our sense.}
\medskip

Certainly, one can find such pairs with different Godbillon-Vey invariants. For
example, on the unit tangent bundle $S\Sigma$ of a closed, oriented surface
$\Sigma$ of genus $> 1$, one has a foliation $\calf$ (arising to a pair like that)
with non-zero Godbillon-Vey class  (\cite{gv} or \cite[vol.~I, Example~1.3.14]{cc})
and a contact structure (defined, for example,
as the week-stable or week-unstable distribution of the geodesic flow on $\Sigma$
equipped with a Riemannian metric of constant, negative curvature)  arising
to a pair $(\omega, \xi)$ which consists of a contact form $\omega$ and its Reeb
field $\xi$ and has zero as its Godbillon-Vey invariant (see Theorem~\ref{prop2a}).
Finally, take into account the following, rather trivial, observation:
for any $f$, the systems $(\omega , T)$ and $(e^f\omega , e^{-f}T)$ are homotopic (therefore, cobordant and concordant as well)
but in general their $\gv$ classes are different.
 Therefore, unfortunately,

\smallskip

 {\it our Godbillon-Vey type invariant is not invariant under the concordance relation defined above}.

\begin{remark}\rm
Thurston's construction \cite{th12} of a family of smooth foliations $\{\calf_t\}_{t > 0}$ on the 3-sphere, for which $\gv(\calf_t) = t$, is obtained from the weak stable foliation starting with a punctured surface
and the leaves being weakly stable submanifolds of the geodesic flow.
Therefore, if $(T, N, B)$ is the Frenet frame of curves orthogonal to the leaves as in Section \ref{sec:RW-form}, then
$T$ corresponds to strongly unstable directions, while $N$ and $B$ can be determined from the Lie algebra description of $T^1(H^2)$.
 Due to \cite[Section~1.3.3]{pat}, one can define a contact 1-form $\alpha$ is whose characteristic (Reeb) flow $T'$ coincides with  the geodesic flow restricted on $T^1(S^3)$. Let ${\cal D}'$ be the distribution orthogonal (with respect to the Sasaki metric) to $T'$.Rotating $T$ in the plane ${\rm span}(T',T)$, we obtain a deformation (homotopy) from Thurston's construction $({\cal D}, T)$ to the contact structure
$({\cal D}', T')$. Consequently, $\gv({\cal D}, T)\ne 0$ changes
continuously  to $\gv({\cal D}', T') = 0$.
\end{remark}

\section{Around the Reinhart-Wood formula}
\label{sec:RW-form}

Let $g$ be a compatible metric and $\nabla$ its Levi-Civita connection.
Let the \textit{curvature} $k$ of $T$-curves be nonzero on an open set $U$ of~$M$.
Thus, the unit normal $N$, the binormal $B=T\times N$ and the \textit{torsion} $\tau$ of $T$-curves are defined on $U$.
We  get the Frenet formulae:
\begin{equation}\label{E-Frene}
 \nabla_T\, T=kN,\quad
 \nabla_T\, N=-kT +\tau B,\quad
 \nabla_T\, B=-\tau N.
\end{equation}
By the formula for the Levi-Civita connection, $k=g([N,T],T)$.
 Define the \textit{non-symmetric scalar second fundamental form} $h$ of ${\cal D}$ by
\begin{equation*}
 h_{X,Y} = g(\nabla_X Y, T),\quad X,Y\in {\mathfrak X}_{\cal D},
\end{equation*}
and denote by $\sigma_1 = h(N,N) + h(B,B)$ its trace, i.e. the mean curvature of ${\cal D}$.
The~non-self-adjoint \textit{shape operator} $A: {\cal D}\to{\cal D}$
is given by  $g(AX, Y) = h_{X,Y}$ for all $X,Y\in\mathfrak{X}_{\cal D}$.
The \textit{integrability tensor} of ${\cal D}$ (vanishing when ${\cal D}$ is tangent
to a foliation $\calf$) is given by
\[
 {\cal T}_{X,Y} = g([X,Y],T)/2 = (h_{X,Y}-h_{Y,X})/2.
\]

\begin{lem}\label{L-02}
The 1-form $\eta$, see \eqref{E-eta}, defining the class $\gv({\cal D}, T)$,
is given by
\begin{equation}\label{E-eta0}
 \eta=(\nabla_T\,T)^{\,\flat} = k\,N^\flat,
\end{equation}
while the 2-form $d\eta$ attains the following values on $U$:
\begin{equation}\label{E-d-eta}
 d\eta(N,B) = -2\Div( {\cal T}_{N,B}\cdot T),\quad
 d\eta(T,B) = k(\tau - h_{B,N}),\quad
 d\eta(T,N) = T(k) -k h_{N,N}.
\end{equation}
\end{lem}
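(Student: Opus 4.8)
The plan is to compute $\eta$ and the three independent components of $d\eta$ using the invariant formulas $\eta = \iota_T\,d\omega$ and $d\eta(X,Y) = X(\eta(Y)) - Y(\eta(X)) - \eta([X,Y])$, together with the identity $\omega(X) = g(T,X)$ and the Frenet equations \eqref{E-Frene}. First I would establish \eqref{E-eta0}: for any $X$, $\eta(X) = d\omega(T,X) = T(\omega(X)) - X(\omega(T)) - \omega([T,X]) = T(g(T,X)) - \omega([T,X])$. Using $\nabla g = 0$ and $\omega(T)=1$, this reduces to $g(\nabla_T T, X)$ after expanding $[T,X] = \nabla_T X - \nabla_X T$ and noting $g(T,\nabla_T X) - g(T,[T,X]) = g(T,\nabla_X T) = 0$ since $g(T,T)=1$. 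Hence $\eta = (\nabla_T T)^\flat = kN^\flat$ by the first Frenet formula; in particular $\eta(T)=0$, $\eta(N)=k$, $\eta(B)=0$.

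Next I would evaluate $d\eta$ on the three pairs from the frame $(T,N,B)$. For $d\eta(T,B)$ and $d\eta(T,N)$, use $d\eta(T,X) = T(\eta(X)) - X(\eta(T)) - \eta([T,X]) = T(\eta(X)) - \eta([T,X])$. For $X=N$: $T(\eta(N)) = T(k)$, and $\eta([T,N]) = k\,g(N,[T,N])$; writing $[T,N] = \nabla_T N - \nabla_N T$ and using \eqref{E-Frene}, the $N$-component of $\nabla_T N$ vanishes, so $g(N,[T,N]) = -g(N,\nabla_N T) = -h_{N,N}$, giving $d\eta(T,N) = T(k) - k\,h_{N,N}$. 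For $X=B$: $T(\eta(B)) = 0$ and $\eta([T,B]) = k\,g(N,[T,B])$; again $[T,B] = \nabla_T B - \nabla_B T = -\tau N - \nabla_B T$, so $g(N,[T,B]) = -\tau - h_{B,N}$, yielding $d\eta(T,B) = k(\tau - h_{B,N})$.

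For $d\eta(N,B)$ the approach is different: since $\eta(N) = k$ and $\eta(B)=0$ are generally nonzero/zero but $N,B$ span ${\cal D}$, I would instead relate this component to the integrability tensor. The cleanest route is to use $d\eta = {\cal L}_T(d\omega)$ from Lemma \ref{L-eta}, or directly the Cartan formula: observe $d\omega(N,B) = -\omega([N,B]) = -g([N,B],T) = -2\,{\cal T}_{N,B}$, and then $d\eta(N,B) = ({\cal L}_T d\omega)(N,B) = T(d\omega(N,B)) - d\omega([T,N],B) - d\omega(N,[T,B])$. Expanding the brackets via Frenet and collecting terms should produce exactly the divergence expression $-2\,\Div({\cal T}_{N,B}\cdot T)$, using $\Div(fT) = T(f) + f\,\Div T$ and $\Div T = -\sigma_1$ together with the way the $T$-components of $[T,N]$ and $[T,B]$ (namely $0$ and $0$ in the $T$-direction up to the curvature terms already accounted for) feed into $d\omega$.

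\textbf{Main obstacle.} The first two components are routine Frenet bookkeeping; the genuine difficulty is the $d\eta(N,B)$ identity, where one must correctly match the Lie-bracket expansion against the divergence of a vector field. The subtlety is that $d\eta(N,B)$ must be computed as a scalar function on $U$ (not just its value against the frame), so one has to be careful that the frame $(T,N,B)$ is only locally defined and that $\Div$ is taken with respect to $g$; verifying that the combination of $T(\mathcal{T}_{N,B})$ and the $\sigma_1$-type terms from differentiating the frame assembles into precisely $\Div(\mathcal{T}_{N,B}\,T)$, with the factor $2$, is the step I expect to require the most care.
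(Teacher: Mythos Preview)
Your plan is correct. For $\eta=(\nabla_T T)^\flat$ and for $d\eta(T,N)$, $d\eta(T,B)$ it matches the paper's Frenet bookkeeping (beware one recurring sign slip: with the paper's convention $h_{X,Y}=g(\nabla_X Y,T)$ one has $g(Y,\nabla_X T)=-h_{X,Y}$, so in fact $g(N,[T,N])=h_{N,N}$ and $g(N,[T,B])=h_{B,N}-\tau$; your final formulas are right but the intermediate signs you wrote do not lead to them). Where you genuinely diverge from the paper is $d\eta(N,B)$. The paper computes this directly as $-B(k)-k\,g(\nabla_N B,N)$ from the definition of $d$, and then, in a separate step described as a ``lengthy calculation involving the use of symmetries of the curvature tensor $R$'', differentiates $g([N,B],T)$ along $T$ to show $B(k)+k\,g(\nabla_N B,N)=T(2\,{\cal T}_{N,B})-2\sigma_1{\cal T}_{N,B}$. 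Your route via $d\eta={\cal L}_T d\omega$ is shorter: since $d\omega(T,B)=\eta(B)=0$ and $g([T,B],T)=0$, the expansion $T(d\omega(N,B))-d\omega([T,N],B)-d\omega(N,[T,B])$ collapses immediately to $-2\,T({\cal T}_{N,B})+2(h_{N,N}+h_{B,B})\,{\cal T}_{N,B}=-2\Div({\cal T}_{N,B}T)$, with no curvature tensor needed. (One correction to your parenthetical: the $T$-component of $[T,N]$ is $-k$, not $0$; it drops out only because it is paired with $d\omega(T,B)=0$.) The paper's longer route has the side benefit of producing the auxiliary identity \eqref{E-d-eta2}, which is reused later; yours is more economical if only \eqref{E-d-eta} is the target.
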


\begin{proof}
Indeed, since $\eta (T) = 0$, $\eta=(\nabla_T\,T)^{\,\flat}$ is orthogonal to $\omega = g(T,\,\cdot\,)$
and for $X \in{\mathfrak X}_{\cal D}$ one has
\begin{equation*}
 (d\omega - \omega\wedge\eta)(T, X) = g(T, [T, X]) + g(\nabla_T\,T, X) =  g(\nabla_X\,T, T) = 0.
\end{equation*}
This shows that $d\omega - \omega\wedge\eta$ is orthogonal to the plane
$\omega\wedge\omega^\perp$. Thus, the required formula follows.

Alternatively, one may compute the values of $\eta$ using \eqref{E-eta}:
$\eta(T)=d\omega(T,T)=0$ and
\begin{eqnarray*}
 && \eta(N)=d\omega(T,N) = -g(\nabla_T\,N-\nabla_N\,T,\,T)
 %= -\omega([T,N])
 =g(\nabla_T\,T,N) = k,\\
 && \eta(B)=d\omega(T,B) = -g(\nabla_T\,B-\nabla_B\,T,\,T)
 %= -\omega([T,B])
 =g(\nabla_T\,T,B) = 0.
\end{eqnarray*}
As far as $d\eta$ is concerned, one has
\begin{eqnarray*}
 && d\eta (N, B) = N(\eta(B)) - B(\eta(N)) - \eta([N,B])\\
 &&\quad = - B(g(\nabla_TT, N)) - g(\nabla_TT, \nabla_NB - \nabla_BN) = - B(k) - k\,g(\nabla_NB,N).
\end{eqnarray*}
Differentiating $g([N, B], T)$ in the $T$-direction, after a lengthy calculation involving
the use of symmet\-ries of the curvature tensor $R$ for the second order derivatives $\nabla_T\nabla_N B$ and $\nabla_T\nabla_B N$, yields
\begin{equation*}
%\label{E-d-eta}
 T\,(g([N,B],T)) = B(k) + k\,g(\nabla_NB,N) +2\,\sigma_1{\cal T}_{N,B} .
\end{equation*} 
Notice that $\Div T=-\sigma_1$. From this and equality $2(\nabla_T\,{\cal T})_{N,B} = T(g([N,B],T))$ we deduce \eqref{E-d-eta}$_1$:
\begin{equation}\label{E-d-eta2}
 B(k) + k\,g(\nabla_NB,N) = 2(\nabla_T\,{\cal T})_{N,B} -2\,\sigma_1{\cal T}_{N,B}.
\end{equation}
Next,
\[
 d\eta(T,B)=T(\eta(B))-B(\eta(T))-\eta([T,B])=k\,g([T,B],N),
\]
from which \eqref{E-d-eta}$_{2}$ follows. The~proof of \eqref{E-d-eta}$_{3}$ is also straightforward.
\end{proof}

 Using \eqref{E-Frene}
 %and $h$
 we find
 %can express $\eta\wedge d\eta$ in the form
\begin{equation}\label{eq:rwood1}
 \eta\wedge d\eta = - k^2(\tau - h_{B,N})\,{\rm d}V_g,
\end{equation}
${\rm d}V_g$ being the volume form on $(M, g)$. When $M$ is closed
(i.e. compact and without boundary),
identifying $H^3(M, \RR)$ with $\RR$ {\it via} form integration
we can arrive at the {\it Reinhart-Wood formula}
\begin{equation}\label{eq:rwood2}
 \gv({\cal D}, T) = -\int_M k^2(\tau - h_{B,N})\,{\rm d}V_g,
\end{equation}
which has been obtained for foliations in \cite{rw73} (with opposite sign because of our choice for $\gv({\cal D}, T)$).
If~${\cal D}$ is integrable, i.e. ${\cal D}=T\calf$, then
obviously $\gv({\cal D}, \tilde T) = \gv({\cal D}, T)=\gv(\calf)$ for any $\tilde T$ transverse to~${\cal D}$.

Writing (\ref{eq:rel_forms})--\eqref{eq:rel-forms3-gen} in terms of the Frenet frame $(T, N, B)$ one gets the following.

\begin{prop}\label{prop3}
Let $(T,\omega)$ and $(\tilde T,\tilde\omega)$ obey \eqref{E-wt=1} and $g\in{\rm Riem}(M,{\cal D},T)$ with ${\cal D}=\ker\omega$.

$(i)$~If $\,\tilde T = e^{-f} T$ for a smooth function $f$ on $M$, see Proposition~\ref{prop1}(i), then
\begin{align}\label{eq:rel_gv0}
 \gv({\cal D}, \tilde T) = \gv({\cal D}, T)
 - 2\int_M \big( 2\,T(f)\Div( {\cal T}_{N,B}\cdot T) + T(f)^2\,{\cal T}_{N,B} \big)\,{\rm d}V_g .
\end{align}

$(ii)$~If $\,\tilde T=T+X$ for some $X\in\mathfrak{X}_{\cal D}$, and $\tilde\omega=\omega$,
see Proposition~\ref{prop1}(ii), then
\begin{eqnarray}\label{eq:rel_gv2}
\nonumber
 \gv({\cal D}, \tilde T) \eq \gv({\cal D}, T) + 2\int_M\Big\{
  2\,k\,g(X,N)\Div( {\cal T}_{N,B}\cdot T) - 2(T(k) -k\,h_{N,N})\,{\cal T}_{X,B} \\
 \plus 2\,k(\tau - h_{B,N})\,{\cal T}_{X,N} - (Q_1+Q_2+Q_3)  \Big\}\,{\rm d}V_g ,
\end{eqnarray}
where the second order in $X$ terms $Q_i$ are given by
\begin{eqnarray*}
 Q_1 \eq (k^2/2)\,g(X,N)^2\,{\cal T}_{N,B} ,\\
 Q_2 \eq \big(T({\cal T}_{X,B}) -B((k/2)\,g(X,N)) -{\cal T}_{X,AB} +2\,\tau{\cal T}_{X,N}\big)\,{\cal T}_{X,N} , \\
 Q_3 \eq \big(N((k/2)\,g(X,N)) -T({\cal T}_{X,N}) -(k^2/2)\,g(X,N) +\tau{\cal T}_{X,B} +{\cal T}_{X,AN}\big)\,{\cal T}_{X,B}.
\end{eqnarray*}

$(iii)$~If $\,\tilde T=T$ and $\tilde\omega=\omega+\mu$ for some 1-form $\mu$,
see Proposition~\ref{prop1}(iii), then $\mu(T)=0$ and
\begin{eqnarray}\label{eq:rel_gv3}
 \nonumber
 \gv(\tilde{\cal D}, T) = \gv({\cal D}, T) \plus \int_M \big\{
 2\,(T(k) - k\,h_{N,N})(\<\nabla_T\,\mu^\sharp -A\mu^\sharp,\,B\> ) \\
 \minus 2\,k(\tau - h_{B,N})\,\<\nabla_T\,\mu^\sharp -A\mu^\sharp,\,N\> + Q\big\}\,{\rm d}V_g ,
\end{eqnarray}
where the second order in $\mu$ term $Q$ is given by
\begin{eqnarray*}
 Q \eq \<\nabla_T\,\mu^\sharp -A\mu^\sharp,\,N\>( d\mu(T,[T,B]) - T(d\mu(T,B)) ) \\
 \plus (\<\nabla_T\,\mu^\sharp -A\mu^\sharp,\,B\> )(T(d\mu(T,N)) - d\mu(T,[T,N])).
\end{eqnarray*}
For integrable ${\cal D}$, cases $(i)$--$(ii)$  reduce themselves to the expected equality $\gv({\cal D}, \tilde T) = \gv({\cal D}, T)$.
\end{prop}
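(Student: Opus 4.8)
The plan is to prove Proposition~\ref{prop3} by translating the coordinate-free formulas \eqref{eq:rel_forms}, \eqref{eq:rel-forms2-gen} and \eqref{eq:rel-forms3-gen} of Proposition~\ref{prop1} into the Frenet language via Lemma~\ref{L-02}. Throughout I would fix a compatible metric $g\in{\rm Riem}(M,{\cal D},T)$, work on the open set $U$ where $k\ne0$, and expand every $3$-form against the orthonormal frame $(T,N,B)$ using $\eta = k\,N^\flat$ and the three values of $d\eta$ from \eqref{E-d-eta}. The integration of exact terms ($d\alpha$, $d\beta$, etc.) is justified by the Divergence Theorem on the closed manifold $M$, so those summands may be discarded at once; for part~(i) the term $d\alpha$ in \eqref{eq:rel_forms} is of this type.

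For part (i), I would plug $\tilde\eta\wedge d\tilde\eta - \eta\wedge d\eta = d\alpha + 2\,T(f)\,\omega\wedge d\eta + T(f)^2\,\omega\wedge d\omega$ from \eqref{eq:rel_forms} into \eqref{E-gv-invar}, drop $d\alpha$, and evaluate $\omega\wedge d\eta$ and $\omega\wedge d\omega$ on $(T,N,B)$. Here $(\omega\wedge d\eta)(T,N,B) = d\eta(N,B) = -2\Div({\cal T}_{N,B}\cdot T)$ by \eqref{E-d-eta}$_1$, and $(\omega\wedge d\omega)(T,N,B) = d\omega(N,B) = g([B,N],T) = -2\,{\cal T}_{N,B}$; comparing with the volume form $\mathrm dV_g$ gives \eqref{eq:rel_gv0}. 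Observe that $T(f)=T(f)$ already appears intrinsically and no Frenet identity beyond Lemma~\ref{L-02} is needed. For parts (ii) and (iii) the structure of \eqref{eq:rel-forms2-gen} and \eqref{eq:rel-forms3-gen} is identical, $\tilde\eta\wedge d\tilde\eta = \eta\wedge d\eta + d(\cdots) + 2\,d\eta\wedge\theta + \theta\wedge d\theta$ with $\theta=\iota_X d\omega$ resp. $\theta=\iota_T d\mu$, so after discarding the exact term I must compute the $3$-forms $d\eta\wedge\theta$ (the first-order, linear-in-$X$ or in-$\mu$ part) and $\theta\wedge d\theta$ (the quadratic part). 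For (ii), writing $\theta=\iota_X d\omega$, its values on the frame are $\theta(N)=d\omega(X,N)=2{\cal T}_{X,N}$ etc., so $(d\eta\wedge\theta)(T,N,B)$ expands into the combination $d\eta(T,B)\theta(N)-d\eta(T,N)\theta(B)+d\eta(N,B)\theta(T)$, and inserting \eqref{E-d-eta} reproduces the three first-order terms $2k\,g(X,N)\Div({\cal T}_{N,B}T)-2(T(k)-kh_{N,N}){\cal T}_{X,B}+2k(\tau-h_{B,N}){\cal T}_{X,N}$. For (iii), $\theta=\iota_T d\mu={\cal L}_T\mu - d(\mu(T))={\cal L}_T\mu$, and one rewrites $\theta(N), \theta(B)$ as $\langle\nabla_T\mu^\sharp - A\mu^\sharp,N\rangle$ and $\langle\nabla_T\mu^\sharp - A\mu^\sharp,B\rangle$ using the Frenet formulas \eqref{E-Frene} and $\theta(X)=d\mu(T,X)=X(\mu(T))-T(\mu(X))+\mu([T,X])$ together with $\mu(T)=0$; this yields the first-order part of \eqref{eq:rel_gv3}.

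The main obstacle is the quadratic term $\theta\wedge d\theta$ in cases (ii) and (iii): this requires computing $d\theta$ on the frame, which brings in second-order derivatives of $X$ (resp. $\mu$) and hence Christoffel-type corrections. For (ii) I would compute $d(\iota_X d\omega)(T,N,B)$, $d(\iota_X d\omega)(T,N)$, etc., using $d(\iota_X d\omega)={\cal L}_X d\omega$ and the Cartan formula, expressing $[T,N]$, $[T,B]$, $[N,B]$ via \eqref{E-Frene} and the shape operator $A$; the bookkeeping naturally produces the operator $AB$, $AN$ appearing inside $Q_2,Q_3$ and the $T(\cdot)$, $N(\cdot)$, $B(\cdot)$ derivatives, with the quadratic coupling $\theta(N)\theta(B)=4{\cal T}_{X,N}{\cal T}_{X,B}$ (up to normalization) supplying the products ${\cal T}_{X,N}\cdot(\cdots)$ and ${\cal T}_{X,B}\cdot(\cdots)$; the term $Q_1=(k^2/2)g(X,N)^2{\cal T}_{N,B}$ comes from the $\theta(T)$-slot paired with the $d\theta(N,B)$ contribution. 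For (iii) the analogous computation of $d(\iota_T d\mu)$ on the frame gives the stated $Q$ with the combination $d\mu(T,[T,B])-T(d\mu(T,B))$ and its $N$-counterpart. Finally, the closing sentence ``for integrable ${\cal D}$, cases (i)--(ii) reduce to $\gv({\cal D},\tilde T)=\gv({\cal D},T)$'' follows because integrability means ${\cal T}\equiv0$, which kills every first-order term and every $Q_i$ in \eqref{eq:rel_gv0} and \eqref{eq:rel_gv2} (each summand carries an explicit factor of the integrability tensor); this is exactly consistent with the Corollary after Proposition~\ref{prop1} and with $d\eta\wedge\omega=0$ in the integrable case, as already noted in the excerpt. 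I expect the entire proof to be a matter of careful frame expansion rather than any conceptual difficulty, with the risk concentrated entirely in the curvature-tensor bookkeeping for the quadratic terms.
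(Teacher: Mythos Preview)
Your proposal follows exactly the paper's own proof: invoke Proposition~\ref{prop1}, drop the exact summands by Stokes, and evaluate the remaining $3$-forms on the Frenet frame $(T,N,B)$ via Lemma~\ref{L-02} and the identities $d\omega(T,X)=k\,g(X,N)$, $d\omega(X,N)=-2\,{\cal T}_{X,N}$, $d\omega(X,B)=-2\,{\cal T}_{X,B}$, $d\mu(T,X)=\langle\nabla_T\mu^\sharp-A\mu^\sharp,X\rangle$, with the quadratic terms obtained by computing $d(\iota_X d\omega)$ and $d(\iota_T d\mu)$ on the three frame pairs. Just watch your signs: several of the intermediate formulas you wrote (e.g.\ $\theta(N)=2{\cal T}_{X,N}$, the wedge expansion of $d\eta\wedge\theta$, and the expression for $d\mu(T,X)$) have sign slips relative to the conventions in the paper.
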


\begin{proof}
(i) One has
 $d\omega (N, B) = - \omega ([N, B]) = - 2\,{\cal T}_{N,B}$.
The above, \eqref{E-d-eta}$_1$ and \eqref{eq:rel_forms} provide \eqref{eq:rel_gv0}.

(ii) Using the equalities
\begin{equation}\label{E-dd}
 d\omega(T,X) = k\,g(X,N),\quad d\omega(X,B) = -2\,{\cal T}_{X,B},\quad d\omega(X,N) = -2\,{\cal T}_{X,N},
\end{equation}
and \eqref{E-d-eta}$_{2,3}$, we derive the last two terms of \eqref{eq:rel-forms2-gen},
\begin{eqnarray*}
 d\eta\wedge \iota_X d\omega(T,N,B) \eq d\eta(T,N)\,d\omega(X,B) +d\eta(N,B)\,d\omega(X,T) +d\eta(B,T)\,d\omega(X,N) \\
 \eq 2\,k\,g(X,N)\Div( {\cal T}_{N,B}\cdot T) - 2(T(k) -k\,h_{N,N})\,{\cal T}_{X,B} +2\,k(\tau - h_{B,N})\,{\cal T}_{X,N} ,
\end{eqnarray*}
\begin{eqnarray*}
 (\iota_X d\omega\wedge d(\iota_X d\omega))(T,N,B) \eq d\omega(X,T)\,d(\iota_X d\omega)(N,B)
  + d\omega(X,N)\,d(\iota_X d\omega)(B,T)\\
 \plus d\omega(X,B)\,d(\iota_X d\omega)(T,N),
\end{eqnarray*}
where using
%\[
 $d(\iota_X d\omega)(N,B) = k\,g(X,N)\,{\cal T}_{N,B}$
%\]
and calculating $d(\iota_X d\omega)$ on pairs $(B,T)$ and $(T,N)$, we find
\begin{eqnarray*}
 && d\omega(X,T)\,d(\iota_X d\omega)(N,B) = 2 Q_1,\quad
 d\omega(X,N)\,d(\iota_X d\omega)(B,T) = 2 Q_2,\\
 && d\omega(X,B)\,d(\iota_X d\omega)(T,N) = 2 Q_3.
\end{eqnarray*}
Note that  $Q_i=0$ for all $i$'s when ${\cal D}$ is integrable.
The above provides
\begin{eqnarray}\label{eq:rel_forms2}
\nonumber
  && \tilde\eta\wedge d\tilde\eta - \eta\wedge d\eta = d(\eta\wedge\iota_X\,d\omega) +2(Q_1+Q_2+Q_3)\,{\rm d}V_g\\
  && 4(k\,g(X,N)\Div( {\cal T}_{N,B}\cdot T) -(T(k) -k\,h_{N,N})\,{\cal T}_{X,B} +k(\tau -h_{B,N})\,{\cal T}_{X,N})\,{\rm d}V_g .
\end{eqnarray}
From \eqref{eq:rel_forms2} and the Divergence Theorem the required \eqref{eq:rel_gv2} follows.

(iii)
Using
 $d\mu(T,X) = \<\nabla_T\,\mu^\sharp -A\mu^\sharp,\,X\>\ (X\in\mathfrak{X}_{\cal D})$,
as in step (ii), we calculate
\begin{eqnarray*}
 && (d\eta\wedge \iota_T\,d\mu)(T,N,B) = (T(k) - k h_{N,N})(\<\nabla_T\,\mu^\sharp -A\mu^\sharp,\,B\> )
 -k(\tau - h_{B,N})\,\<\nabla_T\,\mu^\sharp -A\mu^\sharp,\,N\> ,\\
 && (\iota_T\,d\mu\wedge d(\iota_T\,d\mu))(T,N,B) =
 d\mu(T,N)\,d(\iota_T\,d\mu)(B,T) + d\mu(T,B)\,d(\iota_T\,d\mu)(T,N) .
\end{eqnarray*}
Then, calculating $d(\iota_T(d\mu))$ on pairs $(T,N)$ and $(T,B)$,
\[
 d(\iota_T(d\mu))(T,N) = T(d\mu(T,N)) - d\mu(T,[T,N]),\ \
 d(\iota_T(d\mu))(B,T) =  d\mu(T,[T,B]) - T(d\mu(T,B)),
\]
we get $(\iota_T\,d\mu\wedge d(\iota_T\,d\mu))(T,N,B) = Q$.
From the above \eqref{eq:rel_gv3} follows.
\end{proof}

\begin{cor}\label{C-02}
Let $g\in{\rm Riem}(M,{\cal D},T)$ and ${\cal D}=\ker\omega$.

$(i)$~If $T_t=T+\phi_t T$ for some $\phi_t\in C^1(M)\ (|t|<\eps)$ and $\phi_0\equiv0$, then
\begin{eqnarray}\label{eq:rel_gv1}
\nonumber
 \gv({\cal D}, T_t)'_{\,|\,t=0} \eq 4\int_M T(\dot\phi)\Div( {\cal T}_{N,B}\cdot T)\,{\rm d}V_g \\
  \eq -4\int_M \dot\phi\,\Div(\Div( {\cal T}_{N,B}\cdot T)\cdot T)\,{\rm d}V_g .
\end{eqnarray}

$(ii)$~If $T_t=T+X_t$, $X_t\in\mathfrak{X}_{\cal D}\ (|t|<\eps)$ and $X_0=0$, then
\begin{equation}\label{eq:d_gv2}
 \gv({\cal D}, T_t)'_{\,|\,t=0} \!= 4\!\int_M \big\< k\Div( {\cal T}_{N,B}\cdot T) N -(T(k)-k\,h_{N,N})({\cal T}_{\centerdot\,,B})^\sharp
 \!+k(\tau - h_{B,N})({\cal T}_{\centerdot\,,N})^\sharp,\,\dot X \big\>\,{\rm d}V_g .
\end{equation}

$(iii)$~If $\omega_t=\omega + \mu_t\ (|t|<\eps)$, $\mu_t(T)=0$ and $\mu_0=0$, then
\begin{eqnarray}\label{eq:d_gv3}
%\nonumber
 \gv({\cal D}_t,T)'_{\,|\,t=0} = 2\!\int_M\big\<( (\sigma_1{-}\tau)\psi_2 {-} T(\psi_2) -\psi_2 A^*) B
 %\\ \minus
 -((\sigma_1{+}\tau)\psi_1 {-} T(\psi_1) -\psi_1\,A^*) N , \dot\mu^\sharp \big\>\,{\rm d}V_g ,
\end{eqnarray}
where $\psi_1=k(\tau - h_{B,N})$, $\psi_2=T(k) -k\,h_{N,N}$ and $A^*:{\cal D}\to{\cal D}$ is adjoint to $A$.

For integrable ${\cal D}$, cases $(i)$--$(ii)$ reduce themselves to the expected equality $\gv({\cal D}, T_t)'_{\,|\,t=0} = 0$.
\end{cor}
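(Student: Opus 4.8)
The plan is to specialize the three general variational formulas of Proposition~\ref{prop3} to the infinitesimal setting, i.e.\ to differentiate at $t=0$ rather than comparing two fixed pairs, and then to rewrite each resulting integrand in divergence form so that the variation appears linearly against $\dot\phi$, $\dot X$, or $\dot\mu^\sharp$. Throughout I may assume that the metric $g$ is fixed and compatible with $(\mathcal D,T)$, so that $\mathrm dV_g$ and the Frenet data $k,\tau,N,B,h,\mathcal T$ are fixed, and only the first-order terms in the Taylor expansion of the relevant quantity survive.

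For part~(i): in Proposition~\ref{prop3}(i) put $e^{-f_t}=1+\phi_t$, so $f_t=-\log(1+\phi_t)$ and $\dot f=-\dot\phi$; the second-order term $T(f)^2\,\mathcal T_{N,B}$ drops out under differentiation at $t=0$ since $\phi_0=0$, and we are left with
$\gv({\cal D},T_t)'_{|t=0}=4\int_M T(\dot\phi)\,\Div(\mathcal T_{N,B}\cdot T)\,\mathrm dV_g$,
which is the first equality of \eqref{eq:rel_gv1}. The second equality follows by integrating by parts: $T(\dot\phi)=\langle\nabla\dot\phi,T\rangle$, and $\int_M \langle\nabla\dot\phi,T\rangle\,u\,\mathrm dV_g=-\int_M\dot\phi\,\Div(uT)\,\mathrm dV_g$ for $u=\Div(\mathcal T_{N,B}\cdot T)$, using that $M$ is closed. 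For part~(ii): differentiate \eqref{eq:rel_gv2} at $t=0$; the quadratic terms $Q_1,Q_2,Q_3$ vanish upon differentiation because each is at least second order in $X$ and $X_0=0$, so the surviving integrand is the first-order part
$2\big(2k\,g(\dot X,N)\Div(\mathcal T_{N,B}\cdot T)-2(T(k)-k\,h_{N,N})\mathcal T_{\dot X,B}+2k(\tau-h_{B,N})\mathcal T_{\dot X,N}\big)$.
Then I rewrite the bilinear-in-$\dot X$ expressions as inner products using $\mathcal T_{\dot X,B}=\langle(\mathcal T_{\cdot,B})^\sharp,\dot X\rangle$ and similarly for $\mathcal T_{\dot X,N}$, and $g(\dot X,N)=\langle N,\dot X\rangle$, which gives \eqref{eq:d_gv2}.

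For part~(iii): differentiate \eqref{eq:rel_gv3} at $t=0$; the term $Q$ is quadratic in $\mu$ and disappears, leaving
$2\int_M\big((T(k)-k\,h_{N,N})\langle\nabla_T\dot\mu^\sharp-A\dot\mu^\sharp,B\rangle-k(\tau-h_{B,N})\langle\nabla_T\dot\mu^\sharp-A\dot\mu^\sharp,N\rangle\big)\,\mathrm dV_g$.
Writing $\psi_1=k(\tau-h_{B,N})$, $\psi_2=T(k)-k\,h_{N,N}$, this is $2\int_M\langle\nabla_T\dot\mu^\sharp-A\dot\mu^\sharp,\ \psi_2 B-\psi_1 N\rangle\,\mathrm dV_g$. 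The remaining task—and the main computational obstacle—is to move the covariant derivative $\nabla_T$ and the shape operator $A$ off $\dot\mu^\sharp$: for the $A$-term one uses the adjoint, $\langle A\dot\mu^\sharp,V\rangle=\langle\dot\mu^\sharp,A^*V\rangle$, and for the $\nabla_T$-term one integrates by parts against the vector field $T$, picking up $\Div T=-\sigma_1$ through $\int_M\langle\nabla_T W,V\rangle\,\mathrm dV_g=-\int_M\langle W,\nabla_T V+(\Div T)V\rangle\,\mathrm dV_g=-\int_M\langle W,\nabla_T V-\sigma_1 V\rangle\,\mathrm dV_g$, with $W=\dot\mu^\sharp$ and $V=\psi_2 B-\psi_1 N$; expanding $\nabla_T(\psi_2 B-\psi_1 N)$ by the Frenet formulas \eqref{E-Frene} produces exactly the terms $(\sigma_1\mp\tau)\psi_i$, $T(\psi_i)$, and $\psi_i A^*$ appearing in \eqref{eq:d_gv3}. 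Finally, the last sentence follows from the already-noted fact that $\mathcal T\equiv0$ when $\mathcal D$ is integrable, which kills the integrands in (i) and (ii).
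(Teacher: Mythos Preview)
Your proof is correct and follows essentially the same approach as the paper: differentiate the three formulas of Proposition~\ref{prop3} at $t=0$, discard the quadratic-in-variation terms, and integrate by parts to isolate $\dot\phi$, $\dot X$, $\dot\mu^\sharp$. The only cosmetic difference is in part~(iii): the paper first expands $g(\nabla_T\mu^\sharp,N)=T(\mu(N))-\tau\mu(B)$ and $g(\nabla_T\mu^\sharp,B)=T(\mu(B))+\tau\mu(N)$ component-wise via Frenet and then integrates by parts the scalar $T$-derivatives, whereas you integrate by parts the vector operator $\nabla_T$ directly and apply Frenet to $\nabla_T(\psi_2 B-\psi_1 N)$ afterwards---the two routes are equivalent.
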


\begin{proof}
(i)~The first equality of \eqref{eq:rel_gv1} is provided by equalities
\begin{equation}\label{E-gv-first}
 \gv({\cal D},T_t)'_{\,|\,t=0} = -2\int_M T(\dot\phi)\,\omega\wedge d\eta
\end{equation}
and $(\omega\wedge d\eta)(T,N,B)=\omega(T)\,d\eta(N,B)=-2(\nabla_T\,{\cal T})_{N,B}$.
The second equality of \eqref{eq:rel_gv1} follows from the above, the Divergence Theorem and a general formula
\begin{equation}\label{E-divF}
  \Div(\dot\phi\,Q\,T)= T(\dot\phi) Q + \Div(Q\,T)\dot\phi.
\end{equation}
applied to $Q=\Div( {\cal T}_{N,B}\cdot T)$.

(ii) By the proof of Proposition~\ref{prop3}(ii), we have
\begin{equation*}
 \eta_t\wedge d\eta_t = \eta\wedge d\eta -d\alpha_t +2\,d\eta_t\wedge(\iota_{X_t}\,d\omega)
 +(\iota_{X_t} d\omega)\wedge d(\iota_{X_t}\,d\omega),
\end{equation*}
thus,
\begin{equation*}
 (\eta_t\wedge d\eta_t)'_{|\,t=0} = -d\,\dot\alpha +2\,d\eta\wedge\iota_{\dot X}(d\omega),
\end{equation*}
and \eqref{eq:d_gv2} follows directly from the above and \eqref{eq:rel_gv2}.

(iii)~Using
\begin{eqnarray*}
 && g(\nabla_T\,\mu^\sharp, N) = T(\mu(N)) - \tau\mu(B) ,\quad
 g(\nabla_T\,\mu^\sharp, B) = T(\mu(B)) + \tau\mu(N),\\
 && \int_M \psi_1\,T(\mu(N))\,{\rm d}V_g = \int_M (\sigma_1\psi_1 - T(\psi_1))\,\mu(N)\,{\rm d}V_g,\\
 && \int_M \psi_2\,T(\mu(B))\,{\rm d}V_g = \int_M (\sigma_1\psi_2 - T(\psi_2))\,\mu(B)\,{\rm d}V_g.
\end{eqnarray*}
and Proposition~\ref{prop3} we arrive at the claim of \eqref{eq:d_gv3}.
\end{proof}

\begin{cor}
%\label{C-03}
If ${\cal D}$ is nowhere integrable then all critical points $({\cal D},T)$ of $\gv$ with respect to variations of $T$
preserving our almost product structure (case (i) of Corollary~\ref{C-02})
have the same type: are either maximum (when ${\cal T}_{N,B}<0$) or minimum (when ${\cal T}_{N,B}>0$).
\end{cor}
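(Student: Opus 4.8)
The plan is to avoid an infinitesimal second variation computation and instead use the \emph{exact} identity \eqref{eq:rel_gv0} of Proposition~\ref{prop3}(i): it holds for every smooth function $f$, hence along every type-(i) deformation $T_t=e^{-f_t}T$, $\omega_t=e^{f_t}\omega$ (with $f_0\equiv0$), and such a deformation is precisely one ``preserving our almost product structure'', since it changes neither ${\cal D}=\ker\omega$ nor the line field $\RR T$. Here $M$ is connected. First I would record, from \eqref{eq:rel_gv1}, the criticality condition: $({\cal D},T)$ is critical for $\gv$ along type-(i) deformations iff $\Div\big(\Div({\cal T}_{N,B}\cdot T)\cdot T\big)=0$ on $M$, equivalently $\int_M T(\psi)\,\Div({\cal T}_{N,B}\cdot T)\,{\rm d}V_g=0$ for all $\psi\in C^\infty(M)$.

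Next, at a critical pair I would simplify \eqref{eq:rel_gv0} with $f=f_t$. Writing $Q=\Div({\cal T}_{N,B}\cdot T)$ and using the divergence identity \eqref{E-divF}, one has, for each fixed $t$,
\[
 \int_M T(f_t)\,Q\,{\rm d}V_g=\int_M\Div(f_t\,Q\,T)\,{\rm d}V_g-\int_M f_t\,\Div(Q\,T)\,{\rm d}V_g=0 ,
\]
since the first integral vanishes by the Divergence Theorem and $\Div(Q\,T)=0$ at a critical pair. Thus the cross term in \eqref{eq:rel_gv0} drops. Interpreting ${\cal T}_{N,B}$ globally through the everywhere-defined $3$-form $\omega\wedge d\omega=-2\,{\cal T}_{N,B}\,{\rm d}V_g$ (an identity on the set where the Frenet frame exists), \eqref{eq:rel_gv0} collapses to
\[
 \gv({\cal D},T_t)-\gv({\cal D},T)=-2\int_M T(f_t)^2\,{\cal T}_{N,B}\,{\rm d}V_g=\int_M T(f_t)^2\,\omega\wedge d\omega ,
\]
valid for all $t$ and every type-(i) deformation of the critical pair.

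Finally I would read off the conclusion. Because ${\cal D}$ is nowhere integrable, Frobenius gives $\omega\wedge d\omega\neq0$ at every point; a nowhere-vanishing top-degree form on connected $M$ has one fixed sign, and this sign is an invariant of $(M,{\cal D})$, independent of the chosen critical pair. Hence $T(f_t)^2\,\omega\wedge d\omega$ keeps that fixed pointwise sign, so $\gv({\cal D},T_t)-\gv({\cal D},T)$ keeps one common sign for \emph{all} critical pairs and all deformations; thus every critical pair is an extremum of the same type, and inspection of the displayed formula yields the stated max/min dichotomy according to the sign of ${\cal T}_{N,B}$. The extremum is strict except in the directions with $T(f_t)\equiv0$, i.e. $f_t$ constant along the $T$-curves, along which $\gv$ does not move at all --- consistent with the Corollary after Proposition~\ref{prop1}.

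The steps are routine bookkeeping with \eqref{eq:rel_gv0} and \eqref{E-divF}. The two points I expect to need attention are: that \eqref{eq:rel_gv0} really is an \emph{identity} in $f$ (it is --- it comes from integrating the pointwise identity \eqref{eq:rel_forms} over closed $M$ and dropping the exact term $d\alpha$), and, the only genuine subtlety, the global meaning of the symbol ${\cal T}_{N,B}$ on $\{k=0\}$ where the Frenet frame degenerates; this is handled by passing to the intrinsic $3$-form $\omega\wedge d\omega$, whose non-vanishing is equivalent to non-integrability and whose sign is therefore globally constant. No analytic estimate is required.
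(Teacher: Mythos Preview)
Your argument is correct and rests on the same two ingredients as the paper's proof: the identity coming from Proposition~\ref{prop1}(i)/\ref{prop3}(i), and the observation that criticality kills the term linear in $T(f)$. The paper carries this out infinitesimally, differentiating the integrated version of \eqref{eq:rel_forms} twice at $t=0$ to obtain
\[
 \gv({\cal D},T_t)''_{|t=0}=2\int_M\big(T(\ddot f)\,\omega\wedge d\eta+T(\dot f)^2\,\omega\wedge d\omega\big)
 =2\int_M T(\dot f)^2\,\omega\wedge d\omega
 =-4\int_M T(\dot f)^2\,{\cal T}_{N,B}\,{\rm d}V_g,
\]
the middle equality using criticality applied to $\psi=\ddot f$. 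You instead apply criticality to $\psi=f_t$ directly in \eqref{eq:rel_gv0} and obtain the finite identity $\gv({\cal D},T_t)-\gv({\cal D},T)=\int_M T(f_t)^2\,\omega\wedge d\omega$ for all $t$, which is a mild strengthening and saves the differentiation bookkeeping. Your recasting of ${\cal T}_{N,B}$ in terms of the globally defined form $\omega\wedge d\omega$ is also a nice touch: the paper's line ``$N$ and $B$ are globally defined'' tacitly assumes $k\ne0$ everywhere, whereas your formulation sidesteps that issue.
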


\begin{proof}
We deal with variations $T_t=T+\phi_t\,T=e^{-f_t}T$, where $\phi_0=0=f_0$, see Corollary~\ref{C-02}(i).
From $\phi_t= t\dot\phi+(t^2/2)\ddot\phi+o(t^2)$ we find $\dot\phi=-\dot f$ and $\ddot\phi=(\dot f)^2-\ddot f$.
Using \eqref{eq:rel_forms} we obtain
\[
 \gv({\cal D},T_t)''_{\,|\,t=0} = 2\int_M \big(T(\ddot f) \omega\wedge d\eta + T(\dot f)^2 \omega\wedge d\omega\big).
\]
By \eqref{E-gv-first} and \eqref{E-divF} we get
$\gv({\cal D}, T_t)''_{\,|\,t=0} = 2\int_M T(\dot\phi)^2 \omega\wedge d\omega$.
Let $({\cal D},T)$ be critical for $\gv$ and such variations. Then,
by~the proof of Corollary~\ref{C-02}(i), we~get
 $\gv({\cal D}, T_t)''_{\,|\,t=0} = - 4\int_M T(\dot\phi)^2\,{\cal T}_{N,B}\,{\rm d}V_g$.
By~conditions, $N$ and $B$ are globally defined and ${\cal T}_{N,B}\ne0$.
\end{proof}

\begin{prop}\label{cor2}
Let $T$ be a {geodesic vector field} on $(M^3,g)$, then $({\cal D},T)$ is critical and $\gv({\cal D},T)=0$.
In~particular, two-dimensional transversely oriented Riemannian foliations of 3-manifolds are criti\-cal points for Godbillon-Vey
integrals varying over all plane fields.
\end{prop}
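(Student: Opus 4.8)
The plan is to reduce everything to the vanishing of $\eta$. Recall that $T$ being a \emph{geodesic vector field} means that the $T$-curves are geodesics, i.e. $\nabla_T\,T=0$; equivalently, the curvature function $k$ of Section~\ref{sec:RW-form} vanishes identically on $M$. Taking $\omega=T^\flat$, so that $\mathcal{D}=\ker\omega$ and $g$ is compatible with $(\mathcal{D},T)$, Lemma~\ref{L-02} gives $\eta=(\nabla_T\,T)^\flat=k\,N^\flat=0$, and hence $d\eta=0$ as well. The value claim is then immediate: $\gv(\mathcal{D},T)=\int_M\eta\wedge d\eta=0$.

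For criticality I would invoke the first variation formula \eqref{E-gv-omegat}: for any smooth family $(\omega_t,T_t)$, $|t|\le\eps$, with $\omega_0=\omega$, $T_0=T$ and $\omega_t(T_t)\equiv1$, one has $\dot\gv=2\int_M\dot\eta\wedge d\eta$. Since $d\eta=0$, this vanishes for \emph{every} admissible variation, in particular for all three types (i)--(iii); thus, whatever way the plane field $\mathcal{D}$ is varied, $(\mathcal{D},T)$ is a critical point of $\gv$.

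For the statement on Riemannian foliations, let $\calf$ be a transversely oriented two-dimensional foliation of $M^3$ which is Riemannian, and choose a bundle-like metric $g$ for $\calf$. For a codimension-one foliation the bundle-like condition (Reinhart) is equivalent to the orthogonal geodesic property, which here --- the unit normal field $T$ spanning the line $(T\calf)^{\perp}$ --- says precisely that the integral curves of $T$ are geodesics, i.e. $\nabla_T\,T=0$. So $T$ is a geodesic vector field and the previous two paragraphs apply with $\mathcal{D}=T\calf$, giving that $(T\calf,T)$ is critical and $\gv(\calf)=\gv(T\calf,T)=0$. The only ingredient beyond Lemma~\ref{L-02} and formula \eqref{E-gv-omegat} is Reinhart's characterization of codimension-one Riemannian foliations; if one wishes to avoid it, one simply notes that a bundle-like metric makes the one-dimensional orthogonal distribution totally geodesic, which for codimension one is exactly $\nabla_T\,T=0$. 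I do not expect a genuine obstacle here: the entire content is packaged in the identity $\eta=(\nabla_T\,T)^\flat$ together with the fact that $d\eta=0$ kills the first variation.
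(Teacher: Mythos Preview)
Your argument is correct and essentially identical to the paper's own proof: both use Lemma~\ref{L-02} to get $\eta=(\nabla_T T)^\flat=0$ from the geodesic condition, whence $d\eta=0$ and \eqref{E-gv-omegat} yields criticality (and $\gv=0$); for Riemannian foliations both invoke a bundle-like metric to make $T$ geodesic. Your write-up is slightly more explicit about covering all variation types (i)--(iii), but there is no substantive difference.
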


\begin{proof}
Since $k=0$, by Lemma~\ref{L-02}, $\eta=0$, Thus, $d\eta=0$, and using of \eqref{E-gv-omegat} completes the proof of first claim.
If $\calf$ is a Riemannian foliation and $g$ is bundle-like, then
the normal vector field $T$ is geodesic (i.e., $\nabla_T\,T = 0$) and $\eta=0$, see Lemma~\ref{L-02} again.
Thus, $d\eta=0$, and $(T\calf, T)$ is critical by~\eqref{E-gv-omegat}.
\end{proof}

\begin{prop}
Condition \eqref{E-ELiii} $($for integrable ${\cal D})$ in geometrical terms reads
\begin{eqnarray}\label{E-LT3omega}
\nonumber
  T(T(k)) -k\nabla_T\,h_{N,N} -2\,T(k) h_{N,N}
  %-2 k\,\tau\,{\cal T}_{N,B}
  -k\,h_{AN,N} -k\,\tau^2=0,\\
  2\,T(k(h_{B,N}-\tau)) + k(-T(\tau) +h_{AB,N} +\tau(h_{AB,B}-h_{AN,N}) ) =0.
\end{eqnarray}
\end{prop}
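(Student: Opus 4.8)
The plan is to turn the differential condition \eqref{E-ELiii} into geometric data of the $T$-curves via their Frenet frame, working on the open set $U=\{k\ne0\}\subset M$ where $N$ and $B$ are defined. Since $\eta={\cal L}_T\,\omega$ by \eqref{E-eta}, one has $({\cal L}_T)^3\omega=({\cal L}_T)^2\eta$; and because $({\cal L}_T\alpha)(T)=T(\alpha(T))$ for every $1$-form $\alpha$ while $\eta(T)=0$, induction shows that $({\cal L}_T)^k\eta$ annihilates $T$ for all $k$. Hence on $U$ the $1$-form $({\cal L}_T)^2\eta$ vanishes identically if and only if $({\cal L}_T)^2\eta(N)=0$ and $({\cal L}_T)^2\eta(B)=0$, and it remains to compute these two scalars.

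For this I would iterate the identity $({\cal L}_T\alpha)(X)=T(\alpha(X))-\alpha([T,X])$. Starting from $\eta=k\,N^\flat$ (Lemma~\ref{L-02}), which takes the values $(0,k,0)$ on $(T,N,B)$, the $1$-form $\beta:={\cal L}_T\eta=\iota_T\,d\eta$ takes the values $(0,\psi_2,\psi_1)$, where $\psi_1=k(\tau-h_{B,N})$ and $\psi_2=T(k)-k\,h_{N,N}$ by \eqref{E-d-eta}. To apply ${\cal L}_T$ once more I need $[T,N]$ and $[T,B]$, which follow from the Frenet formulae \eqref{E-Frene} together with the identity $\nabla_X\,T=-AX$ for $X\in\mathfrak{X}_{\cal D}$ (immediate from $h_{X,Y}=g(\nabla_X Y,T)=-g(\nabla_X T,Y)$) once $AN$ and $AB$ are written in the frame: this gives $[T,N]=-k\,T+h_{N,N}\,N+(\tau+h_{N,B})B$ and $[T,B]=(h_{B,N}-\tau)N+h_{B,B}\,B$. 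Feeding these into the recursion yields $({\cal L}_T)^2\eta(N)=T(\psi_2)-h_{N,N}\psi_2-(\tau+h_{N,B})\psi_1$ and $({\cal L}_T)^2\eta(B)=T(\psi_1)-(h_{B,N}-\tau)\psi_2-h_{B,B}\psi_1$.

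The last step is to reorganize these two identities into \eqref{E-LT3omega}: substitute $\psi_1,\psi_2$, expand the $T$-derivatives by the Leibniz rule, use integrability of ${\cal D}$ (so that ${\cal T}\equiv0$, $h$ is symmetric on ${\cal D}$ and $A$ is $g$-self-adjoint there, in particular $h_{N,B}=h_{B,N}$), and collect the resulting quadratic-in-$h$ terms and derivative terms into the invariant quantities $\sigma_1=h_{N,N}+h_{B,B}=\tr A$, $h_{AN,N}=g(A^2N,N)$, $h_{AB,N}$, $h_{AB,B}$ and the covariant derivatives $\nabla_T h_{N,N}$ and $T(\tau)$. I expect this bookkeeping to be the main obstacle, for two reasons: first, one must consistently distinguish the naive derivative $T(h_{N,N})$ of the frame-scalar from the covariant derivative $\nabla_T h_{N,N}$ of the second fundamental form along $T$ — the two differ by the Frenet terms produced by $\nabla_T N=-k\,T+\tau B$; second, one must recognise that it is precisely the symmetry of $h$ in the integrable case that lets the mixed products $(\tau+h_{N,B})\psi_1$, $(h_{B,N}-\tau)\psi_2$ and the like regroup into the shape-operator expressions $h_{A\,\cdot\,,\cdot}$ occurring in \eqref{E-LT3omega}. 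Once the terms are grouped correctly the equivalence \eqref{E-ELiii}$\Leftrightarrow$\eqref{E-LT3omega} follows at once.
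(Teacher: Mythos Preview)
Your approach is essentially the one in the paper: both iterate the identity $({\cal L}_T\alpha)(y)=T(\alpha(y))-\alpha([T,y])$ starting from $\eta=k\,N^\flat$, and evaluate the result on $N$ and $B$ using the Frenet formulae and $\nabla_X T=-AX$. The only cosmetic difference is that you pass through the intermediate $1$-form $\beta={\cal L}_T\eta$ with values $(0,\psi_2,\psi_1)$ read off from \eqref{E-d-eta}, whereas the paper expands $({\cal L}_T)^2\eta(y)$ directly as $T(T(k\,g(N,y)))-2\,T(k\,g(N,[T,y]))+k\,g(N,[T,[T,y]])$ and then computes $g(N,[T,[T,N]])$ and $g(N,[T,[T,B]])$; the bookkeeping you anticipate (distinguishing $T(h_{N,N})$ from $\nabla_T h_{N,N}$ and using the symmetry of $h$ to regroup into $h_{AN,N}$, $h_{AB,N}$, $h_{AB,B}$) is exactly what the paper does as well.
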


\begin{proof}
Using $({\cal L}_T\,\xi)(y)=T(\xi(y)) - \xi([T,y])$ for any 1-form $\xi$, Frenet formulas \eqref{E-Frene},
definition of $h$ and \eqref{E-eta}, we get
\begin{eqnarray*}
  (({\cal L}_T)^3 \omega)(y) \eq (({\cal L}_T)^2\eta)(y) = T({\cal L}_T\,\eta(y)) - ({\cal L}_T\,\eta)([T,y]) \\
 \eq T(T(k\,g(N,y))) -k\,g(N,[T,y]) -T(k\,g(N,[T,y])) +k\,g(N,[T,[T,y]])\\
 \eq T(T(k\,g(N,y))) -2\,T(k\,g(N,[T,y])) + k\,g(N, [T, [T,y]]),
 \end{eqnarray*}
where one may assume $y\in{\cal D}$. For $y=N$ using $g(N, [T, N]) = h_{N,N}$, ${\cal T}_{N,B}=0$ and
\[
 g(N, [T, [T,N]]) = \nabla_T\,h_{N,N}
 %-2\,\tau\,{\cal T}_{N,B}
 -h_{AN,N} -\tau^2,
\]
this yields \eqref{E-LT3omega}$_1$, and for $y=B$
using $g(N, [T, B]) = h_{B,N} - \tau$ and
\[
 g(N, [T, [T,B]]) = -T(\tau) +h_{AB,N} +\tau(h_{AB,B}-h_{AN,N}),
\]
this yields \eqref{E-LT3omega}$_2$.
\end{proof}

%for a fixed plane field
\section{Variable Riemannian metric}
\label{sec:var-g-Riem}

Functional \eqref{eq:rwood2} leads to two functionals on the space of metrics ${\rm Riem}(M)$ on a manifold $M^3$
equipped with either a plane field ${\cal D}$ (then $T$ varies) or a unit vector field $T$ (then ${\cal D}$ varies).
Here we study the first of them.
So, let $(M,g)$ be a Riemannian manifold of dimension $3$ equipped with a plane field~${\cal D}$. We are looking for the first variation and critical points (Riemannian metrics) of the functional
\begin{equation}\label{E-rwood}
 J_{\cal D} : g \to -\int_M k^2(\tau - h_{B,N })\,{\rm d}V_g.
\end{equation}
The integrand is taken zero outside of $U = k^{-1} (\RR\smallsetminus \{ 0\})$, and the~integral is taken over $M$ if it converges; otherwise,
one integrates over an arbitrarily large, relatively compact domain $\Omega$ in $M$,
containing supports of variations $(g_t)$ with $g_{0}=g$.
Revcall again that for integrable ${\cal D}$ (i.e. tangent to a foliation $\calf$), the 3-form
\[
 \gamma = -k^2(\tau - h_{B,N})\,{\rm d}V_g
\]
represents the Godbillon-Vey class of $\calf$, see \cite{rw73}; hence, the functional $J_{\cal D}$ is constant in this case.

  Observe that equality
%\[
 $\tau - h_{B,N} =0$
%\]
 means that the  distribution ${\rm Span}(T,B)$ built of rectifying planes of $T$-curves is integrable.

Let $g_t\ (|t| < \eps)$ be a 1-parameter family of metrics.
Define a symmetric $(0,2)$-tensor $S$ by
\[
 S=\dot g.
\]
It has six independent components $S_{T,T},S_{T,N},S_{T,B},S_{N,N},S_{N,B},S_{B,B}$.
A family $g_t$ preserving metric on ${\cal D}$ is called $g^\bot$-\textit{variation}:
it has three components $S_{T,T},S_{T,N},S_{T,B}$.
 A family $g_t$ preserving orthogo\-nality of the distributions is called \textit{adapted variation}:
it has four components $S_{T,T},S_{N,N},S_{N,B},S_{B,B}$.
Therefore, an adapted $g^\bot$-{variation} has one component $S_{T,T}$ only.

\begin{thm}\label{T-main1}
Euler-Lagrange equations for \eqref{E-rwood} with respect to all variations of Riemannian metric are given on $U$ by
\begin{subequations}
\begin{eqnarray}\label{E-EL-1}
 && \Div(\Div( {\cal T}_{N,B}\cdot T)\cdot T) =0,\\
\label{E-EL-2}
 && \Div( {\cal T}_{N,B}\cdot T) - (T(\log k) - h_{N,N}){\cal T}_{N,B} = 0,\\
\label{E-EL-3}
 && (\tau - h_{B,N})\,{\cal T}_{N,B} = 0 ,
\end{eqnarray}
\end{subequations}
where $\sigma_1 = h(N,N) + h(B,B)$ is, as before, the trace of $h$.
\end{thm}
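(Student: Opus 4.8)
The plan is to compute the first variation $\frac{d}{dt}\big|_{t=0} J_{\cal D}(g_t)$ in full generality, collect the coefficient of each of the six independent components $S_{T,T}, S_{T,N}, S_{T,B}, S_{N,N}, S_{N,B}, S_{B,B}$ of $S=\dot g$, and set each coefficient to zero on $U$. Rather than differentiating the Reinhart–Wood integrand $-k^2(\tau-h_{B,N})\,{\rm d}V_g$ term by term (which is awkward because $k$, $N$, $B$, $\tau$ and $h_{B,N}$ all depend on $g$), I would exploit the identification already made in Lemma~\ref{L-02} and equation~\eqref{eq:rwood1}: $\gamma=-k^2(\tau-h_{B,N})\,{\rm d}V_g=\eta\wedge d\eta$, where $\eta=\iota_{\,T}\,d\omega$ and $\omega=g(T,\cdot)$. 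Since $T$ is fixed and ${\cal D}$ is fixed only in the foliated case, here only $g$ varies; but $\omega_t=g_t(T_t,\cdot)$ depends on $g_t$ through the normalized normal $T_t$ of ${\cal D}$. This means the variation of $\gamma$ splits naturally into a part coming from the change of the transverse line field $T$ (which I can feed into the already-established Corollary~\ref{C-02} formulas \eqref{eq:rel_gv1}–\eqref{eq:d_gv3}) and a part coming from the pointwise rescaling relating $\omega_t$ to a $g_t$-independent defining form.

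Concretely, I would decompose the metric variation as in the text into an \emph{adapted} $g^\bot$-variation ($S_{T,T}$ only), the remaining $g^\bot$-components $S_{T,N},S_{T,B}$, and the purely tangential part $S_{N,N},S_{N,B},S_{B,B}$, and treat each of the three independent ``directions'' of Proposition~\ref{prop1}/Corollary~\ref{C-02} accordingly. First, changing $S_{T,T}$ rescales $T$ to $e^{-f}T$ with $2f$ determined by $S_{T,T}$, so by \eqref{eq:rel_gv1} the corresponding Euler–Lagrange equation is precisely $\Div(\Div({\cal T}_{N,B}\cdot T)\cdot T)=0$, giving \eqref{E-EL-1}. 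Second, the off-diagonal $g^\bot$-components $S_{T,N},S_{T,B}$ tilt the normal direction, i.e. replace $T$ by $T+X$ with $X\in\mathfrak X_{\cal D}$ (case (ii)); by \eqref{eq:d_gv2} the vanishing of the $B$- and $N$-components of the $\mathfrak X_{\cal D}$-valued gradient gives $(T(k)-k\,h_{N,N}){\cal T}_{\cdot,B}=0$ and $k(\tau-h_{B,N}){\cal T}_{\cdot,N}=0$ as bilinear forms on ${\cal D}$; pairing with $N$ and $B$ and using ${\cal T}_{N,B}=-{\cal T}_{B,N}$ yields \eqref{E-EL-2} (after dividing the first relation by $k\neq0$ on $U$ and writing $T(k)/k=T(\log k)$) and \eqref{E-EL-3}. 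Third, the tangential components $S_{N,N},S_{N,B},S_{B,B}$ change the defining form $\omega\mapsto\omega+\mu$ with $\mu(T)=0$ (case (iii)); the claim is that formula \eqref{eq:d_gv3} produces \emph{no new} independent equations — the coefficients of $\mu^\sharp$ along $N$ and $B$, when re-expressed using $\psi_1=k(\tau-h_{B,N})$ and $\psi_2=T(k)-k\,h_{N,N}$ and the already-derived \eqref{E-EL-1}–\eqref{E-EL-3}, vanish identically. Assembling the three cases gives the system \eqref{E-EL-1}–\eqref{E-EL-3}, and conversely any metric satisfying these three is critical for every variation.

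The main obstacle I anticipate is the bookkeeping in the third case: showing that the tangential metric variations contribute nothing beyond \eqref{E-EL-1}–\eqref{E-EL-3}. This requires carefully translating a tangential perturbation $S$ of $g$ on ${\cal D}$ into the corresponding 1-form $\mu$ with $\mu(T)=0$ (the map is not the identity — it involves how the orthogonal complement of ${\cal D}$, hence $\omega$, moves when the inner product on ${\cal D}$ changes, while ${\cal D}=\ker\omega$ itself stays put), and then checking that the resulting gradient in \eqref{eq:d_gv3} is a combination of $\psi_1,\psi_2,\Div({\cal T}_{N,B}\cdot T)$ and their $T$-derivatives that dies once \eqref{E-EL-2} and \eqref{E-EL-3} hold. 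A secondary technical point is handling the region $k=0$: off $U$ the integrand is set to zero, so the Euler–Lagrange equations are only asserted on $U$, and one must check that variations do not create boundary contributions at $\partial U$ — which is fine since the relevant gradients carry factors of $k$ or $\Div({\cal T}_{N,B}\cdot T)$ that are controlled there. Everything else is a routine, if lengthy, application of the Divergence Theorem to the formulas already proved in Corollary~\ref{C-02} and Proposition~\ref{prop3}.
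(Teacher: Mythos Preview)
Your overall three-case split matches the paper, but two of the cases are mishandled.

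\textbf{Case 2 (the $S_{T,N},S_{T,B}$ components).} You drop a term from the gradient in \eqref{eq:d_gv2}. The ${\cal D}$-valued gradient there is
\[
 k\,\Div({\cal T}_{N,B}\cdot T)\,N \;-\;(T(k)-k\,h_{N,N})\,({\cal T}_{\cdot,B})^\sharp \;+\; k(\tau-h_{B,N})\,({\cal T}_{\cdot,N})^\sharp ,
\]
and the \emph{first} summand is purely in the $N$-direction. The $N$-component of the full gradient is therefore
$k\,\Div({\cal T}_{N,B}\cdot T)-(T(k)-k\,h_{N,N}){\cal T}_{N,B}$, whose vanishing is exactly \eqref{E-EL-2}. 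Your claimed relation ``$(T(k)-k\,h_{N,N}){\cal T}_{\cdot,B}=0$'' omits this first summand and would yield only $(T(\log k)-h_{N,N}){\cal T}_{N,B}=0$, which is not \eqref{E-EL-2}. (The $B$-component, by contrast, really is $-k(\tau-h_{B,N}){\cal T}_{N,B}$, giving \eqref{E-EL-3} correctly.)

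\textbf{Case 3 (the tangential components $S_{N,N},S_{N,B},S_{B,B}$).} You identify this with case~(iii), i.e.\ $\omega\mapsto\omega+\mu$ with $\mu(T)=0$, and then set yourself the task of showing \eqref{eq:d_gv3} gives nothing new. This is a misidentification: in case~(iii) the distribution $\ker\omega$ changes, whereas here ${\cal D}$ is held fixed. When only the metric on ${\cal D}$ varies (so $S(T,\cdot)=0$), the vector $T$ remains the $g_t$-unit $g_t$-normal to ${\cal D}$, hence $\omega_t=g_t(T,\cdot)$ satisfies $\omega_t(T)=1$ and $\omega_t|_{\cal D}=0$, i.e.\ $\omega_t=\omega$. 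Then $\eta_t=\iota_T\,d\omega_t=\eta$ is constant in $t$, and $J_{\cal D}$ is literally unchanged: there is nothing to check and no appeal to \eqref{eq:d_gv3} is needed. Your ``main obstacle'' does not exist; the correct argument here is one line.
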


\begin{proof}
Arbitrary variation of a Riemannian metric can be decomposed into three cases:

1.~the metric varies along $T$ only;

2.~variations preserve the metric on ${\cal D}$ and $T$ (but disturb their orthogonality);

3.~the metric varies along ${\cal D}$ only (variations preserve the unit normal to~${\cal D}$).

\noindent
Thus, we divide the components of $S$ into three sets:
$\{S_{T,T}\}$, $\{S_{T,N},S_{T,B}\}$ and $\{S_{N,N},S_{N,B},S_{B,B}\}$.
%Since \eqref{E-rwood} is constant for variations of metric along ${\cal D}$ only, the Euler-lagrange equations

%Case 1. Metric varies along ${\cal D}$ only. By Proposition~\ref{prop1}(i), the functional \eqref{E-rwood} is constant.

Case 1.
Here, $T_t=e^{-f_t}\,T$ is the unit normal to ${\cal D }$ with respect to $g_t$ for some smooth function $f_t$ with $f_0=0$.
Differentiating $g_t(T_t,T_t)=1$ at $t=0$ we obtain $g(\dot f\,T,T)+S_{T,T}=0$. Hence,
%\[
 $\dot f = -S_{T,T}/2$.
%\]
 By Corollary~\ref{C-02}(i), we have
\begin{eqnarray*}
 \frac{{\rm d}}{{\rm dt}}\,J_{{\cal D}}(g_t)_{\,|\,t=0} \eq \int_M (\eta_t\wedge d\eta_t)'_{|\,t=0}
 = 2\int_M T(\dot f)\Div( {\cal T}_{N,B}\cdot T)\,{\rm d}V_g \\
 \eq -2\int_M \Div(\Div( {\cal T}_{N,B}\cdot T)\cdot T) S_{T,T}\,{\rm d}V_g.
\end{eqnarray*}
Thus, the Euler-Lagrange equations have the form of \eqref{E-EL-1}.
Note that $T((\nabla_T\,{\cal T})_{N,B})=(\nabla^2_{T,T}\,{\cal T})_{N,B}$.

Case 2.
Now, $T_t=T+X_t$ is the unit normal to ${\cal D}$ with respect to $g_t$
for some vector field $X_t\in\mathfrak{X}_{\cal D}$ with $X_0=0$.
Differentiating $g_t(T+X_t,N)=0$ at $t=0$ we obtain $g(\dot X,N)=-\dot g(T,N)=-S_{T,N}$. Similarly, we get $g(\dot X,B)=-S_{T,B}$.
Hence, $\dot X = -S_{T,N}N -S_{T,B}B$.
By Corollary~\ref{C-02}(ii) and using equalities ${\cal T}_{\dot X,N} = S_{T,B}{\cal T}_{N,B}$
 and ${\cal T}_{\dot X,B} = -S_{T,N}{\cal T}_{N,B}$, we find
\begin{eqnarray*}
 \frac{{\rm d}}{{\rm dt}}\,J_{{\cal D}}(g_t)_{\,|\,t=0}
 %=\int_M (\eta_t\wedge d\eta_t)'_{|\,t=0}
 % = 2\int_M d\eta\wedge(\iota_{\dot X}\,d\omega) \\ &&
 = 4\int_M \Big\{ k\,\dot g(X,N)\Div( {\cal T}_{N,B}\cdot T) -(T(k)-k h_{N,N}){\cal T}_{\dot X,B}
 +k(\tau - h_{B,N})\,{\cal T}_{\dot X,N}\Big\}\,{\rm d}V_g \\
 = 4\int_M \!\Big\{ \big((T(k)-k h_{N,N}){\cal T}_{N,B} -k\,\Div( {\cal T}_{N,B}\cdot T)\big)S_{T,N}
 -k(\tau - h_{B,N})\,{\cal T}_{N,B}\,S_{T,B}\Big\}\,{\rm d}V_g .
\end{eqnarray*}
Thus, the Euler-Lagrange equations (equivalent to vanishing of $S_{T,N},S_{T,B}$ components of the integrand) have the form of \eqref{E-EL-2} and \eqref{E-EL-3}.

Case 3. Since metric $g\in{\rm Riem}(M,{\cal D},T)$ can vary along ${\cal D}$ only, $\omega$ and $T$ do not change.
By~Lemma~\ref{E-eta}, $\eta$ does not vary; thus, by \eqref{eq:rwood1}, the functional $J_{\cal D}$ is constant.
These variations do not provide us with new Euler-Lagrange~equations.
\end{proof}

\begin{cor}\label{C-geod} Let $T$ be a geodesic vector field on $(M^3,g)$ and a normal plane field ${\cal D}$ be orthogonal to $T$.
 Then $g$ is a critical point for $J_{\cal D}$, but $g$ is not an extremum.
\end{cor}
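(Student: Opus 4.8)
The plan is to treat criticality and non-extremality separately. For criticality, note first that a geodesic vector field has $\nabla_T\,T=0$, hence the curvature $k$ of the $T$-curves vanishes on all of $M$; by Lemma~\ref{L-02} this forces $\eta=(\nabla_T\,T)^\flat=0$, and therefore $d\eta=0$. Writing $J_{\cal D}(g_t)=\gv({\cal D},T_t)$ with $T_t$ the unit $g_t$-normal to ${\cal D}$ (so that $\omega_t=e^{f_t}\omega$ for functions $f_t$ with $f_0=0$, and $T_t=e^{-f_t}T+X_t$ with $X_t\in\mathfrak{X}_{\cal D}$), the first variation formula \eqref{E-gv-omegat} gives $\frac{d}{dt}J_{\cal D}(g_t)\big|_{t=0}=2\int_M\dot\eta\wedge d\eta=0$. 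Equivalently, the Euler--Lagrange system \eqref{E-EL-1}--\eqref{E-EL-3} of Theorem~\ref{T-main1} is imposed only on $U=k^{-1}(\RR\smallsetminus\{0\})$, which here is empty. Hence $g$ is critical.

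For non-extremality I would compute the second variation. Since $d\eta=0$ at $g$, formula \eqref{E-gv-omegatt} collapses to $\frac{d^2}{dt^2}J_{\cal D}(g_t)\big|_{t=0}=2\int_M\dot\eta\wedge d\dot\eta$, so it suffices to produce metric variations making this integral positive, and others making it negative. I restrict to variations with $S_{T,T}=S_{N,N}=S_{N,B}=S_{B,B}=0$ and only $S_{T,N},S_{T,B}$ nonzero: these keep the metric on ${\cal D}$ fixed and force $\dot f=0$, realizing $\dot T=\dot X\in\mathfrak{X}_{\cal D}$ arbitrary (via $g(\dot X,N)=-S_{T,N}$, $g(\dot X,B)=-S_{T,B}$) with $\dot\omega=0$ --- precisely the type (ii) variations of Lemma~\ref{P-03}, for which $\dot\eta=\iota_{\dot X}\,d\omega$. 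Thus the second variation equals $2\int_M(\iota_{\dot X}\,d\omega)\wedge d(\iota_{\dot X}\,d\omega)$, and the goal reduces to showing this quadratic form in $\dot X$ is indefinite; in the language of Lemma~\ref{P-03}, condition \eqref{E-condA}$_2$ and its sign reversal both fail.

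For the local computation I would pick coordinates $(x_1,x_2,z)$ around a point with $T=\partial_z$; because the $T$-curves are geodesics the coefficients $g_{3i}$ are independent of $z$, so $\omega=P_1(x_1,x_2)\,dx^1+P_2(x_1,x_2)\,dx^2+dz$ with $z$-independent $P_i$, the distribution ${\cal D}$ is spanned by $X_i=\partial_i-P_i\partial_z$, and $d\omega=Q\,dx^1\wedge dx^2$ with $Q=P_{2,1}-P_{1,2}$ also independent of $z$. Here one uses that ${\cal D}$ is non-integrable somewhere --- equivalently $\omega\wedge d\omega=Q\,dx^1\wedge dx^2\wedge dz\not\equiv0$ --- so that $Q$ is nonzero on a nonempty open set $V$ (for integrable ${\cal D}$ the functional $J_{\cal D}$ is constant, see the discussion around \eqref{eq:rwood1}, so the statement concerns the non-integrable case). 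Taking $\dot X=p_1X_1+p_2X_2$ with $p_1,p_2$ supported in $V$, a short wedge-product computation yields $(\iota_{\dot X}\,d\omega)\wedge d(\iota_{\dot X}\,d\omega)=Q^2(p_2\,p_{1,3}-p_1\,p_{2,3})\,dx^1\wedge dx^2\wedge dz$, whence the second variation is $2\int_M Q^2(p_2\,p_{1,3}-p_1\,p_{2,3})\,dx^1\wedge dx^2\wedge dz$. This is visibly sign-indefinite: with $p_1=\chi(x_1,x_2)\,\psi(z)$ and $p_2=\chi(x_1,x_2)\,\rho(z)$, $\chi$ a bump supported in $V$, it equals $\big(\int Q^2\chi^2\,dx^1\,dx^2\big)\big(\int(\rho\psi'-\psi\rho')\,dz\big)$, whose first factor is a fixed positive number and whose second factor can be made $+1$ or $-1$ by choosing compactly supported $\psi,\rho$ appropriately (e.g. truncations of $(\sin z,\cos z)$ versus $(\cos z,\sin z)$). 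Therefore $g$ is neither a local minimum nor a local maximum of $J_{\cal D}$.

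The substantive point --- and essentially the only place a real calculation is needed --- is the wedge-product identity above in these geodesic normal coordinates; the $z$-independence of $\omega$ forced by $\nabla_T\,T=0$ makes it considerably shorter than the corresponding contact computation in the proof of Theorem~\ref{prop2a}. The remaining work is bookkeeping: verifying that the metric-variation second variation is genuinely $2\int_M\dot\eta\wedge d\dot\eta$ (so that $\ddot f,\ddot X$ play no role, by $d\eta=0$), that arbitrary $S_{T,N},S_{T,B}$ realize an arbitrary $\dot X\in\mathfrak{X}_{\cal D}$, and flagging explicitly that the non-extremality conclusion requires ${\cal D}$ to be non-integrable.
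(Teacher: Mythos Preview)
Your argument is correct. For criticality you proceed essentially as the paper does (both reduce to $\eta=0\Rightarrow d\eta=0$, hence \eqref{E-gv-omegat} vanishes), but for non-extremality you take a genuinely different route. The paper stays in the Frenet-frame formalism of Section~\ref{sec:RW-form}: it invokes Lemma~\ref{P-03}, computes $(\iota_{\dot X}d\omega)\wedge d(\iota_{\dot X}d\omega)$ in terms of $k,\tau,{\cal T}$, and then passes to the limit $k\to 0$ (the frame $(N,B)$ being undefined at $k=0$) to obtain the expression $2\sin(2\phi)\,T\big(({\cal T}_{N,B})^2\big)$, whose sign-indefiniteness requires $T({\cal T}_{N,B})\ne 0$ at some point. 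Your approach instead straightens $T$ to $\partial_z$, exploits the fact that $\nabla_T T=0$ forces the coefficients $P_i$ (hence $Q=P_{2,1}-P_{1,2}$) to be $z$-independent, and reduces the second variation to the transparent Wronskian-type integrand $Q^2(p_2\,p_{1,3}-p_1\,p_{2,3})$. This avoids the somewhat delicate limiting argument and needs only the minimal hypothesis that ${\cal D}$ be non-integrable somewhere ($Q\not\equiv 0$), whereas the paper's endpoint formula is inconclusive when $T({\cal T}_{N,B})\equiv 0$; in your coordinates ${\cal T}_{N,B}=-Q/(2\sqrt{\det g})$, so this can indeed happen when $\det g$ is $z$-independent even though $Q\ne 0$.

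One small point worth tightening: your ``truncations of $(\sin z,\cos z)$'' suggestion is heuristic, since cutting off alters the Wronskian. A clean replacement is to take $\psi$ any compactly supported bump and set $\rho=\psi'$; then $\int(\rho\psi'-\psi\rho')\,dz=2\int(\psi')^2\,dz>0$, and swapping $\psi\leftrightarrow\rho$ flips the sign. With this adjustment your argument is complete.
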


\begin{proof}
Since $T$ is geodesic field, we have $k=0$. Hence \eqref{E-EL-2} and \eqref{E-EL-3} are satisfied.
By \eqref{E-d-eta2}, the Euler-Lagrange equation \eqref{E-EL-1} is satisfied, and the first claim follows.

 By \eqref{E-eta0}, $\eta=0$, hence $d\eta=0$, and we can apply Lemma~\ref{P-03}.
Assume $k\ne0$ at a point $x$ and then let $k\to0$.
Using \eqref{E-dd}, rewrite \eqref{E-condA}$_1$ as
\[
 \omega\wedge d\omega = -2\,{\cal T}_{N,B}\ge0.
\]
The equality \eqref{E-condA}$_2$ requires computation for $\tau=\iota_{\dot X}\,d\omega$ and $d\tau=d\iota_{\dot X}\,d\omega$:
\begin{eqnarray*}
 && (\tau\wedge d\tau)(T,N,B) =
       d\omega(\dot X,T)[B(d\omega(\dot X,N))+N(d\omega(\dot X,B))]\\
 && +\,d\omega(\dot X,N)[B(d\omega(\dot X,T))+T(d\omega(\dot X,B))]
    +  d\omega(\dot X,B)[N(d\omega(\dot X,T))+T(d\omega(\dot X,N))] \\
 && = 2\,k\,g(\dot X,N)[B({\cal T}_{\dot X,N})+N({\cal T}_{\dot X,B})] \\
 && +\,2\,{\cal T}_{\dot X,N}[B( k\,g(\dot X,N))+2 T({\cal T}_{\dot X,B})]
  +2\,{\cal T}_{\dot X,B}[N( k\,g(\dot X,B))+2 T({\cal T}_{N,B})].
\end{eqnarray*}
Letting $k\to0$ and $\dot X=\cos\phi N+\sin\phi B$ we get
\[
 (\tau\wedge d\tau)(T,N,B) \to 2\sin(2\,\phi)\,T(({\cal T}_{N,B})^2),
\]
which can be either positive or negative for different $\phi$ when $T({\cal T}_{N,B})\ne0$,
and just \eqref{E-condA}$_2$ is not satisfied.
\end{proof}

\begin{cor} Let $g$ be a critical metric for $J_{\cal D}$ whith ${\cal D}$ being  nowhere integrable.
Then the distribution ${\rm Span}(T,B)$ of rectifying planes to $T$-curves is integrable (on $U$) and $\eta\wedge d\eta =0$, hence
$J_{\cal D}(g)=0$.
\end{cor}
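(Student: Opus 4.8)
The plan is to read the conclusion off the Euler--Lagrange system \eqref{E-EL-1}--\eqref{E-EL-3} of Theorem~\ref{T-main1}, using the hypothesis that ${\cal D}$ is nowhere integrable only to cancel the factor ${\cal T}_{N,B}$. First I would note that on $U$ the integrability tensor ${\cal T}$ restricts to a skew-symmetric bilinear form on the $2$-plane ${\cal D}=\mathrm{Span}(N,B)$, hence at each point it is completely determined by the single number ${\cal T}_{N,B}$. If ${\cal T}_{N,B}$ vanished on a nonempty open subset of $U$, then ${\cal T}\equiv0$ there and, by the Frobenius theorem (cf. \cite{cc}), ${\cal D}$ would be integrable on that set, contrary to the assumption. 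Thus ${\cal T}_{N,B}\neq0$ at every point of $U$, and dividing \eqref{E-EL-3} by this nonzero factor gives $\tau-h_{B,N}=0$ on $U$.

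Next I would invoke the observation recorded just before Theorem~\ref{T-main1}: the equality $\tau-h_{B,N}=0$ is exactly the Frobenius (integrability) condition for the plane field $\mathrm{Span}(T,B)$ built of the rectifying planes of the $T$-curves. This proves the first assertion, namely that $\mathrm{Span}(T,B)$ is integrable on $U$.

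For the remaining assertions I would return to \eqref{eq:rwood1}, $\eta\wedge d\eta=-k^2(\tau-h_{B,N})\,{\rm d}V_g$. On $U$ the factor $\tau-h_{B,N}$ vanishes by the step above; on $M\smallsetminus U$ we have $k=0$, hence $\nabla_T T=0$ and so, by \eqref{E-eta0}, $\eta=0$, which again forces $\eta\wedge d\eta=0$ since $\eta$ is a factor. Therefore $\eta\wedge d\eta=0$ on all of $M$; equivalently, the integrand $-k^2(\tau-h_{B,N})\,{\rm d}V_g$ of $J_{\cal D}$ vanishes identically. Integrating over $M$ (or over the exhausting relatively compact domains $\Omega$ in the case where the integral does not converge) yields $J_{\cal D}(g)=\int_M\eta\wedge d\eta=0$.

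The argument is short because the heavy lifting was done in deriving \eqref{E-EL-1}--\eqref{E-EL-3}; the only point needing a little care is the passage from ``nowhere integrable'' to ``${\cal T}_{N,B}\neq0$ pointwise on $U$'', handled by the Frobenius remark above, together with the bookkeeping that the $T$-curve data $N,B,\tau,h_{B,N}$ are defined only on $U$ while the conclusion $\eta\wedge d\eta=0$ is nonetheless global, because $\eta$ itself vanishes wherever $k=0$.
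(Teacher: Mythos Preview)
Your proof is correct and follows essentially the same route as the paper's: both use the Euler--Lagrange equation \eqref{E-EL-3} together with ${\cal T}_{N,B}\neq 0$ (from nowhere-integrability) to obtain $\tau-h_{B,N}=0$ on $U$, and then read off the integrability of ${\rm Span}(T,B)$. The only difference is in the final step: you conclude $\eta\wedge d\eta=0$ directly from the Reinhart--Wood identity \eqref{eq:rwood1}, whereas the paper passes through Lemma~\ref{L-02} to get $d\eta(T,B)=0$, hence $N^\flat\wedge d\eta=0$, hence $d\eta=N^\flat\wedge\alpha$, and then $\eta\wedge d\eta=kN^\flat\wedge N^\flat\wedge\alpha=0$; your shortcut is cleaner, while the paper's route exposes a bit more structure of $d\eta$.
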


\begin{proof} Recall that $k\ne0$ on $U$. Since ${\cal T}_{N,B}\ne0$, by \eqref{E-EL-3} and Lemma~\ref{L-02} we obtain $d\eta(T,B)=0$.
Hence $N^\flat\wedge d\eta=0$. By this, $g([T,B],N)=0$ (hence ${\rm Span}(T,B)$ is integrable on $U$)
and $d\eta = N^\flat\wedge\alpha$ for some 1-form $\alpha$.
The last equality yields $\eta\wedge d\eta =k N^\flat\wedge N^\flat\wedge\alpha=0$.
\end{proof}

\begin{example}\rm
 By Theorem~\ref{T-main1}, if ${\cal D}$ is integrable, then all the metrics on $M$ are critical for the functional \eqref{E-rwood}, and $J_{\cal D}$ is constant.
There exist metrics on 3-manifolds endowed with non-integrable plane fields,
which are critical for \eqref{E-rwood}.
Indeed, let ${\cal D}$ be the plane field orthogonal to Hopf circles on $S^3$ with the standard metric $g$.
Since $k=0$ on $S^3$, the metric $g$ is critical for $J_{\cal D}$.
\end{example}

\begin{example}\rm
 Recall that a Riemannian metric $g$ on a contact manifold $(M^{3},\omega)$ with the Reeb field $T$
 (see Theorem~\ref{prop2a}) is \textit{associated} if there exists a $(1,1)$-tensor $\phi$ such that for all $X,Y \in {\mathfrak X}_M$
\begin{equation*}
%\label{E-contact-metric1}
 \omega(X) = g(T,X) , \quad d\omega(X,Y) = g(X,\phi(Y)),\quad \phi^2=-\id +\,\omega\otimes T .
\end{equation*}
The above $(\phi, T, \omega, g)$ is called a \textit{contact metric structure} on $M$, see \cite{b2010}.
The integral curves of $T$ are geodesics for the contact metric structure.
By Corollary~\ref{C-geod}, contact metrics on a contact manifold $(M^{3},\omega)$ are critical for the functional $J_{\cal D}$.
\end{example}

%for a fixed plane field
\section{Variable Randers metric}

Recall that a~{Finsler structure} on a manifold $M$ is a family of Minkowski norms $F_p$ in tangent spaces $T_pM$ depending smoothly on a point $p\in M$. The symmetric bilinear form
\[
 g_y(u,v)=\frac12\,\frac{\partial^2}{\partial s\partial t}F^2(y+su+tv)_{|s=t=0},\quad y\ne0,
\]
is positive definite.
An important non-Riemannian quantity (called the \textit{Cartan~torsion}) is the symmetric trilinear form
%\[
 $C_y(u,v,w)=\frac14\,\frac{\partial^3}{\partial r\,\partial s\,\partial t}\,F^2(y+ru+sv+tw)_{\,|\,r=s=t=0}\ (y\ne0)$.
%\]

Let a Finsler manifold $(M^3,F)$ be endowed with a transversely oriented plane field~${\cal D}$.
For any $p\in M$, there are two normal directions to ${\cal D}_p$, opposite when $F$ is {reversible}, see \cite{sh2}.
Let $T$ be a unit vector field orthogonal to ${\cal D}$.
Define a particular Riemannian metric $g$ on $M$, see \cite{rw3}, which is compatible with $({\cal D},T)$:
\[
 g:=g_T .
\]
 The~{Chern connection} $D^T$ is torsion free and 'almost metric'; it is determined~by
\begin{equation}\label{E-D-connect}
  g(D^T_u\,v, w) - g(\nabla_u\,v, w) =  C_T(D^T_w\,T,u,v) - C_T(D^T_u\,T,v,w) - C_T(D^T_v\,T,u,w),
\end{equation}
 where $u,v,w\in {\mathfrak X}_M$ and $\nabla$ is the Levi-Civita connection of $g$.

 A~codimension one foliation $\calf$ (with  the distribution ${\cal D}=T\calf$ being integrable)
is said to be \textit{Riemannian} if $T$-curves are $D^T$-geodesics, that is if  $D^T_T\,T=0$.

Proposition~\ref{cor2} can be extended for Finsler metrics as follows.

\begin{cor}
%\label{cor2}
Let $\calf$ be a 2-dimensional transversely oriented Riemannian foliation of $M^3$ with a Finsler metric $F$.
Then a pair $(T\calf,T)$, where $T$ a unit $F$-normal to $\calf$,
is a critical point for Godbillon-Vey integrals varying over all plane fields.
\end{cor}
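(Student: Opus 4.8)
The plan is to reduce the Finsler statement to the already-established Riemannian one (Proposition~\ref{cor2}) by working with the associated Riemannian metric $g=g_T$ and exploiting the Chern connection. The key observation is that the Godbillon-Vey integrand $\eta\wedge d\eta$, as shown in Lemma~\ref{L-02}, depends only on the data $(\omega,T)$ and not at all on the choice of compatible metric: indeed $\eta=\iota_{\,T}\,d\omega={\cal L}_{\,T}\,\omega$ by Lemma~\ref{L-eta}. So it suffices to show that, for a Riemannian foliation in the Finsler sense, the hypotheses of Proposition~\ref{cor2} are met when we use $g=g_T$ as the compatible metric, i.e. that $T$ is a geodesic vector field for the Levi-Civita connection $\nabla$ of $g_T$, equivalently that $\nabla_T\,T=0$, equivalently (by Lemma~\ref{L-02}) that $k=0$ and hence $\eta=0$.

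First I would recall that $\calf$ being Riemannian in the Finsler sense means, by the definition just given in the excerpt, that $D^T_T\,T=0$, where $D^T$ is the Chern connection associated to $T$. Second, I would specialize the defining relation \eqref{E-D-connect} by putting $u=v=w=T$. The right-hand side becomes $C_T(D^T_T\,T,T,T) - C_T(D^T_T\,T,T,T) - C_T(D^T_T\,T,T,T) = -C_T(D^T_T\,T,T,T)$, and since $D^T_T\,T=0$ this is zero; moreover one always has $C_y(y,\cdot,\cdot)=0$ for the Cartan torsion (it is the second derivative in radial directions of the $1$-homogeneous-squared $F^2$, so the radial argument kills it), which gives a second, independent reason the correction term vanishes. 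Hence $g(D^T_T\,T - \nabla_T\,T,\,w)=0$ for all $w$, so $\nabla_T\,T = D^T_T\,T = 0$. Therefore $T$ is a geodesic vector field for $(M^3,g_T)$.

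Now I would simply invoke Proposition~\ref{cor2} with the compatible Riemannian metric $g=g_T$: since $T$ is a geodesic field, $(T\calf,T)$ is critical for the Godbillon-Vey integral \eqref{E-gv-invar} among all plane fields, and $\gv(T\calf,T)=0$. Because $\eta\wedge d\eta$ is computed from $(\omega,T)$ alone, criticality with respect to variations of the plane field is a statement about that $3$-form and its variation \eqref{E-gv-omegat}, independent of which compatible metric one names; so the conclusion transfers verbatim to the Finsler setting. The only point requiring care — and the one I'd expect to be the mild ``obstacle'' — is making the identification $D^T_T\,T=0 \iff \nabla_T\,T=0$ cleanly, i.e. checking that the correction terms in \eqref{E-D-connect} genuinely drop out along $T$; this is where the homogeneity property $C_T(T,\cdot,\cdot)=0$ of the Cartan torsion does the work, together with the definition of ``Riemannian'' supplied in the text.
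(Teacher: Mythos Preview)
Your approach is exactly the paper's: reduce to Proposition~\ref{cor2} by showing $D^T_T T=\nabla_T T$ from \eqref{E-D-connect} together with $C_T(T,\cdot,\cdot)=0$, and then invoke the Riemannian result for $g=g_T$. One small slip: to obtain $g(D^T_T T-\nabla_T T,\,w)=0$ \emph{for all} $w$ you must specialize \eqref{E-D-connect} with $u=v=T$ and $w$ arbitrary (not $u=v=w=T$); then each Cartan-torsion term on the right carries a $T$ in one slot and vanishes by $C_T(T,\cdot,\cdot)=0$, yielding $D^T_T T=\nabla_T T$ directly without first assuming $D^T_T T=0$.
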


\begin{proof}
Observe that $k^T=F(D^T_T\,T)$ is the curvature of $T$-curves on $(M,F)$.
By~(\ref{E-D-connect}), $D^T_T\,T = \nabla_T\,T$; thus,
$T$ is a geodesic vector field for $F$ if and only if it is geodesic for~$g$.
Hence, the claim follows from Proposition~\ref{cor2} for metric $g$.
\end{proof}

Important 'computable' examples of Minkowski norms are  {\it Randers norms}, that is  Euclidean norms shifted by a small vector.
These norms were introduced in \cite{ra} by a physicist G.\,Randers to consider the unified field theory.
 Given a Riemannian metric $a(\cdot\,,\cdot)=\<\cdot\,,\cdot\>$ on $M^3$ with Euclidean norm $\alpha(y):=\sqrt{\<y\,,y\>}$
and a~linear form $\beta$ on $M$ of norm $b:=\alpha(\beta)<1$, the Randers metric is defined by $F=\alpha+\beta$.
Set $c=\sqrt{1-b^2}$.
Let ${\cal D}$ be a plane field on $M$ and $\bar T$ an $a$-unit $a$-normal to~${\cal D}$.
For simplicity assume that $\beta^\sharp\in{\cal D}$, i.e.
\begin{equation*}
%\label{E-bT}
 \beta(\bar T)=0.
\end{equation*}

\begin{lem}[see \cite{rw3}]\label{L-Frenet}
Frenet frames $\{T,N,B\}$ and $\{\bar T,\bar N,\bar B\}$ of normal curves in metrics $g$ and $a$ $($defined on an open set
where the curvatures $k$ and $\bar k$ are nonzero$)$ are related as follows:
\begin{eqnarray*}
 && T = c^{-1}\,\bar T-c^{-2}\beta^\sharp,\quad
    c^2k N = \bar k\bar N -c^{-1}\bar\nabla^\top c +c^{-2}\beta(\bar k\bar N-c^{-1}\bar\nabla^\top c)\beta^\sharp.
\end{eqnarray*}
\end{lem}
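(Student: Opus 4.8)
\textbf{Proof plan for Lemma~\ref{L-Frenet}.}
The plan is to convert the algebraic relation $F=\alpha+\beta$ into the Chern-vs-Levi-Civita data via \eqref{E-D-connect}, and then read off the Frenet frames from the curvature vectors. First I would record the basic linear-algebra facts about $g=g_T$: since $F=\alpha+\beta$ with $\beta(\bar T)=0$, a direct computation of $g_y(u,v)=\tfrac12\,\partial_s\partial_t F^2(y+su+tv)|_{s=t=0}$ at $y=\bar T$ shows how $g$ differs from $a$; in particular $g$ rescales the $\bar T$-direction by a factor involving $c=\sqrt{1-b^2}$ while leaving (up to a shift) the behaviour on ${\cal D}$ controlled by $\beta^\sharp\in{\cal D}$. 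From this I would identify the $g$-unit $g$-normal $T$ to ${\cal D}$: it must be proportional to the $g$-orthogonal complement of ${\cal D}$, and imposing $g(T,T)=1$ and $\omega(T)=1$ (the normalization used throughout the paper) pins it down, yielding the first formula $T=c^{-1}\bar T-c^{-2}\beta^\sharp$. One checks $T\in(\ker\omega)^{\perp_g}$ and the normalization directly.

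Next I would compute the curvature vector $\nabla_T T$ of $T$-curves in the metric $g$. By \eqref{E-D-connect} one has $g(D^T_u v,w)-g(\nabla_u v,w)=C_T(D^T_w T,u,v)-C_T(D^T_u T,v,w)-C_T(D^T_v T,u,w)$; setting $u=v=w=T$ and using the fact that the Cartan torsion $C_T(\cdot,\cdot,\cdot)$ kills the $T$-direction in one slot (homogeneity: $C_y(y,\cdot,\cdot)=0$) gives $D^T_T T=\nabla_T T$, so the curvature vector $k^T N^T$ of the Chern connection agrees with $\nabla_T T=kN$. Then I would express $\nabla_T T$ in terms of the $a$-data. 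Writing $T=c^{-1}\bar T-c^{-2}\beta^\sharp$ and differentiating along $T$ using the Koszul-type formula relating $\nabla$ (Levi-Civita of $g$) to $\bar\nabla$ (Levi-Civita of $a$) — itself governed by the difference tensor of the two metrics, which is built from $\bar\nabla\beta$, $b$, and $\beta^\sharp$ — I would collect terms, project onto ${\cal D}$, and normalize to get the unit normal $N$, arriving at $c^2 k N=\bar k\bar N-c^{-1}\bar\nabla^\top c+c^{-2}\beta(\bar k\bar N-c^{-1}\bar\nabla^\top c)\beta^\sharp$, where $\bar\nabla^\top$ denotes the ${\cal D}$-tangential gradient and $c$ varies over $M$.

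The main obstacle I anticipate is the bookkeeping in the second formula: one must carefully track (a) the difference tensor $\nabla-\bar\nabla$, which for a conformal-plus-drift change of metric has several terms, (b) the derivative of the scalar $c=\sqrt{1-b^2}$ and of $\beta^\sharp$ along $T$, and (c) the decomposition into $\bar T$- and ${\cal D}$-components, keeping only the ${\cal D}$-part that defines $N$. The term $\beta(\bar k\bar N-c^{-1}\bar\nabla^\top c)\beta^\sharp$ is exactly the feedback of the drift $\beta$ on the normal direction, and getting its coefficient $c^{-2}$ right is the delicate point; I would double-check it by testing on the Riemannian case $\beta=0$ (where $c=1$ and the formula must degenerate to $T=\bar T$, $kN=\bar k\bar N$) and on the case $\bar k$ constant. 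Since the statement is quoted from \cite{rw3}, I would also note that the full computation is carried out there and only sketch the two normalization checks here.
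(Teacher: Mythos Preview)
The paper does not prove this lemma; it is quoted from \cite{rw3} and stated without proof. So there is nothing to compare against, and your plan should be judged on its own merits.

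Your outline is sound and would work, but there is one logical slip worth flagging. You propose to compute $g_y$ at $y=\bar T$ and then ``identify the $g$-unit $g$-normal $T$''. The order is reversed: $T$ is \emph{defined} as the $F$-unit $F$-normal to ${\cal D}$, and only then is $g:=g_T$ formed. Concretely, for Randers one has $g_T(T,u)=F(T)\big(a(T,u)/\alpha(T)+\beta(u)\big)$; imposing $F(T)=1$ and $g_T(T,u)=0$ for $u\in{\cal D}$ (together with $\beta(\bar T)=0$) gives $T=c^{-1}\bar T-c^{-2}\beta^\sharp$ directly, and then \eqref{E-c-value0} records $g_T$. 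In practice this circularity is harmless because the $F$-normal and the $g_T$-normal coincide, but the write-up should reflect the correct logical order.

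For the second formula your plan is correct but slightly roundabout. The appeal to \eqref{E-D-connect} and $C_T(T,\cdot,\cdot)=0$ to get $D^T_TT=\nabla_TT$ is valid, yet unnecessary: since $g=g_T$ is already an honest Riemannian metric with explicit form \eqref{E-c-value0}, one can compute $\nabla_TT$ straight from the Koszul formula for $g$ (or from the difference tensor $\nabla-\bar\nabla$, which is built algebraically out of $\bar\nabla\beta$, $\bar\nabla b$ and the data in \eqref{E-c-value0}). Differentiating $T=c^{-1}\bar T-c^{-2}\beta^\sharp$ along itself with $\bar\nabla$, then correcting by the difference tensor and projecting onto ${\cal D}$, yields the stated expression; the $c^{-2}\beta(\cdots)\beta^\sharp$ term is exactly the correction needed to pass from $a$-orthogonal to $g$-orthogonal components inside ${\cal D}$, cf.\ \eqref{E-c-value1}. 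Your sanity checks ($\beta=0\Rightarrow c=1$, $T=\bar T$, $kN=\bar k\bar N$) are the right ones.
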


In the case of $F=\alpha+\beta$, the~new metric $g=g_T$ has the form
\begin{equation}\label{E-c-value0}
 g = (1+\beta(n))\,a +\beta^\flat\otimes\beta^\flat -\beta(n)\,n^\flat\otimes n^\flat +\beta^\flat\otimes n^\flat +n^\flat\otimes\beta^\flat,
\end{equation}
where the vector field $n=c^2 T$ has the properties $\<n,n\>=1$ and $\beta(n)=-b^2$.
In particular,
\begin{equation}\label{E-c-value1}
 g(u,v) = c^2(\<u,v\> -\beta(u)\,\beta(v))\quad (u,v\in{\cal D}).
\end{equation}
 By Lemma~\ref{L-Frenet}, Riemannian foliations of $M^3$ with Randers metric $F$ are characterized by condition
\[
 \bar k\bar N -c^{-1}\bar\nabla^\top c +c^{-2}\beta(\bar k\bar N-c^{-1}\bar\nabla^\top c)\beta^\sharp=0.
\]
 Put $N=N_1\bar N+N_2\bar B$ and $B=B_1\bar N+B_2\bar B$.
 Then the components $N_1,N_2$ can be extracted using Lemma~\ref{L-Frenet}.
For example, $c^2k N=\bar k\bar N+c^{-2}\bar k\beta(\bar N)\beta^\sharp$ when $c=\const$.
We can use \eqref{E-c-value1} to write down the system for $B$:
\begin{equation*}
  0=c^{-2}g(B,N) = \<B,N\> -\beta(B)\beta(N),\qquad
  c^{-2}=c^{-2}g(B,B) = \<B,B\> -\beta(B)^2 .
\end{equation*}
This yields the linear system for components $B_1$ and $B_2$,
from which, keeping in mind $\beta(N)=N_1\beta(\bar N)+N_2\beta(\bar B)$, we find
%\[
 $B_1 = \frac{\sqrt{1+c^2}\,(\beta(\bar B)\beta(N)-N_2)}{c\,(N_1\beta(\bar B)-N_2\beta(\bar N))}$
 and
 $B_2 = \frac{\sqrt{1+c^2}\,(\beta(\bar N)\beta(N)-N_1)}{c\,(N_2\beta(\bar N)-N_1\beta(\bar B))}$.
The curvature and torsion functions $k,\tau$ of $T$-curves and the second fundamental form and integrability tensor $h,{\cal T}$ of ${\cal D}$ for $g$ are related with such quantities $\bar k,\bar\tau,\bar h,\bar{\cal T}$ for $a$ by long formulas~\cite{rw3}.

 Here, we are looking for Euler-Lagrange equations for the following functional on the space of Randers metrics on $(M,{\cal D})$:
\begin{equation}\label{E-rwood-Rand}
 J^R_{\cal D}: (\alpha,\beta) \to -\int_M k^2(\tau - h_{B,N})\,{\rm d}V_g\,.
\end{equation}
(Certainly,  for $\beta=0$ \eqref{E-rwood-Rand} reduces itself to  \eqref{E-rwood}.)
So, let $F_t=\alpha_t+\beta_t\ (|t| < \eps)$ be a family of Randers metrics, and $\bar T_t$ unit $\alpha_t$-normals to ${\cal D}$.

\begin{prop}
$(i)$~Let $\beta_t=\beta$, i.e. $F_t=\alpha_t+\beta$. Then for $u,v\in{\cal D}$, we have
\begin{subequations}
\begin{eqnarray}
\label{E-dotg-uv}
\nonumber
 \dot g(u,v) \eq c^2\dot a(u,v) -(c\,\dot a(\bar T,\beta^\sharp)+\dot a(\beta^\sharp,\beta^\sharp))\big(\<u,v\> -\beta(u)\beta(v)\big)\\
 && -\,2\,c^2\big(\dot a(\beta^\sharp,u)\beta(v) +\dot a(\beta^\sharp,v)\beta(u)\big),\\
\label{E-dotg-nv}
 \dot g(T,v) \eq c\,\dot a(\bar T,v) -\dot a(\beta^\sharp,v)
 +\beta(v)\big(\frac32\,\dot a(\beta^\sharp,\beta^\sharp)-2c\,\dot a(\bar T,\beta^\sharp)
 -\frac12\,{c^2}\,\dot a(\bar T,\bar T)\big),\\
\label{E-dotg-nn}
 \dot g(T,T) \eq \dot a(\bar T,\bar T) -2c^{-1}\dot a(\bar T,\beta^\sharp) +c^{-2}\dot a(\beta^\sharp,\beta^\sharp).
\end{eqnarray}
\end{subequations}
 $(ii)$~Let $\alpha_t=\alpha$, i.e. $F_t=\alpha+\beta_t$, and $\beta_t^\sharp\in{\cal D}$. Then for $u,v\in{\cal D}$, we have
\begin{subequations}
\begin{eqnarray}\label{E-dotg-uv-beta}
 \dot g(u,v) \eq -2\<\dot\beta,\beta\>\big(\<u,v\> -\beta(u)\beta(v)\big)
  -c^2\big(\dot\beta(u)\beta(v)+\dot\beta(v)\beta(u)\big),\\
\label{E-dotg-nv-beta}
 \dot g(T,v) \eq \dot\beta(v) -c\beta(v)\dot\beta(\bar T),\\
\label{E-dotg-nn-beta}
 \dot g(T,T) \eq 2\,c^{-1}\dot\beta(\bar T) -2\,c^{-2}\<\dot\beta,\beta\>.
\end{eqnarray}
\end{subequations}
\end{prop}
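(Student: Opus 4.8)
The plan is to differentiate the closed form \eqref{E-c-value0} of the compatible metric $g=g_T$, reading it as the value of the fundamental tensor of the Randers metric $F_t=\alpha_t+\beta_t$ at the \emph{varying} unit $g_t$-normal $T_t$. Two things move: the data $(\alpha_t,\beta_t)$ -- hence $c_t=\sqrt{1-\alpha_t(\beta_t)^2}$, the $a_t$-unit $a_t$-normal $\bar T_t$, the $a_t$-sharp $\beta_t^{\sharp}$ of $\beta_t$, and $n_t=c_t^2T_t$ -- and the direction $T_t$ itself. Since $g_y$ is obtained by twice differentiating $F^2$ in the fibre and evaluating at $y$, the chain rule gives $\dot g = G' + 2\,C_T(\,\cdot\,,\cdot\,,\dot T\,)$, where $G'$ is the variation of \eqref{E-c-value0} with the direction $T$ held fixed and $C_T$ is the Cartan torsion at $y=T$; by $1$-homogeneity of $F$ one has, immediately from the definitions of $g_y$ and $C_y$, that $C_T(T,\cdot,\cdot)=0$, a fact used repeatedly below.

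First I would record the first variations of the auxiliary quantities at $t=0$, where $\beta^\sharp\in{\cal D}$, $\langle\bar T,\bar T\rangle=1$, $\beta(\bar T)=0$ and $b^2+c^2=1$. From $c_t^2=1-|\beta_t|^2_{a_t}$ one reads off $\dot c$ (in case (i) it comes from $\dot a(\beta^\sharp,\beta^\sharp)$, in case (ii) from $\langle\dot\beta,\beta\rangle$); differentiating $a_t(\bar T_t,\bar T_t)=1$ and $a_t(\bar T_t,X)=0$ for $X\in{\cal D}$ gives $\dot{\bar T}=-\tfrac12\dot a(\bar T,\bar T)\bar T-(\dot a(\bar T,\cdot)|_{\cal D})^\sharp$ when $\alpha$ varies and $\dot{\bar T}=0$ when $\alpha$ is fixed; and from $\beta_t^\sharp=(a_t)^{-1}\beta_t$ one gets $\dot{(\beta^\sharp)}=\dot\beta^\sharp-(\dot a(\beta^\sharp,\cdot))^\sharp$, which in case (i) has a nonzero $\bar T$-component $-\dot a(\beta^\sharp,\bar T)\,\bar T$ although $\beta^\sharp\in{\cal D}$ at $t=0$ -- this transverse drift is exactly what produces the $\dot a(\bar T,\beta^\sharp)$ terms in \eqref{E-dotg-uv}--\eqref{E-dotg-nn}. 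Substituting these, together with the on-the-nose values $n^\flat|_{\cal D}=-\beta|_{\cal D}$, $\langle n,\bar T\rangle=c$, $\beta(n)=-b^2$, $a(T,T)=c^{-4}$ (all from Lemma~\ref{L-Frenet}), into the $t$-derivative of \eqref{E-c-value0} yields $G'$ on the three types of argument pairs. (In case (ii) the standing assumption $\beta_t^\sharp\in{\cal D}$ moreover forces $\dot\beta(\bar T)=0$.)

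It remains to handle $2\,C_T(\cdot,\cdot,\dot T)$. Since $C_T(T,\cdot,\cdot)=0$, this term contributes nothing when one argument is $T$, so already $\dot g(T,v)=G'(T,v)$, giving \eqref{E-dotg-nv}, \eqref{E-dotg-nn}, \eqref{E-dotg-nv-beta} and \eqref{E-dotg-nn-beta} directly. For the ${\cal D}\times{\cal D}$ block the term enters only through the ${\cal D}$-component $\dot T^\top$ of $\dot T$, and $\dot T^\top$ is pinned down by differentiating $g_t(T_t,X)=0$: one gets $g(\dot T^\top,X)=-\dot g(T,X)=-G'(T,X)$ for $X\in{\cal D}$ (here too the $C$-term drops out), hence $\dot T^\top=-(G'(T,\cdot)|_{\cal D})^{\sharp}$ with respect to $g|_{\cal D}$. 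Feeding $\dot T^\top$ into the Randers Cartan tensor, which on ${\cal D}^{\otimes 3}$ has the C-reducible form $C_T(X,Y,Z)=\tfrac{c^2}{2}\big(g(X,Y)\beta(Z)+g(Y,Z)\beta(X)+g(Z,X)\beta(Y)\big)$, and converting the $g$-musical operations on ${\cal D}$ into $a$-musical ones via \eqref{E-c-value1} (i.e. $g|_{\cal D}=c^2(a-\beta\otimes\beta)$), one obtains the correction to $G'(u,v)$. Specializing ($\dot\beta=0$ for (i); $\dot a=0$, $\dot\beta^\sharp\in{\cal D}$ for (ii)) and simplifying with $b^2=1-c^2$ then produces \eqref{E-dotg-uv} and \eqref{E-dotg-uv-beta}.

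The hard part is entirely bookkeeping: assembling the many terms of the $t$-derivative of \eqref{E-c-value0}, and in case (i) correctly tracking the transverse-to-${\cal D}$ motion of $\beta_t^\sharp$ together with the discrepancy between the $a$- and $g$-inner products on ${\cal D}$ (a conformal factor $c^2$ and a rank-one term $\beta\otimes\beta$) -- this is where the recurring combination $\langle u,v\rangle-\beta(u)\beta(v)$ and the powers $c,\,c^{-1},\,c^{-2}$ in the stated formulas come from. Useful sanity checks along the way: all six displays collapse to $\dot g=\dot a$ when $\beta=0$ (then $c=1$, $T=\bar T$, $g=a$), and in case (i) one should find $\dot g(T,T)=c^{2}\,\dot a(T,T)$.
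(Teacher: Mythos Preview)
Your chain–rule split $\dot g = G' + 2\,C_T(\cdot,\cdot,\dot T)$ is correct in principle, but the way you describe computing $G'$ already produces the full $\dot g$, so the added Cartan term on the ${\cal D}\times{\cal D}$ block would double--count the motion of $T$.

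Here is the point. Formula \eqref{E-c-value0} is written in the variables $a,\beta,n$, and $n=c\bar T-\beta^{\sharp}$ is completely determined by $(a_t,\beta_t)$ and the fixed distribution ${\cal D}$; equivalently $T_t=c_t^{-2}n_t$. So when you differentiate \eqref{E-c-value0} after substituting the very variations you list --- $\dot c$, $\dot{\bar T}$, $(\beta^\sharp)'$ --- you are tracking $n_t=c_t\bar T_t-\beta_t^{\sharp}$, which already encodes the motion of the direction $T_t$. What you obtain is $\dot g$ itself, not the ``direction--frozen'' piece $G'$. On the $(T,\cdot)$ components this discrepancy is invisible because $C_T(T,\cdot,\cdot)=0$, but on $u,v\in{\cal D}$ adding $2\,C_T(u,v,\dot T)$ afterwards would give $\dot g(u,v)+2\,C_T(u,v,\dot T)$, not \eqref{E-dotg-uv} or \eqref{E-dotg-uv-beta}.

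The paper's proof avoids this entirely: it simply differentiates \eqref{E-c-value0} directly, with $\dot n$ read off from $\dot c,\dot{\bar T},(\beta^\sharp)'$, and obtains all six displayed formulas in one pass --- no Cartan torsion, no splitting. In effect, the passage through \eqref{E-c-value0} already packages the ``change of direction'' contribution for you. Your decomposition \emph{can} be salvaged, but then $G'$ must be computed from the general fundamental--tensor formula $g_y$ with the argument $y=T$ held fixed (so that $n$ is replaced by $T/\alpha_t(T)$, whose $t$--derivative is proportional to $n$, \emph{not} by $c_t\bar T_t-\beta_t^{\sharp}$); that is a different and more laborious calculation than the one you outline, and brings in the Randers $C_T$ you wrote down only to cancel it again.

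A side remark: since you assume $\beta_t^{\sharp}\in{\cal D}$ for all $t$ in case (ii), you are right that $\dot\beta(\bar T)=0$; the paper keeps those terms in \eqref{E-dotg-nv-beta}--\eqref{E-dotg-nn-beta} without simplifying them away.
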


\proof
(i)~Derivating  \eqref{E-c-value0} we obtain
\begin{eqnarray}\label{E-dotg-a}
\nonumber
 \dot g(u,v) \eq (1+\beta(n))\dot a(u,v) +\beta(\dot n)\big(\<u,v\>-\<n,u\>\<n,v\>\big)\\
\nonumber
 && -\,\beta(n)\big(\dot a(n,u)\<n,v\> +\dot a(n,v)\<n,u\> +\<\dot n,u\>\<n,v\>+\<\dot n,v\>\<n,u\>\big)\\
 && +\,\beta(u)\big(\dot a(n,v)+\<\dot n,v\>\big) +\beta(v)\big(\dot a(n,u)+\<\dot n,u\>\big).
\end{eqnarray}
After derivation of $a_t(\bar T_t,u)=0\ (u\in{\cal D})$ and $a_t(\bar T_t,\bar T_t)=1$
%at $t=0$
we find
%\[
 $\dot{\bar T}^\top=-\dot a(\bar T,\,\cdot)^\sharp$
 and
 $\dot{\bar T}^\bot=-(1/2)\,\dot a(\bar T,\bar T)\bar T$,
%\]
respectively. Hence,
\[
 \dot{\bar T} = -\dot a(\bar T,\,\cdot)^\sharp -(1/2)\,\dot a(\bar T,\bar T)\bar T.
\]
Since
%\[
 $\dot n=\dot c\,\bar T + c\,\dot{\bar T} -\dot a(\beta^\sharp,\,\cdot)^\sharp$
 and
 $\dot c=-\dot a(\beta^\sharp,\beta^\sharp)/(2\,c)$,
%\]
then
\begin{eqnarray*}
 && \beta(\dot n) = c\,\beta(\dot{\bar T}) -\dot a(\beta^\sharp,\beta^\sharp) = -c\,\dot a(\bar T,\beta^\sharp)-\dot a(\beta^\sharp,\beta^\sharp),\\
 &&
 %\dot\beta(n) = -\dot a(\beta^\sharp,\beta^\sharp),\quad
 \<\dot n,u\> = \dot c\<\bar T,u\> + c\<\dot{\bar T},u\> -\dot a(\beta^\sharp,u).
\end{eqnarray*}
In particular, from \eqref{E-dotg-a} with $u,v\in{\cal D}$, using $\<\bar T,u\>=0$, we obtain
\begin{eqnarray*}
  \dot g(u,v) \eq (1+\beta(n))\dot a(u,v) -(c\,\dot a(\bar T,\beta^\sharp)+\dot a(\beta^\sharp,\beta^\sharp))\big(\<u,v\>-\<n,u\>\<n,v\>\big)\\
 && +\,\beta(n)\big((c\,\dot a(\bar T,u)-\dot a(\beta^\sharp,u))\beta(v) +(c\,\dot a(\bar T,v)-\dot a(\beta^\sharp,v))\beta(u) \\
 && -(c\,\dot a(\bar T,u)+\dot a(\beta^\sharp,u))\beta(v) -(c\,\dot a(\bar T,v)+\dot a(\beta^\sharp,v))\beta(u)\big)\\
 && +\,\beta(u)\big( (c\,\dot a(\bar T,v)-\dot a(\beta^\sharp,v)) -(c\,\dot a(\bar T,v)+\dot a(\beta^\sharp,v))\big) \\
 && +\beta(v)((c\,\dot a(\bar T,u)-\dot a(\beta^\sharp,u)) -(c\,\dot a(\bar T,u)+\dot a(\beta^\sharp,u))\big),
\end{eqnarray*}
and then \eqref{E-dotg-uv}.
Similarly, from \eqref{E-dotg-a} with $u=n$ and $v\in{\cal D}$, using $\<\dot n,n\>=\frac12\,\dot a(\beta^\sharp,\beta^\sharp)-\frac32\,c^2\dot a(\bar T,\bar T)$, we obtain
\begin{eqnarray*}
 \dot g(n,v) \eq c^2[\dot a(n,v) +\beta(v)(\dot a(n,n) +\<\dot n,n\>)]
\end{eqnarray*}
and then \eqref{E-dotg-nv}.
Finally, from \eqref{E-dotg-a} with $u=n$ and $v=n$ we get $\dot g(n,n)=c^{2}\dot a(n,n)$, hence~\eqref{E-dotg-nn}.

(ii)~In this case, $(\dot\beta)^\sharp=(\beta^\sharp)'$ and $\bar T_t=\bar T$.
Since
%\[
 $\dot n=\dot c\,\bar T -\dot\beta^{\,\sharp}$
 and
 $\dot c= -c^{-1}\<\dot\beta,\beta\>$,
%\]
then
\[
 \beta(\dot n) = -\<\dot\beta,\beta\>,\quad
 \dot\beta(n) = c\,\dot\beta(\bar T) -\<\dot\beta,\beta\>,\quad
 \<n,u\> = -\beta(u),\quad \<\dot n,u\> = -\dot\beta(u).
\]
Derivating  \eqref{E-c-value0} in this case yields
\begin{eqnarray}\label{E-dotg-b}
\nonumber
 \dot g(u,v) \eq -2\<\dot\beta,\beta\>[\<u,v\> -\<n,u\>\<n,v\>]
 +\dot\beta(u)\beta(v)+\beta(u)\dot\beta(v)\\
\nonumber
 \plus (1-c^2)\big(\<\dot n,u\>\<n,v\> +\<\dot n,v\>\<n,u\>\big) \\
 \plus \dot\beta(u)\<n,v\> + \beta(u)\<\dot n,v\> +\beta(v)\<\dot n,u\> +\dot\beta(v)\<n,u\> .
\end{eqnarray}
For $u,v\in{\cal D}$ this reduces to \eqref{E-dotg-uv-beta}.
From \eqref{E-dotg-b} for $u=n$ and $v\in{\cal D}$, using
%\[
 $\<\dot n,n\> = -c\,\dot\beta(\bar T)$,
%\]
we find \eqref{E-dotg-nv-beta}.
From \eqref{E-dotg-b} for $u=n$ and $v=n$ we find $g(n,n) = 2\,c^{3}\dot\beta(\bar T) -2\,c^2\<\dot\beta,\beta\>$, hence \eqref{E-dotg-nn-beta}.
\qed

\smallskip

Let $a_t\ (|t| < \eps)$ be a 1-parameter family of metrics and $\beta_t\ (|t| < \eps)$ a 1-parameter family of 1-forms.
Then $\dot a$ has six independent components:
$\dot a(\bar T,\bar T),\dot a(\bar T,\bar N),\dot a(\bar T,\bar B),\dot a(\bar N,\bar N),\dot a(\bar N,\bar B)$ and $\dot a(\bar B,\bar B)$, and $\dot\beta^{\,\sharp}$ has three independent components: $\dot\beta(\bar T),\dot\beta(\bar N)$ and $\dot\beta(\bar B)$.
Let $D^T_T\,T\ne0$ on an open set $U$.
Denote by $Q_i\ (i=1,2,3)$ the LHS's of equations \eqref{E-EL-1}--\eqref{E-EL-3} calculated with respect to new metric $g=g_T$.
The following result can be viewed as extension of Theorem~\ref{T-main1}.

\begin{cor}\label{T-main2}
Euler-Lagrange equations for \eqref{E-rwood-Rand} with respect to all variations $(\alpha_t,\beta_t)$ of metric $a$
and 1-form $\beta$ such that $\beta^\sharp\in {\cal D}$ are trivial on $M\setminus U$ and are given on $U$ by
\begin{equation}\label{E-EL-Rand}
 Q_1=0,\quad Q_2=0,\quad Q_3=0,
\end{equation}
which reduce to \eqref{E-EL-1}--\eqref{E-EL-3} when $\beta=0$.
\end{cor}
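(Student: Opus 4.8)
The plan is to reduce this to Theorem~\ref{T-main1} via the chain rule. For each $t$, the metric $g_t:=g_{T_t}$ is the Riemannian metric \eqref{E-c-value0} associated to the Randers metric $F_t=\alpha_t+\beta_t$, so the $t$-derivative at $t=0$ of $J^R_{\cal D}(F_t)$ equals that of $J_{\cal D}(g_t)$. First I would read off from the proof of Theorem~\ref{T-main1} that, writing $S=\dot g$ in the $g$-Frenet frame $(T,N,B)$, only the components $S_{T,T},S_{T,N},S_{T,B}$ enter the first variation of $J_{\cal D}$ (the Case~3 variations there contribute nothing), and in fact
\[
 \frac{{\rm d}}{{\rm dt}}\,J_{\cal D}(g_t)_{\,|\,t=0}=-2\int_M\big(Q_1\,S_{T,T}+2k\,Q_2\,S_{T,N}+2k\,Q_3\,S_{T,B}\big)\,{\rm d}V_g,
\]
where $Q_1,Q_2,Q_3$ are exactly the left-hand sides of \eqref{E-EL-1}, \eqref{E-EL-2}, \eqref{E-EL-3} computed for the metric $g=g_T$.

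The core of the argument is to show that an \emph{admissible} Randers variation $(\alpha_t,\beta_t)$ — one keeping $\beta^\sharp_t\in{\cal D}$ — can realize an arbitrary, arbitrarily localized triple $(S_{T,T},S_{T,N},S_{T,B})$. For this I would use the preceding Proposition on the convenient $3$-parameter subfamily given by $\dot a=\lambda\,\bar T^\flat\!\otimes\bar T^\flat$ together with $\dot\beta\in{\cal D}$, i.e.\ $\dot\beta(\bar T)=0$. Both choices are admissible: the first because such $\dot a$ automatically satisfies the infinitesimal constraint $\dot a(\bar T,\beta^\sharp)=0$ (as $\beta^\sharp\in{\cal D}$ is $a$-orthogonal to $\bar T$), the second because then $\bar T_t\equiv\bar T$. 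Substituting into \eqref{E-dotg-nv}, \eqref{E-dotg-nn}, \eqref{E-dotg-nv-beta}, \eqref{E-dotg-nn-beta} turns the assignment $(\lambda,\dot\beta(\bar N),\dot\beta(\bar B))\mapsto(\dot g(T,T),\dot g(T,\bar N),\dot g(T,\bar B))$ into a linear map whose determinant I expect to equal $1-\beta(\bar N)^2-\beta(\bar B)^2=1-b^2=c^2>0$; composing with the invertible change from the $a$-orthonormal frame $(\bar N,\bar B)$ of ${\cal D}$ to the $g$-orthonormal frame $(N,B)$ of ${\cal D}$ (Lemma~\ref{L-Frenet}) then yields the wanted surjectivity onto $(S_{T,T},S_{T,N},S_{T,B})$, with arbitrarily small support.

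It then follows that the first variation of $J^R_{\cal D}$ vanishes for all admissible variations if and only if $Q_1=k\,Q_2=k\,Q_3=0$ on $M$; and since the integrand of $J^R_{\cal D}$ and each $Q_i$ are supported on $U=\{D^T_T\,T\ne0\}$, where $k\ne0$, this is precisely \eqref{E-EL-Rand} — that is, $Q_1=Q_2=Q_3=0$ on $U$, with the equations trivial on $M\setminus U$. Finally, for $\beta=0$ one has $c=1$, $g=g_T=a$ and $(\bar T,\bar N,\bar B)=(T,N,B)$, so the $Q_i$ reduce to the left-hand sides of \eqref{E-EL-1}--\eqref{E-EL-3}, while Randers variations with $\beta\equiv0$ already include all variations of $a$; hence \eqref{E-EL-Rand} collapses to \eqref{E-EL-1}--\eqref{E-EL-3}, recovering Theorem~\ref{T-main1}. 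The main obstacle I anticipate is purely computational: establishing the nondegeneracy identity (the value $c^2$) by unwinding Lemma~\ref{L-Frenet} and the Proposition, while carefully tracking which infinitesimal variations stay admissible under the constraint $\beta^\sharp_t\in{\cal D}$.
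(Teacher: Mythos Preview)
Your proposal is correct and follows essentially the same strategy as the paper: reduce $J^R_{\cal D}$ to $J_{\cal D}$ via the chain rule, invoke the proof of Theorem~\ref{T-main1} to see that only $S_{T,T},S_{T,N},S_{T,B}$ matter, and then use the preceding Proposition's formulas for $\dot g$ to show these three components can be prescribed by admissible Randers variations. The only organizational difference is that the paper splits into the cases ``$\beta$ fixed'' and ``$\alpha$ fixed'' and writes out the resulting linear relations among $Q_1,Q_2,Q_3$, whereas you select a single $3$-parameter subfamily ($\dot a=\lambda\,\bar T^\flat\!\otimes\bar T^\flat$ together with $\dot\beta\in{\cal D}$) and compute the Jacobian; your determinant value $1-\beta(\bar N)^2-\beta(\bar B)^2=c^2$ is correct, so the surjectivity is established cleanly.
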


\proof Let $\beta\ne0$ (otherwise we reduce all of that to Theorem~\ref{T-main1} for metric $a$).
Due to Theorem~\ref{T-main1}, only the derivatives $\dot g(T,N),\dot g(T,B)$ and $\dot g(T,T)$ are important for Euler-Lagrange equations; hence, we may and do assume $\dot g(N,N)=\dot g(N,B)=\dot g(B,B)=0$. By the results of
Section~\ref{sec:var-g-Riem},
\begin{eqnarray}\label{E-JRD}
 \frac{{\rm d}}{{\rm dt}}\,J^R_{{\cal D}}(a_t,\beta_t)_{\,|\,t=0} \eq \int_M \big\{
 Q_1\,\dot g(T,T) +Q_2\,\dot g(T,N) +Q_3\,\dot g(T,B) \big\}\,{\rm d}V_g .
\end{eqnarray}
%%%%%%%%%%%%%%%%%%%%%%%%
 If $\beta$ does not depend on $t$ then \eqref{E-JRD} and \eqref{E-dotg-nv}--\eqref{E-dotg-nn} yield
\begin{eqnarray*}
%\label{E-EL-R1-temp}
 2 c^{-2}Q_1 -\beta(N)Q_2 -\beta(B)Q_3 \eq 0,\\
%\label{E-EL-R2-temp}
 2c^{-2}\beta(\bar N)Q_1 -(N_1 -2\beta(N)\beta(\bar N))Q_2 -(B_1 -2\beta(B)\beta(\bar N))Q_3 \eq 0,\\
%\label{E-EL-R3-temp}
 2c^{-2}\beta(\bar B)Q_1 -(N_2 -2\beta(N)\beta(\bar B))Q_2 -(B_2 -2\beta(B)\beta(\bar B))Q_3 \eq 0.
\end{eqnarray*}
 Next, if $\alpha$ does not depend on $t$ then
\eqref{E-JRD} and \eqref{E-dotg-nv-beta}--\eqref{E-dotg-nn-beta} yield
\begin{eqnarray}
%\label{E-EL-R4}
% 2c^{-2} Q_1 -\beta(N) Q_2 -\beta(B) Q_3 \eq 0 ,\\
\label{E-EL-R5-temp}
 2c^{-2}\beta(\bar N) Q_1 - N_1 Q_2 - B_1 Q_3 = 0,\quad
%\label{E-EL-R6-temp}
 2c^{-2}\beta(\bar B) Q_1 - N_2 Q_2 - B_2 Q_3 = 0 .
\end{eqnarray}
Since  $\beta(\bar N)^2+\beta(\bar B)^2\ne0$,
we get \eqref{E-EL-Rand} directly from \eqref{E-EL-R5-temp}.
\qed

\smallskip

By Corollaries~\ref{C-geod} and \ref{T-main2}, a metric $g$ on $(M^3,{\cal D})$ with a geodesic vector field $T$
is critical for~$J^R_{\cal D}$.

\section{Final remarks}
%\label{sec:fin}

In the case of a codimension-one distribution ${\cal D}$ and a vector field $T$ transverse to ${\cal D}$
on a manifold $M$ of $\dim M > 3$, the form $\eta\wedge d\eta$ can be also defined as in Section~\ref{sec:1-1}
for the $\dim M = 3$ case but there is no reason
(different from integrability of ${\cal D}$) for $\eta\wedge d\eta$ to be closed.
However, using the Hodge decomposition theorem:
 $\Lambda^k (M) = {\cal H}^k(M) \oplus \im d\oplus\im\delta$,
(see, for example, \cite{war}),
${\cal H}^k (M)$ being the space of harmonic $k$-forms on $M$ and
$\delta$  being the formal adjoint of $d$, one can project our form $\eta\wedge
d\eta$ onto the space ${\cal H}^3(M)$ and define the Godbillon-Vey class of
$({\cal D},T)$ as the cohomology class determined by this projection.
In this case, however, differently from the case of $\dim M=3$ (Lemma~\ref{L-eta}),
the
obtained class $\gv ({\cal D},T)$ depends strongly on the choice of a metric $g\in{\rm Riem}(M,{\cal D},T)$.
Anyway, it seems to be interesting to investigate this general case more~closely.

Let $\dim M=2n+1\ge5$, and $\omega,T$ and $\eta = \iota_{\,T}\,d\omega$ be as above.
Following the ideas of \cite{fh}, we observe that the following  cohomolo\-gy classes
({\it Godbillon-Vey type invariants}) are well-defined:
\begin{equation*}
 \gv_s(\omega,T) = \int_M \eta\wedge\underset{s}{\underbrace{ d\eta \wedge \ldots \wedge d\eta}}
 \,\wedge\,\underset{n-s}{\underbrace{d\omega \wedge \ldots \wedge d\omega}},\qquad
 0\le s\le n.
\end{equation*}
 Let $\{N,Z_0=B,Z_1,\ldots,Z_{2n-2}\}$ be a local orthonormal basis of ${\cal D}$,
 and as before, $h$ its second fundamental form, and $k,\tau$ the curvature and torsion of $T$-curves.
Let ${\cal T}^Z$ be the integrability tensor of the distribution ${\cal D}_Z$ orthogonal to $\{T,N\}$.
Denote by $S^i_{2n-2}\ (0\le i\le 2n-1)$ the set of all transpositions ${\bm j}=\{j_1,j_2\ldots,j_{\,2n-2}\}$ of $2n-2$ elements $\{0,1,\ldots,2n-2\}\setminus\{i\}$.

\begin{prop}
 We have $\gv_0(\omega,T) = 0$ and for $s\ge1$
\begin{eqnarray*}
 && \gv_s(\omega,T) = (-2)^{n-1}\int_M \big\{k^{s+1}\sum\nolimits_{i=0}^{2n-2}\big[(h_{Z_i,N}-\tau\delta_{i0})\times \\
 && \times \sum\nolimits_{{\bm j}\in S^i_{2n-2}}
 \underset{s}{\underbrace{ g({\cal T}^Z_{j_1,j_2},N)\ldots g({\cal T}^Z_{j_{\,2s-1},j_{\,2s}},N)}}\cdot
 \underset{n-s}{\underbrace{{\cal T}_{j_{\,2s+1},j_{\,2s+2}}\ldots {\cal T}_{j_{\,2n-3},j_{\,2n-2}} }}\big]\big\}\,{\rm d}V_g.
\end{eqnarray*}
 If ${\cal D}_Z$ is integrable, then $\gv_s(\omega,T) = 0$ for all $s\ge1$.
\end{prop}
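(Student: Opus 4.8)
The plan is to evaluate the top-degree form $\Theta_s:=\eta\wedge (d\eta)^{s}\wedge(d\omega)^{n-s}$ on the chosen orthonormal frame $\{T,N,Z_0=B,Z_1,\dots,Z_{2n-2}\}$ and read off its coefficient against ${\rm d}V_g$. First I would assemble the pointwise data. Lemma~\ref{L-02} holds verbatim in every dimension, its proof using only $\eta(T)=0$ and the first Frenet formula, so $\eta=k\,N^\flat$; in particular $\Theta_s\equiv0$ where $k=0$. Next, exactly as in \eqref{E-dd}--\eqref{E-d-eta}, short computations with $\omega(T)=1$, the Frenet formulas \eqref{E-Frene} and the definitions of $h$, ${\cal T}$ and ${\cal T}^Z$ give
\begin{eqnarray*}
 d\omega(T,Z_i) &=& k\,g(Z_i,N)=0,\qquad d\omega(Z_i,Z_j)=-2\,{\cal T}_{Z_i,Z_j},\\
 d\eta(T,Z_i) &=& -k\,(h_{Z_i,N}-\tau\,\delta_{i0}),\qquad d\eta(Z_i,Z_j)=-2k\,g({\cal T}^Z_{Z_i,Z_j},N),
\end{eqnarray*}
where $d\omega(Z_i,Z_j)=-g([Z_i,Z_j],T)$, $d\eta(Z_i,Z_j)=-k\,g(N,[Z_i,Z_j])$, and $d\eta(T,Z_i)=-k\,g(N,[T,Z_i])$ with $g(N,[T,Z_i])=h_{Z_i,N}-\tau\delta_{i0}$ obtained from $g(N,\nabla_TZ_i)=-\tau\delta_{i0}$ and $g(N,\nabla_{Z_i}T)=-h_{Z_i,N}$; for $n=1$ this recovers \eqref{E-d-eta}$_2$.

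The vanishing $\gv_0(\omega,T)=0$ is then immediate: in $(\eta\wedge(d\omega)^{n})(T,N,Z_0,\dots,Z_{2n-2})$ the factor $\eta=kN^\flat$ can only occupy the $N$-slot, and in the surviving $(d\omega)^{n}(T,Z_0,\dots,Z_{2n-2})$ the vector $T$ is forced into some $d\omega$-pair, where $d\omega(T,\cdot)|_{\cal D}=0$; alternatively $\iota_T\big(\eta\wedge(d\omega)^{n}\big)=-n\,\eta\wedge\eta\wedge(d\omega)^{n-1}=0$ and $T$ is nowhere zero. For $s\ge1$ I would expand $\Theta_s$ on the frame in the same way. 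The factor $\eta=kN^\flat$ contributes the scalar $-k$ and removes the $N$-slot, leaving the $2n$-form $(d\eta)^{s}\wedge(d\omega)^{n-s}$ evaluated on $(T,Z_0,\dots,Z_{2n-2})$, which is the usual signed sum over partitions of these $2n$ vectors into $n$ ordered pairs, $s$ of them fed to the $d\eta$-slots and $n-s$ to the $d\omega$-slots. Because $d\omega(T,Z_i)=0$, a partition survives only when $T$ lies in a $d\eta$-pair, say with $Z_i$: that pair contributes $d\eta(T,Z_i)=-k(h_{Z_i,N}-\tau\delta_{i0})$ and consumes one $d\eta$-slot. The remaining $2n-2$ vectors $\{Z_j:j\ne i\}$ fill the surviving $s-1$ copies of $d\eta$, each producing $-2k\,g({\cal T}^Z_{\cdot,\cdot},N)$, and the $n-s$ copies of $d\omega$, each producing $-2\,{\cal T}_{\cdot,\cdot}$. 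The accumulated scalar prefactor $(-k)\,(-k)\,(-2k)^{s-1}(-2)^{n-s}=(-2)^{n-1}k^{s+1}$, together with the sum over $i\in\{0,\dots,2n-2\}$ and over the orderings ${\bm j}\in S^{i}_{2n-2}$ of the remaining indices — the permutation signs being absorbed by the antisymmetry of ${\cal T}$ and ${\cal T}^Z$ — reproduces the asserted formula after multiplying by ${\rm d}V_g$ and integrating; the overall sign and orientation convention are pinned down by comparison with \eqref{eq:rwood1} at $n=1$.

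Finally, if ${\cal D}_Z$ is integrable then $[Z_i,Z_j]\in{\cal D}_Z\subset{\cal D}=\ker\omega$ for all $i,j$, whence $g([Z_i,Z_j],T)=g([Z_i,Z_j],N)=0$, i.e.\ ${\cal T}_{Z_i,Z_j}=0$ and ${\cal T}^Z_{Z_i,Z_j}=0$; since for $s\ge1$ each summand of the formula carries $(s-1)+(n-s)=n-1\ge1$ such factors (here $2n+1\ge5$, so $n\ge2$), it vanishes, giving $\gv_s(\omega,T)=0$. I expect the main obstacle to be the sign and multiplicity bookkeeping in the multilinear expansion of the $2n$-form $(d\eta)^{s}\wedge(d\omega)^{n-s}$ and the precise reconciliation of that signed wedge expansion with the plain sum over $S^{i}_{2n-2}$ as written in the statement; the remaining steps — the frame values of $\eta$, $d\eta$, $d\omega$, in particular the identity $g(N,[T,Z_i])=h_{Z_i,N}-\tau\delta_{i0}$ — are routine.
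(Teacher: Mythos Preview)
Your approach is essentially the same as the paper's: compute the frame values $\eta(T)=0$, $\eta(N)=k$, $d\omega(T,Z_i)=0$, $d\omega(Z_i,Z_j)=-g(T,[Z_i,Z_j])$, $d\eta(Z_i,Z_j)=-k\,g(N,[Z_i,Z_j])$, and then expand the top-form on the frame. The paper's own proof is extremely terse---it records these values and says ``From this the claim for $s\ge1$ follows''---whereas you carry out the multilinear expansion explicitly, track the prefactor $(-2)^{n-1}k^{s+1}$, and supply the argument for the integrable-${\cal D}_Z$ clause (which the paper's proof omits entirely). One small caveat: your identity $g(N,\nabla_T Z_i)=-\tau\delta_{i0}$ is automatic only for $i=0$ and otherwise amounts to a choice of the orthonormal frame $\{Z_1,\dots,Z_{2n-2}\}$ (e.g.\ parallel along $T$-curves in the $N$-direction); the paper glosses over this as well, so it is not a divergence from the paper but a point both treatments leave implicit.
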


\begin{proof} Since $\eta(T)=0,\,\eta(N)=k$ and $d\omega(T,Z_i)=0$, see \eqref{E-dd}, we get $\gv_0(\omega,T) = 0$.
We have
%\[
 $d\omega(Z_i,Z_j)=  g(T, [Z_i, Z_j])$
 and
 %,\quad
 $d\eta(Z_i,Z_j)= g(N, [Z_i, Z_j])$.
%\]
From this the claim for $s\ge1$ follows.
\end{proof}

\end{document}